\documentclass[reqno,oneside]{amsart}

\usepackage[a4paper,text={14cm,21.7cm},centering]{geometry}

\usepackage[utf8]{inputenc}
\usepackage[T1]{fontenc}
\usepackage[svgnames,hyperref]{xcolor}
\usepackage{lmodern,amsmath,amssymb}

\DeclareFontFamily{U}{mathb}{\hyphenchar\font45}
\DeclareFontShape{U}{mathb}{m}{n}{
      <5> <6> <7> <8> <9> <10> gen * mathb
      <10.95> mathb10 <12> <14.4> <17.28> <20.74> <24.88> mathb12
      }{}
\DeclareSymbolFont{mathb}{U}{mathb}{m}{n}
\DeclareMathSymbol{\bigast}{1}{mathb}{"06}

\usepackage{tikz}
\usetikzlibrary{calc,positioning}
\usepackage{tikz-cd}
\tikzset{bullet/.style={circle,fill=black,minimum size=4.5pt,inner sep=0pt}}

\usepackage[shortlabels]{enumitem}
\setlist[1]{wide}
\setlist[enumerate]{label=\rm{(\arabic*)}}
\setlist[enumerate,2]{label=\rm({\it\roman*})}
\setlist[itemize]{label=\raisebox{0.25ex}{\tiny$\bullet$}}

\theoremstyle{plain}
 
\newtheorem{theorem}{Theorem}
\newtheorem{corollary}[theorem]{Corollary}
\newtheorem{proposition}[theorem]{Proposition}
\newtheorem{lemma}[theorem]{Lemma}
\newtheorem{fact}[theorem]{Fact}

\theoremstyle{definition}
 
\newtheorem{definition}[theorem]{Definition}

\newtheorem{example}[theorem]{Example}
\newtheorem{remark}[theorem]{Remark}
\newtheorem{setup}[theorem]{Set-Up}


\newcommand{\C}{\mathbf{C}}

\newcommand{\Q}{\mathbf{Q}}
\newcommand{\Z}{\mathbf{Z}}
\newcommand{\N}{\mathbf{N}}
\newcommand{\K}{\mathbf{k}}

\renewcommand{\P}{\mathbb{P}}
\renewcommand{\H}{\mathbb{H}}

\newcommand{\Cl}{\mathcal{C}}

\newcommand{\Ql}{\mathcal{Q}}



\DeclareMathOperator{\GL}{GL}


\DeclareMathOperator{\Aut}{Aut}

\DeclareMathOperator{\Min}{Min}
\DeclareMathOperator{\CAT}{CAT}
\DeclareMathOperator{\Isom}{Isom}

\DeclareMathOperator{\car}{char}

\DeclareMathOperator{\Fix}{Fix}
\DeclareMathOperator{\dist}{d}
\DeclareMathOperator{\axe}{ax}
\DeclareMathOperator{\diam}{diam}

\DeclareMathOperator{\Sat}{Sat}

\DeclareMathOperator{\Adj}{Adj}

\renewcommand{\le}{\leqslant}
\renewcommand{\leq}{\leqslant}
\renewcommand{\ge}{\geqslant}
\renewcommand{\geq}{\geqslant}
\newcommand{\eps}{{\varepsilon}}
\renewcommand{\epsilon}{{\varepsilon}}
\renewcommand{\phi}{\varphi}
\newcommand{\id}{\text{\rm id}}
\renewcommand{\setminus}{\smallsetminus}

\newcommand{\cone}{\mathring}
\newcommand{\action}{\curvearrowright}
\newcommand{\lX}{\ell_X}

\newcommand{\llb}{\langle\!\langle}
\newcommand{\rrb}{\rangle\!\rangle}

\newcommand{\mycolor}{Navy}
\usepackage[pdfauthor={S. Lamy \& A. Lonjou}, ocgcolorlinks, colorlinks, linktocpage, citecolor = \mycolor,
linkcolor = \mycolor, urlcolor = \mycolor]{hyperref}
\usepackage[all]{hypcap} 

\renewcommand{\l}{\ell} 

\title{Introduction to a small cancellation theorem }
\author{Stéphane Lamy}
\author{Anne Lonjou}
\date{\today}
\address{Institut de Mathématiques de Toulouse UMR 5219, Université de Toulouse, UPS F-31062 Toulouse Cedex 9, France}
\email{slamy@math.univ-toulouse.fr}
\address{Departement Mathematik und Informatik, Universität Basel, Spiegelgasse 1, 4051 Basel, Switzerland}
\email{anne.lonjou@unibas.ch}

\begin{document}

\begin{abstract}
This note is intended as an introduction to two previous works respectively by Dahmani, Guirardel, Osin, and by Cantat, Lamy.
We give two proofs of a Small Cancellation Theorem for groups acting on a simplicial tree.
We discuss the application to the group of plane polynomial automorphisms over any ground field. 
\end{abstract}

%
%

\maketitle


\section{Introduction}

The classical setting of Small Cancellation Theory \cite{LyndonSchupp} is the study of finitely presented groups $G= \langle S \mid R \rangle$.
Here $S = \{a_1, \dots, a_s\}$ is a finite set, and $R$ is a finite collection of cyclically reduced words called relators, stable under taking inverse and under cyclic permutations.
Then $G$ is the quotient of the free group $F_S$ over $S$ by the normal subgroup $\llb R \rrb$ generated by the relators.
The small cancellation condition asks for a uniform bound on overlaps between relators: there exists $\lambda > 0$ such that for any $r_1 \neq r_2$ in $R$, the length of a common prefix is at most $\lambda$ times the length of the relators.
When $\lambda$ is small such a condition ensures several algebraic properties of the quotient, the most basic one being that the quotient is non trivial, or equivalently that the normal subgroup $\llb R \rrb$ is a proper subgroup of $F_S$.

The proofs are geometrical and relies on proving a ``conservation of negative curvature'': starting from the action of the free group $F_S$ on its Cayley graph view as a metric tree, one associates to each relation a planar diagram whose regions correspond to elements in $R$.
The celebrated condition $C'(1/6)$ that asks for overlaps of at most $1/6$ of the length of the relators, is related to the fact that if each region in a planar diagram admits at least 6 (resp. 7) neighbors then the discrete Gauss--Bonnet formula ensures a curvature $\le 0$ (resp. $<0$), which in turns implies that the boundary perimeter grows at least linearly (resp. exponentially) with the number of regions.

Recall that a geodesic metric space is Gromov hyperbolic if all its geodesic triangles are $\delta$-thin for a uniform constant $\delta \ge 0$. 
Following the landmark paper \cite{Gromov}, the Small Cancellation Theory was first extended to the situation of hyperbolic groups \cite{Champetier}, \cite{Olshanskii}, \cite{Delzant}, which are finitely presented groups whose Cayley graph is Gromov hyperbolic.

Then simultaneously around 2010 two versions of a Small Cancellation Theorem appeared, with applications to distinct groups in mind:
\cite{DGO} with groups from topology, such as mapping class groups, and \cite{CL} with groups from algebraic geometry, such as Cremona groups.
For these applications one needs to consider group actions $G \action X$, where $X$ is a Gromov hyperbolic space that does not have to be a Cayley graph, $G$ is a group not necessarily finitely generated, and the action does not have to be proper.
Infinite stabilizers must be allowed, and the metric space $X$ does not have to be locally compact.
The statements are of the form: given such an action $G \action X$, if $g$ satisfies some variant of the small cancellation condition, then the normal subgroup generated by a sufficiently high power of $g$ is free and proper.

The precise small cancellation condition can take several forms, were one wants a condition as simple as possible to check in practice, and with strong algebraic consequences.
One such condition is the WPD (Weakly Proper Discontinuous) condition on a loxodromic isometry $g \in G$, which asks for some properness of the group action along the axis of $g$ (see \S\ref{sec:WPD} for the precise definition).
We can now give a precise statement:

\begin{theorem}[Small Cancellation Theorem]
\label{thm:main}
Let $G \action X$ be a group acting on a Gromov hyperbolic space.
Let $g \in G$ be a WPD loxodromic element.
Then for any $C >0$, there exists an integer $n_0$ such that for any $n \ge n_0$, the normal subgroup  $\llb g^n \rrb$ generated by $g^n$ in $G$ satisfies:
\begin{enumerate}
\item There is a collection $S$ of conjugates of $g^n$ such that  $\llb g^n \rrb$ is the free group over the elements in $S$.
\item Any element $h \neq \id$ in $\llb g^n \rrb$ is a loxodromic isometry with translation length $\ell(h) > C$.
\item In particular if $C > \ell(g)$ then $\llb g^n \rrb$ does not contain $g$, and so is a proper normal subgroup in $G$.
\end{enumerate}
\end{theorem}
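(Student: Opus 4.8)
The plan is to turn the WPD hypothesis into a quantitative geometric small cancellation condition on the family of translated axes, and then to run a single ping-pong / local-to-global argument controlling every element of the normal closure at once. First I would fix a quasi-axis $\Gamma$ for $g$, so that the translation length satisfies $\ell(g^n)=n\,\ell(g)\to\infty$ while $\Gamma$, hence the whole orbit of axes $\{h\Gamma:h\in G\}$, is the same for every power $n$. The role of the WPD hypothesis is to force a \emph{uniform} overlap bound: there is a constant $\Delta$, depending only on $g$ and on the hyperbolicity constant $\delta$ of $X$ but not on $n$, such that any two distinct translates $a\Gamma\neq b\Gamma$ fellow-travel along a set of diameter at most $\Delta$. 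Indeed a long overlap would produce many elements moving two far-apart points of $\Gamma$ by a bounded amount, contradicting the finiteness built into WPD; one also uses here that $\Stab(\Gamma)$ is virtually cyclic and contains $g$. Since the axis of a conjugate $h g^n h^{-1}$ is precisely the translate $h\Gamma$, the conjugates of $g^n$ form a small cancellation family with pieces of length $\le\Delta$ and relators of length $n\,\ell(g)$, and for $n$ large the ratio $\Delta/(n\,\ell(g))$ is as small as we please.

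Next I would analyse an arbitrary reduced word $w=c_1^{k_1}\cdots c_m^{k_m}$, where the $c_i$ are conjugates of $g^n$ with $c_i\neq c_{i+1}$ and each $k_i\neq 0$. Fixing a basepoint $x_0\in\Gamma$, the syllable $c_i^{k_i}$ translates $x_0$ by roughly $|k_i|\,n\,\ell(g)$ along the axis of $c_i$, and concatenating the corresponding geodesic segments yields a broken path. Because consecutive axes overlap by at most $\Delta$ while each segment has length at least $n\,\ell(g)\gg\Delta$, every corner makes a definite backtracking-free turn, so the broken path is a local quasi-geodesic; by the stability of quasi-geodesics in the Gromov hyperbolic space $X$, for $n$ large it is a global quasi-geodesic. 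Hence $d(x_0,w\,x_0)\ge m\,(n\,\ell(g)-2\Delta)-O(\delta)>0$, so no nontrivial reduced word acts as the identity. This is the ping-pong criterion for freeness; arranging the relevant conjugates into a generating set $S$—conveniently via a \emph{windmill}-type induction that realises $\llb g^n\rrb$ as an increasing union of free products—identifies $\llb g^n\rrb$ with the free product $\bigast_{c\in S}\langle c\rangle$ of infinite cyclic groups, that is, the free group on $S$, proving~(1). The same estimate applied to a single element shows $\ell(w)\ge n\,\ell(g)-O(\Delta+\delta)$, which exceeds $C$ once $n\ge n_0$, proving~(2).

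Statement~(3) is then immediate: if $C>\ell(g)$, every $h\neq\id$ in $\llb g^n\rrb$ has $\ell(h)>C>\ell(g)$ by~(2), so $g$, whose translation length equals $\ell(g)$, cannot belong to $\llb g^n\rrb$; as $g\in G$ this gives $\llb g^n\rrb\subsetneq G$. The main obstacle is the second step: extracting the uniform bound $\Delta$ from the a priori weak WPD condition, and then making the local-to-global estimate quantitative enough that a single threshold $n_0$ serves simultaneously for all reduced words and yields $\ell(w)>C$. In the simplicial tree case $\delta=0$ both difficulties largely evaporate—fellow-travelling becomes honest overlap of segments and local geodesics are automatically global—which is presumably why it is natural to present the pedagogical proofs there first.
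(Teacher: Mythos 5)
Your skeleton---WPD forces a uniform overlap bound $\Delta$ on axes of conjugates, independent of $n$, so that for $n\ell(g)\gg\Delta$ the conjugates of $g^n$ form a small cancellation family---is exactly Proposition~\ref{pro:WPD=>SC}, and your two mechanisms correspond to the paper's two proofs of the tree case: the broken-path/quasi-geodesic estimate is the route of \cite{CL} (\S\ref{sec:CL}), the ``windmill-type induction'' is the route of \cite{DGO} (\S\ref{sec:DGO}); the paper itself does not reprove Theorem~\ref{thm:main} in the hyperbolic setting, only its tree analogue Theorem~\ref{thm:SCT}. The first genuine gap is part (1): it is named, not proved. The set $S$ cannot be the set of all conjugates of $g^n$, since for conjugates $c,c'$ the element $c'c(c')^{-1}$ is again a conjugate and satisfies a length-three relation with $c,c'$; freeness therefore requires a careful choice of a proper subfamily that still normally generates, and producing it is precisely what the cone-off, the windmill axioms \ref{WM_1}--\ref{WM_length}, the tree $T_W$, and Bass--Serre theory accomplish in \S\ref{sec:DGO}. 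The paper explicitly warns that the quasi-geodesic route ``does not seem to be able to give easily the free group structure''. Relatedly, your displacement estimate applies to words $c_1^{k_1}\cdots c_m^{k_m}$ that are already reduced over $S$, but an arbitrary $h\in\llb g^n\rrb$ comes only as an unstructured product of conjugates of $g^{\pm n}$; writing it as a reduced word over $S$ presupposes (1). So either one proves (1) first by the windmill induction and then estimates reduced words, or one handles arbitrary products directly, which is what admissible presentations and Greendlinger's Lemma (Lemmas \ref{lem:presentation}, \ref{lem:mini}, \ref{lem:infernal}) are for; your sketch uses the output of the first while running the second. Note also that WPD alone does not prevent two \emph{distinct} conjugates from sharing an axis (Example~\ref{ex:WPDnotTight}), which would destroy your corner condition; one must first pass to a tight power, as in Proposition~\ref{pro:WPD=>SC}(1).

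The second gap is in your deduction of (2). You fix $x_0\in\Gamma$ once and for all, bound the displacement $\dist(x_0,wx_0)$ from below, and conclude $\ell(w)>C$. But translation length is an infimum over all points: a lower bound on the displacement of one point, possibly far from the axis of $w$, gives no lower bound on $\ell(w)$ (a loxodromic isometry of tiny translation length moves a distant point a lot). This is exactly why the paper's proof of Theorem~\ref{thm:SCT}\ref{SCT2} chooses $x_0\in\axe(h)$ \emph{before} invoking Greendlinger's Lemma, and why windmill axiom \ref{WM_length} tracks translation lengths rather than displacements, with Lemma~\ref{lemma_W4} serving as the bridge. The gap is repairable in standard ways---cyclically reduce $w$ and use the stable length $\ell(w)=\lim_{k}\dist(x_0,w^kx_0)/k$, or rerun your estimate from a point of $\Min(w)$---but as written the step fails, and (3) inherits the problem.
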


This statement was used by the second author to get a proof of the non-simplicity of the Cremona group over any field \cite{Lonjou}.
The proof of Theorem \ref{thm:main} is quite technical in both above mentioned papers.
In \cite[Theorem 5.3 \&  Proposition 6.34]{DGO}, the proof relies on the techniques of rotating families and cone-off of a hyperbolic space.
In \cite[Theorem 2.10]{CL} the starting condition is the notion of tight isometry, which is similar to the WPD condition, and the proof relies on approximations by trees.

The archetypal example of a Gromov hyperbolic space being a tree, one can expect that a proof of Theorem \ref{thm:main} in the case of a group acting on a tree will be technically simpler while still retaining most of the flavor of the general case.
Our aim in this introductory note is to give two full proofs of a version of Theorem \ref{thm:main}, for groups acting on simplicial trees:

\begin{theorem}[Small Cancellation Theorem for a simplicial tree]
\label{thm:SCT}
Let $G \action X$ be a group acting on a simplicial tree, and $g \in G$ a loxodromic WPD element.
Then for any sufficiently large $n \in \N$, the normal subgroup  $\llb g^n \rrb$ generated by $g^n$ in $G$ satisfies:
\begin{enumerate}
\item\label{SCT1} 
There is a collection $S$ of conjugates of $g^n$ such that  $\llb g^n \rrb$ is the free group over the elements in $S$.
\item\label{SCT2}  
Any element $h \neq \id$ in $\llb g^n \rrb$ either is conjugate to $g^n$, or is a loxodromic isometry with translation length $\ell(h) > \ell(g^n)$.
\item\label{SCT3} 
In particular $\llb g^n \rrb$ does not contain $g$, and is a proper normal subgroup in $G$.
\end{enumerate}
\end{theorem}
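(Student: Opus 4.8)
Write $A = \axe(g)$ for the axis of $g$ in the tree $X$ and $L = \ell(g)\in\N$, so that $\ell(g^n)=nL$ and, for every $f\in G$, the conjugate $fg^nf^{-1}$ is again loxodromic, with axis $fA$ and translation length $nL$. The plan is to run a ping-pong argument on the tree with the family of translated axes $\{fA : f\in G\}$, which are precisely the axes of the conjugates of $g^n$ generating $N:=\llb g^n\rrb$. The whole argument rests on a single small-cancellation input extracted from the WPD hypothesis, after which everything reduces to a soft fellow-traveling estimate; I therefore expect that input to be the main obstacle.

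\textbf{Step 1 (the overlap bound, the main obstacle).} The key claim is that there is a constant $\Delta\ge 0$, depending only on $g$, such that any two distinct translates of $A$ meet in a segment of length at most $\Delta$, i.e. $\diam(A\cap fA)\le\Delta$ whenever $fA\neq A$. I would argue by contradiction: if overlaps were unbounded, pick $f_m$ with $f_mA\neq A$ and $\diam(A\cap f_mA)\to\infty$, and after sliding by a power of $g$ (which preserves $A$) assume all overlaps contain a fixed segment and grow in both directions. On the long common segment both $g$ and $f_mgf_m^{-1}$ act as translations by $L$; choosing the sign $\varepsilon_m=\pm1$ so that the directions agree, the element $w_m=g^{-1}f_mg^{\varepsilon_m}f_m^{-1}$ fixes a long subsegment of $A$ pointwise. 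By WPD the pointwise stabilizer of a sufficiently long segment is finite, so the $w_m$ take only finitely many values; but $w_m=w_{m'}$ forces $f_{m'}^{-1}f_m$ to commute with $g$, hence $f_mA=f_{m'}A$, so that $\diam(A\cap f_mA)$ is constant along that subsequence, contradicting $\diam(A\cap f_mA)\to\infty$. The only care needed is the bookkeeping of the direction $\varepsilon_m$ and the translation of WPD into ``finite pointwise stabilizer of a long segment'', which on a tree is immediate since an isometry fixing two points fixes the geodesic between them.

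\textbf{Step 2 (ping-pong and the length dichotomy).} Now fix $n$ large enough that $nL>6\Delta$. I would establish a combination lemma for trees: if $s_1,\dots,s_r$ are loxodromics whose axes pairwise overlap in at most $\Delta$ and whose translation lengths exceed $6\Delta$, then any word that is reduced in the free sense and whose consecutive letters lie on distinct axes is loxodromic, with translation length at least $nL$, and with equality only for a single letter $s^{\pm1}$. The proof is the standard estimate: tracking the image of a basepoint, each letter drags it a distance $\ge nL$ along its axis while the backtracking at each junction is bounded by the overlap $\le\Delta\ll nL$, so the contributions add and no cancellation occurs. Applied to the conjugates of $g^n$, this yields at once that they satisfy no relation (freeness) and that every nontrivial reduced word $h$ has $\ell(h)>\ell(g^n)$ unless $h$ is conjugate to a single generator, i.e. to $g^{\pm n}$, in which case $\ell(h)=\ell(g^n)$.

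\textbf{Step 3 (assembling (1)--(3)).} For \eqref{SCT1} I take $S$ to consist of conjugates $fg^nf^{-1}$, one for each translate axis, indexed by a transversal of $G/E$ where $E=\Stab(A)$ is the axis stabilizer (virtually cyclic by WPD). Step 2 shows $\langle S\rangle$ is free on $S$; that $\langle S\rangle=N$ follows because every conjugate of $g^n$ lies on some translate axis, and the residual discrepancy between such a conjugate and the chosen generator of its axis is an elliptic element of $N$, hence trivial once $N$ acts freely. This freeness of the action is exactly \eqref{SCT2}: by Step 2 every nontrivial element of $N$ is loxodromic, either conjugate to $g^{\pm n}$ (translation length $\ell(g^n)$) or of translation length $>\ell(g^n)$. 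Finally \eqref{SCT3} is immediate, since $g$ is loxodromic with $\ell(g)=L<nL$ and is not conjugate to $g^{\pm n}$, so by \eqref{SCT2} it cannot lie in $N$, whence $N$ is proper. Beyond Step 1, the one point needing genuine care is controlling possible torsion in $E=\Stab(A)$ when proving $\langle S\rangle=N$ and the freeness of the action; I would dispose of it by taking $n$ large, so that $\langle g^n\rangle$ dominates the finite kernel $\Fix(A)$ and WPD forbids nontrivial elliptic elements in $N$.
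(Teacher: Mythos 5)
Your Step 1 is essentially correct, and it is in substance the paper's Proposition \ref{pro:WPD=>SC}; but note that it only bounds $\diam(A\cap fA)$ when $fA\neq A$, and says nothing about isometries with $fA=A$. That is exactly where your proof breaks down. The normal subgroup $N=\llb g^n\rrb$ is generated by \emph{all} conjugates of $g^n$, and two conjugates $f g^n f^{-1}\neq f' g^{\pm n} f'^{-1}$ may share the same axis with the same translation direction; their discrepancy is then a nontrivial \emph{elliptic} element of $N$, fixing that axis pointwise. Your Step 2 lemma excludes precisely this configuration (``consecutive letters on distinct axes''), so it cannot rule it out, and your Step 3 is circular: you dismiss the discrepancy because ``$N$ acts freely'', but freeness is conclusion \eqref{SCT2}, which you derive from Step 2, which only covers words in the transversal $S$ --- and $N=\langle S\rangle$ is exactly what the discrepancy threatens. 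Worse, the assertion ``taking $n$ large, \dots WPD forbids nontrivial elliptic elements in $N$'' is false. The paper's own Example \ref{ex:WPDnotTight} is a counterexample: there $g=(x^2-y,x)$ is WPD, $f=(jx,j^2y)$ has order $3$, and $g\circ f=f^2\circ g$ gives $fgf^{-1}=gf$, hence $fg^nf^{-1}=g^nf$ for every \emph{odd} $n$; so $(fg^nf^{-1})\,g^{-n}=g^nfg^{-n}$ is a nontrivial elliptic element of order $3$ lying in $\llb g^n\rrb$ for every odd $n$, however large. Thus the dichotomy \eqref{SCT2} genuinely fails unless $n$ is chosen suitably, and no ping-pong estimate on translation lengths can repair this: the obstruction is algebraic, not metric.

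The missing ingredient is the first half of the paper's Proposition \ref{pro:WPD=>SC}: WPD implies that some power $g^M$ is \emph{tight} (Definition \ref{def:tight}), i.e.\ $\phi(\axe(g))=\axe(g)$ forces $\phi g^M\phi^{-1}=g^{\pm M}$. This is proved by a pigeonhole argument inside the finite set $\Fix\{x,g^px\}$ (the elements $\phi g^{-i}\phi^{-1}g^{i}$ fix longer and longer segments, hence two of them coincide), and one then takes $n$ a large \emph{multiple} of $M$ --- this is why the proof in \S\ref{sec:CL} opens with ``up to passing to some power we can assume that $g$ is tight''. Only after this reduction is the subgroup attached to an axis well defined, is every conjugate of $g^n$ with axis $fA$ equal to $(fg^nf^{-1})^{\pm 1}$, and hence $N=\langle S\rangle$ with the elliptic discrepancies killed. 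A secondary point: your Step 2, which you call ``the standard estimate'', is in fact where the bulk of both of the paper's proofs lives --- the windmill induction (Lemmas \ref{lem:distWhW} and \ref{lemma_W4}) in \S\ref{sec:DGO}, or admissible presentations together with Greendlinger's Lemma \ref{lem:infernal} in \S\ref{sec:CL}; in particular, rewriting an arbitrary product of conjugates of $g^{\pm n}$ into a word to which a junction-by-junction estimate applies is not automatic. Ironically, the step you flagged as the main obstacle (Step 1) is the one that is fine; the fatal gap is the same-axis rigidity you never establish.
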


In \S\ref{sec:DGO} we give a proof of Theorem \ref{thm:SCT} along the lines of \cite{DGO}, and in \S\ref{sec:CL} we give another proof following \cite{CL}.
Then we compare the notion of small cancellation, WPD and tight, and following \cite{MO} we illustrate the use of Theorem \ref{thm:SCT} by considering the action of the plane polynomial automorphism group on its Bass-Serre tree.
This is a natural subgroup of the Cremona group where we have an action on a non locally finite simplicial tree.

Observe that with the application to the Cremona group in mind, it would be sufficient to prove Theorem \ref{thm:main} when $X$ is a $\CAT(-1)$ space (or in fact simply the hyperbolic space $\H^n$, with $n = \infty$).
However it is unclear to us whether either proof can be much simplified with this extra assumption.
Another remark is that the proof in \cite{CL} does not seem to be able to give easily the free group structure, or other consequences such as the SQ-universality.

For most of the definitions and statement of the paper we indicate a reference, usually either to \cite{DGO} or to \cite{CL}.
These should be understood as ``compare our special definition in the case of a tree with the general definition of the cited paper''. 
For the application to polynomial automorphisms we will recall some results from \cite{MO}.
The proof in section~\ref{sec:CL} is a reworking of a proof that was included in the Habilitation thesis of the first author.

\section{Preliminaries}

\subsection{Trees and Bass-Serre theory}

A \emph{geodesic segment} between two points $x,y$ of a metric space $X$ is a map $\gamma\colon [a,b] \to X$ from a real interval, such that $\gamma(a) = x$, $\gamma(b) = y$, and $\gamma$ is an isometry onto its image.
We will denote $[x,y]$ such a segment, committing two abuses of notation: we identify the map with its image, and the geodesic segment is not unique in general.

A metric space $X$ is \emph{geodesic} if there exists at least one geodesic segment between each pair of points of $X$.    
A \emph{triangle} in a geodesic metric space is a choice of three points $x,y,z$, and a choice of three geodesic segments $[x,y]$, $[y,z]$, $[z,x]$  between them.
A \emph{tripod} in a geodesic metric space is the union of three geodesic segments whose pairwise intersections are reduced to a common endpoint $p$.
The point $p$ is called the \emph{branch point} of the tripod.
A \emph{real tree} is a geodesic metric space where all triangles are tripods.

Given an isometry $h$ of a metric space $X$, we define the \emph{translation length} of $h$ as the infimum $\l(h) = \inf_{x \in X} \dist(x, hx)$. 
We denote the set of points realizing the translation length by $\Min(h)$.
In the case where $X$ is a tree, we recall the following property: for any isometry $h$, and any point $x \in X$, the middle point of the segment $[x,hx]$ belongs to $\Min(h)$.
An isometry $h$ of a real tree $X$ is \emph{elliptic} if it admits at least one fixed point: in this case $\Min(h)$ is the subtree of fixed points of $h$. 
If an isometry of a real tree is not elliptic then it has to be \emph{loxodromic}: the translation length $\l(h)$ is positive, and the set of points realizing the infimum is a geodesic line $\axe(h) = \Min(h)$ called the axis of $h$.

A \emph{simplicial tree} is the topological realization of a combinatorial tree, where we always assume it is endowed with the metric such that each edge is isometric to the segment $[0,1]$.
We will need the following basic fact from Bass-Serre theory, which describes the structure of groups acting on simplicial trees.

\begin{proposition}\label{pro:bass serre}
Let $G \action X$ be a group acting on a simplicial tree, with trivial stabilizers of edges.
Denote by $G_v$ the stabilizer of a vertex $v$.
Assume that the quotient $X/G$ is a tree.
Then there exists a subtree $X' \subset X$ which is a fundamental domain, and $G$ is the free product of the $G_v$, where $v$ runs over the vertices of $X'$.
\end{proposition}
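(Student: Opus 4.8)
The plan is to produce an explicit fundamental domain, then prove generation, and finally freeness, the last point being the heart of the matter. First I would construct a subtree $X' \subset X$ that maps isomorphically onto the quotient tree $X/G$ under the projection $p\colon X \to X/G$. Since $X/G$ is a tree, this can be carried out by a ``root and lift'' induction: fix a base vertex of $X/G$ and one lift of it, and whenever a vertex $\bar u$ at distance $n$ from the root has been lifted to $u \in X'$, lift each edge of $X/G$ joining $\bar u$ to a vertex at distance $n+1$ by choosing one edge of $X$ issuing from $u$ in the corresponding $G$-orbit (such an edge exists because the preimages of $\bar u$ form a single orbit). The tree structure of $X/G$ guarantees that every vertex has a unique predecessor, so no lift is ever made twice and $p|_{X'}$ is a bijection onto $X/G$ on vertices and on edges; in particular $X'$ meets each $G$-orbit of vertices and of edges exactly once. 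Because edge stabilizers are trivial, this yields the key combinatorial fact that the map $G \times E(X') \to E(X)$, $(g,e)\mapsto ge$, is a bijection: the translates $gX'$ tile $X$, overlapping at most along vertices.

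Next I would show that $H := \langle G_v : v \in X' \rangle$ equals $G$. Consider the $H$-invariant subcomplex $Y = \bigcup_{h \in H} hX'$. If $Y \neq X$, connectedness of the tree produces an edge $e \notin Y$ incident to a vertex $u = hv \in Y$, with $h \in H$ and $v$ a vertex of $X'$. Then $h^{-1}e$ is an edge at $v$; since $p|_{X'}$ is surjective there is an edge $e_0$ of $X'$ at $v$ with $p(e_0) = p(h^{-1}e)$, so $h^{-1}e = g e_0$ for some $g \in G$, and $e_0$, $h^{-1}e$ share the endpoint $v$. As $X'$ meets each orbit once and $X$ is a tree, the only consistent possibility is $gv = v$, i.e. $g \in G_v \le H$; but then $e = hg\,e_0 \in Y$, a contradiction. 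Hence $Y = X$, and a short orbit argument (if $gv_0 = hv'$ with $v_0,v'\in X'$ then $v' = v_0$ and $h^{-1}g \in G_{v_0}$) gives $G = H$.

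The main point is freeness. By the universal property there is a surjection $\Phi\colon \ast_{v \in X'} G_v \to G$, and I must prove it injective, i.e. that a reduced word $w = g_1 \cdots g_k$ with $g_i \in G_{v_i} \setminus \{\id\}$ and $v_i \neq v_{i+1}$ is nontrivial in $G$ (the case $k=1$ being trivial). The idea is to realize $w$ by an explicit edge-path and show it is geodesic. Writing $d_i = [v_i, v_{i+1}] \subset X'$, the concatenation $\gamma = \bigcup_{i=1}^{k-1} (g_1 \cdots g_i)\, d_i$ joins $v_1$ to $w v_k$, its $i$-th piece lying in the tile $(g_1\cdots g_i)X'$. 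The crucial lemma is that $\gamma$ does not backtrack at a junction vertex $(g_1\cdots g_i)v_{i+1}$: the incoming and outgoing edges agree only if $g_{i+1}$ carries the $X'$-edge pointing to $v_{i+2}$ onto the $X'$-edge pointing to $v_i$; applying $p$ and using injectivity of $p|_{X'}$ forces these two edges of $X'$ to coincide, whence $g_{i+1}$ fixes an edge and is trivial, contradicting $g_{i+1}\neq\id$.

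Consequently $\gamma$ is the geodesic $[v_1, wv_k]$ and has positive length. To finish, note that its first edge $g_1\vec e$, where $\vec e$ is the initial edge of $d_1$, does not lie in $X'$: by the tiling bijection, $g_1\vec e \in E(X')$ would force $g_1 \in \Stab(\vec e) = \{\id\}$. Were $w = \id$, the geodesic $[v_1, wv_k] = [v_1, v_k]$ would lie inside the convex subtree $X'$, contradicting that its first edge is outside $X'$. Hence $w \neq \id$, so $\Phi$ is injective and $G = \ast_{v\in X'} G_v$. The only genuinely delicate step is the no-backtracking lemma, where the triviality of edge stabilizers and the tree structure of $X/G$ are both indispensable; the construction of $X'$ and the generation argument are routine bookkeeping by comparison.
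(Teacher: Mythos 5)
Your proof is correct, but it takes a genuinely different route from the paper: the paper's proof of this proposition contains no argument at all, consisting of two citations to Serre's book (Proposition 17, p.~32, for the existence of the subtree $X'$ lifting the quotient tree, and Theorem 10, p.~39, for the free product decomposition), whereas you reprove both statements from scratch. Your root-and-lift construction of $X'$ is essentially Serre's proof of the lifting result, and your generation-plus-freeness argument is a direct, elementary proof of exactly the special case of Serre's structure theorem needed here (trivial edge stabilizers, quotient a tree). Your key no-backtracking lemma --- the path $\gamma$ realizing a reduced word cannot backtrack at a junction $(g_1\cdots g_i)v_{i+1}$, since backtracking would force the nontrivial element $g_{i+1}$ to stabilize an edge of $X'$ --- is precisely where both hypotheses enter, and the tiling bijection $G \times E(X') \to E(X)$ cleanly packages the role of trivial edge stabilizers; this replaces the general Bass--Serre machinery by a geodesic argument in the tree. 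What the citation buys is brevity; what your argument buys is self-containedness, which arguably fits the expository aim of the paper better, since the same proposition is later applied to the bicolored tree $T_W$. One small point of care, which your write-up shares with the paper's statement: one should assume, as Serre does throughout, that the action is without inversions, or equivalently read ``trivial stabilizers of edges'' as trivial setwise stabilizers; otherwise an element inverting an edge would break the tiling bijection. (In the paper's application this is automatic, as $T_W$ is bipartite and the action preserves the two colors.) With that standing convention, every step of your proof --- the bijectivity of $p|_{X'}$, the tiling bijection, the connectedness argument for generation, and the reduced-path argument for freeness --- is sound.
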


\begin{proof}
The existence of the fundamental domain $X'$ is \cite[Proposition 17 p. 32]{Serre}.
Then the free product structure follows from \cite[Theorem 10 p. 39]{Serre}.
\end{proof}

\subsection{Small cancellation conditions}
\label{sec:WPD}

We shall use the following definition in the context of a loxodromic isometry of a tree, but it is not harder to state it for a general metric space.

\begin{definition}[WPD isometry, {\cite[Definition 6.1]{DGO}, \cite{BF}}]
\label{def:WPD}
Let $G \action X$ be a group acting on a metric space. 
An element $g \in G$ satisfies the WPD condition (or is a \emph{WPD element}) if for some (hence for every) $x \in X$, and for every $\eps \ge 0$, there exists $N = N(\eps)$ such that the set
\[
\Fix_\eps \{x,g^N x\} = \left\lbrace h \in G \mid \dist(x,hx) \le \eps \text{ and } \dist(g^N x,hg^Nx) \le \eps \right\rbrace
\]
is finite.
\end{definition}

The fact that we can equivalently put ``for some'' or ``for every'' in the definition is an exercise, see also \cite[Lemma 1.1]{Lonjou}.
Observe that $\Fix_0 \{x,g^N x\}$ is a subgroup, but in general for $\eps > 0$ the set $\Fix_\eps \{x,g^N x\}$ is only stable under taking inverse, not under composition.
In the sequel we will write simply $\Fix$ instead of $\Fix_0$. 

\begin{definition}[Tight isometry, {\cite[\S 2.3.3]{CL}}]
\label{def:tight}
Let $G \action X$ be a group acting on a tree.
A loxodromic isometry $g \in G$ is \emph{tight} if there exists $B > 0$ such that for all $\varphi\in G$, the condition ${\rm diam}(\axe(g) \cap \varphi(\axe(g)) > B$ implies  $\varphi g \varphi^{-1}= g$ or $ g^{-1}$ (so that in particular $\varphi(\axe(g)) = \axe(g)$).
\end{definition}

Observe that if $g$ is tight, then any iterate $g^n$, $n > 0$, is also tight.

\begin{definition}[Small cancellation, {\cite[Definition 6.22]{DGO}}]
\label{def:SC}
Let $G \action X$ be a group acting on a tree. 
Let $g \in G$ be a loxodromic isometry with translation length $\ell(g)$, and $g^G$ its conjugacy class.
We say that $g^G$ satisfies the $\eps$-small cancellation condition if for any $h_1 g h_1^{-1} \neq h_2 g^{\pm 1} h_2^{-1}$ in $g^G$, we have 
\[
\eps \ell(g) > \diam \left( \axe(h_1 g h_1^{-1}) \cap \axe(h_2 g h_2^{-1}) \right).\] 
\end{definition}

\begin{proposition}
\label{pro:WPD=>SC}
Let $G \action X$ be a group acting on a simplicial tree, and $g \in G$ a loxodromic element.
\begin{enumerate}
\item If $g$ is a WPD element, then there exists $M \in \N^*$ such that $g^M$ is tight. 

\item If $g$ is tight, then for any $\eps >0$ there exists $n \in \N^*$ such that the conjugacy class
$\{g^n\}^G$ satisfies the $\eps$-small cancellation condition.
\end{enumerate}
\end{proposition}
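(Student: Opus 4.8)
\emph{Part (2)} follows quickly from the definitions, so I would treat it first. Let $B$ be the tightness constant of $g$, and note that every power $g^n$ is then tight with the \emph{same} constant $B$: indeed $\axe(g^n)=\axe(g)=:L$, and if $\diam(L\cap\varphi L)>B$ then $\varphi g\varphi^{-1}=g^{\pm1}$, whence $\varphi g^n\varphi^{-1}=g^{\pm n}$. Now fix $\eps>0$ and pick $n$ so large that $\eps\,\ell(g^n)=\eps n\,\ell(g)>B$. Given $h_1g^nh_1^{-1}\neq h_2g^{\pm n}h_2^{-1}$ in the conjugacy class, set $\varphi=h_2^{-1}h_1$; applying the isometry $h_2^{-1}$ yields
\[
\diam\bigl(\axe(h_1g^nh_1^{-1})\cap\axe(h_2g^nh_2^{-1})\bigr)=\diam(\varphi L\cap L).
\]
Were this diameter $\ge\eps\,\ell(g^n)$, it would exceed $B$, and tightness of $g^n$ would give $\varphi g^n\varphi^{-1}=g^{\pm n}$, i.e.\ $h_1g^nh_1^{-1}=h_2g^{\pm n}h_2^{-1}$, contrary to hypothesis. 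Hence the diameter is $<\eps\,\ell(g^n)$, which is exactly the $\eps$-small cancellation condition.

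\emph{Part (1)} is the real work. Writing $L=\axe(g)$ and using the simplicial structure, I normalise coordinates so that $L\cong\R$ with vertex set $\Z$ and $g$ acting as $t\mapsto t+\ell$, $\ell=\ell(g)\in\N^*$, and I fix the vertex $x=0$. The plan is to analyse the setwise stabiliser $E:=\Stab_G(L)$; since in a tree $L$ is the unique bi-infinite geodesic joining its two endpoints, $E$ is also the stabiliser of that pair of endpoints, and it contains $g$. First I would prove $E$ is virtually cyclic with $\langle g\rangle$ of finite index. Restriction to $L$ gives a homomorphism $\rho\colon E\to\Isom(\R)$ whose image lies in the group of integer isometries $t\mapsto\pm t+c$ and contains the translation $t\mapsto t+\ell$, hence is virtually $\Z$; its kernel $K$ fixes $L$ pointwise, so $K\subseteq\Fix\{x,g^Nx\}$ for every $N$ and is finite by WPD. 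Thus $\langle g\rangle$ has finite index in $E$.

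Next I would show that a long overlap forces $\varphi$ into $E$ up to powers of $g$; this is where WPD enters as a form of acylindricity. If $\diam(L\cap\varphi L)$ is very large, then $\varphi$ carries a long subsegment of $L$ isometrically onto a subsegment of $L$, say by $t\mapsto\sigma t+c$ with $\sigma=\pm1$, $c\in\Z$; multiplying on both sides by suitable powers of $g$ recentres the overlap onto $[x,g^Nx]$ and places $g^a\varphi g^b$ either in $\Fix_\eps\{x,g^Nx\}$ (if $\sigma=+1$) or in the ``swap'' set $\{h:\dist(g^Nx,hx)\le\eps,\ \dist(x,hg^Nx)\le\eps\}$ (if $\sigma=-1$), with $\eps$ of order $\ell$ and $N=N(2\eps)$ as in the WPD condition. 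Both sets are finite: the first by definition, the second because left-multiplying by any one of its elements carries it into $\Fix_{2\eps}\{x,g^Nx\}$. Let $\mathcal F$ be their finite union. Then $\varphi=g^{-a}\psi g^{-b}$ with $\psi\in\mathcal F$, and since $\varphi L=g^{-a}\psi L$ we have $\diam(L\cap\varphi L)=\diam(L\cap\psi L)$; taking the overlap larger than $\max_{\psi\in\mathcal F\setminus E}\diam(L\cap\psi L)$ therefore forces $\psi\in E$. For the finitely many relevant $\psi\in\mathcal F\cap E$, the subgroups $\langle g\rangle$ and $\psi\langle g\rangle\psi^{-1}$ are both of finite index in $E$, so their intersection is $\langle g^{p_\psi}\rangle$ for some $p_\psi\ge1$; comparing with the $\Z$- or dihedral quotient $E/K$ pins down $\psi g^{p_\psi}\psi^{-1}=g^{\pm p_\psi}$. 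With $M$ the least common multiple of the $p_\psi$ we get $\psi g^M\psi^{-1}=g^{\pm M}$ for all these $\psi$, and hence $\varphi g^M\varphi^{-1}=g^{-a}\psi g^M\psi^{-1}g^a=g^{\pm M}$. As $\axe(g^M)=L$, this is precisely the statement that $g^M$ is tight.

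The main obstacle is the acylindricity step: turning the WPD hypothesis, which only bounds coarse stabilisers of the fixed pair $\{x,g^Nx\}$, into uniform control of \emph{all} elements producing a long overlap, modulo $\langle g\rangle$. The delicate points are the bookkeeping of the two orientations (in particular the finiteness of the orientation-reversing ``swap'' set) and ensuring that the recentring by powers of $g$ leaves a residual displacement bounded by a constant of order $\ell$, independent of the size of the overlap; the extraction of the uniform inverting power $M$ is then a soft consequence of the virtually cyclic structure of $E$.
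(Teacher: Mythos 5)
Your part (2) is correct and is exactly the paper's argument (the paper disposes of it in one line: take $n$ with $n\ell(g)\eps > B$); your observation that every power of a tight element is tight with the \emph{same} constant is the same one the paper records after its Definition of tightness. Your part (1) takes a genuinely different route from the paper: the paper never analyses the stabiliser $E=\Stab_G(L)$, but instead argues directly that a long overlap between $\axe(g)$ and $\axe(g')$, $g'=\phi g\phi^{-1}$, forces the elements $g'^{-i}g^i$ ($0\le i\le N$) to fix long segments, hence to lie in a single finite set $\Fix\{x,g^px\}$ whose cardinality is bounded uniformly over vertices $x$ of the axis; pigeonhole then yields $g^{i-j}=g'^{i-j}$ for some $0<i-j\le N$, and $M=N!$ works. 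Most of your alternative route is sound (the recentring by powers of $g$, the finiteness of the orientation-reversing ``swap'' set, the lcm argument over $\mathcal{F}\cap E$ using the virtually cyclic structure of $E$), but it has one genuine gap.

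The gap is the finiteness of your threshold $\max_{\psi\in\mathcal{F}\setminus E}\diam(L\cap\psi L)$. In a simplicial tree, two distinct lines need not meet in a bounded segment: they can share a half-line, and then this diameter is infinite. Nothing in the construction of $\mathcal{F}$ excludes an element $\psi$ with $\psi L\neq L$ but $L\cap\psi L$ a ray; this configuration really occurs for non-WPD actions (in $\langle a,t\mid tat^{-1}=a^2\rangle$ acting on its Bass--Serre tree, $a$ fixes pointwise a half-line of $\axe(t)$, so $\axe(t)$ and $\axe(ata^{-1})=a\,\axe(t)$ are distinct lines sharing a ray). Excluding it is therefore part of what must be proved, and it is not a formality: if some $\psi\notin E$ had infinite overlap, then \emph{no} finite tightness constant could exist for \emph{any} power $g^M$, since $\psi$ itself would give arbitrarily large overlap while $\psi g^M\psi^{-1}$ has axis $\psi L\neq L=\axe(g^{\pm M})$. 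The repair needs exactly the mechanism your proof never deploys, namely comparing the two translations along the overlap: if $L\cap\psi L$ contains a ray $R$, choose $g'=\psi g^{\pm1}\psi^{-1}$ translating along $R$ in the same direction as $g$; then $h=g'^{-1}g$ fixes $R$ pointwise, and the conjugates $g^{-i}hg^{i}$ (or $g^{i}hg^{-i}$, depending on the orientation of $R$), $i\ge 0$, fix pointwise rays exhausting an end of $L$, so all but finitely many of them lie in $\Fix\{x,g^Nx\}$. They are pairwise distinct, because if $h$ commuted with a power of $g$ it would fix $L$ pointwise, forcing $\axe(g')=L$, i.e.\ $\psi\in E$. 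This contradicts WPD (with $\eps=0$), so all overlaps for $\psi\in\mathcal{F}\setminus E$ are indeed finite and your threshold makes sense. Note that this is precisely the engine of the paper's proof, which handles finite and infinite overlaps uniformly; once you add it, your argument closes up, but without it the key step fails.
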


\begin{proof}
\begin{enumerate}
\item 
We want to prove that there exist integers $B,M$ such that for any $\varphi\in G$, if 
\[
\diam \left(\axe(g)\cap \axe(\phi g \phi^{-1})\right) > B,
\]
then $g^M = \phi g^{\pm M} \phi^{-1}$.

Since $g$ is WPD, there exists an integer $p$ such that for any vertex $x \in
\axe(g)$, $\Fix \{x, g^p x\}$ is finite.
So there is $N$ such that $|\Fix \{x, g^p x\}| < N$ independently of the choice
of the vertex $x$ (here we use the fact that the tree is simplicial). 
Let $g' = \phi g \phi^{-1}$ such that 
\[D=\diam \left(\axe(g)\cap \axe(g') \right)> 2\l(g),\]
and pick a base point $x \in \axe(g)$.
Up to replacing $g'$ by a conjugate $g^q g' g^{-q}$, we can assume that $x$ is in the segment $\axe(g) \cap \axe (g')$, but that $g^{-1} x$ is not.
Up to replacing $g'$ by its inverse we can assume $g'^{-1}g$ fixes pointwise a segment $I \in \axe(g) \cap \axe(\phi g \phi^{-1})$ of diameter $D - \ell(g)$.
Similarly, for any integer $i$ such that $D - i\ell(g)\ge 0$, the element $g'^{-i}g^i $ fixes pointwise a segment of diameter $D - i\ell(g)$. 
Choose $B$ such that $B - N\ell(g) \ge (p+1)\ell(g)$. Then $ g'^{-i}g^i  \in \Fix \{x, g^p x\}$ for any $0\leq i\leq N$. So there exist $0 \leq j < i \leq N$ such that $g'^{-i}g^i  =  g'^{-j}g^j$, so that $g^{i-j} = g'^{i-j}$ with $0 < i-j \le N$.
Then we can take $M = N!$, or any multiple.

\item
It suffices to take $n$ such that $n\ell(g) \eps > B$. \qedhere
\end{enumerate}
\end{proof}

\begin{remark}
In general, without taking a power there is no direct implication between the notion of tight and WPD element (see Examples \ref{ex:WPDnotTight} and  \ref{ex:TightNotWPD}). 
\end{remark}

\begin{lemma}{\cite[Lemma 4.2 and Corollary 4.3]{MO}} \label{lem:cor4.2}
Let $G \action X$ a group acting on a simplicial tree, and $g \in G$ loxodromic.
Assume that there exist $u,v \in \axe(g)$ such that $\Fix \{u,v\}$ is finite.
Then $g$ is a WPD element.
\end{lemma}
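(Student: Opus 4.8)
The plan is to verify the WPD condition of Definition~\ref{def:WPD} by hand, combining the displacement formula for tree isometries with the finiteness of $\Fix\{u,v\}$. Fix once and for all a vertex $x_0 \in \axe(g)$ as base point; since enlarging $\eps$ only enlarges $\Fix_\eps\{x_0, g^N x_0\}$, it suffices to treat an arbitrary large integer $\eps$, and I will produce a suitable $N = N(\eps)$. I may also assume $u,v$ are vertices: replacing them by vertices $u_0 \le u$ and $v_0 \ge v$ on the axis only enlarges the segment $[u,v]$, hence shrinks the (still finite) pointwise stabilizer. Finally I orient $\axe(g)$ in the direction of translation of $g$.

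The geometric input is the standard identity $\dist(z,hz) = \ell(h) + 2\,\dist(z, \Min(h))$, valid for every isometry $h$ of a tree and every point $z$, where $\Min(h) = \axe(h)$ when $h$ is loxodromic and is the fixed subtree when $h$ is elliptic. Applied to the two conditions defining $\Fix_\eps\{x_0, g^N x_0\}$, it shows that any such $h$ satisfies $\ell(h) \le \eps$ and that both $x_0$ and $g^N x_0$ lie within $\eps/2$ of the convex subtree $\Min(h)$. Writing $p^{*}, q^{*}$ for the nearest-point projections of $x_0, g^N x_0$ onto $\Min(h)$, the geodesic $[x_0, g^N x_0] \subseteq \axe(g)$ passes through $p^{*}$ and $q^{*}$, so that $[p^{*}, q^{*}] \subseteq \axe(g) \cap \Min(h)$ is a long subsegment of the axis obtained from $[x_0, g^N x_0]$ by trimming at most $\eps/2$ at each end. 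As projections of vertices onto a simplicial subtree, $p^{*}$ and $q^{*}$ are vertices of $\axe(g)$ within $\eps/2$ of $x_0$, resp.\ $g^N x_0$, and $h$ acts on $[p^{*}, q^{*}]$ simply as the translation of length $\ell(h)$ along $\Min(h)$ (the identity if $h$ is elliptic). I then attach to each $h$ the discrete datum $D(h) = (p^{*}, q^{*}, \ell(h), \sigma)$, where $\sigma \in \{+,-\}$ records the direction of this translation relative to the orientation of $\axe(g)$; since there are finitely many vertices of $\axe(g)$ within $\eps/2$ of $x_0$ or of $g^N x_0$ and $\ell(h) \in \{0, 1, \dots, \eps\}$, the datum $D$ takes only finitely many values.

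The crux is that elements with the same datum agree on a long central segment: if $D(h_1) = D(h_2)$ then $h_1$ and $h_2$ translate the common segment $[p^{*}, q^{*}] \subseteq \Min(h_1) \cap \Min(h_2)$ by the same amount in the same direction, hence coincide on $[p^{*}, q^{*}]$ trimmed by $\ell(h) \le \eps$ at each end. To convert this into membership in a fixed finite group I exploit that the hypothesis is stable under translating the pair: for every $j$ one has $\Fix\{g^j u, g^j v\} = g^j \Fix\{u,v\} g^{-j}$, which is finite, while $[g^j u, g^j v]$ is $[u,v]$ shifted $j\ell(g)$ to the right. Given $\eps$, I choose $j$ so that $g^j u$ lies at least $2\eps$ to the right of $x_0$, and then $N$ so large that $g^j v$ lies at least $2\eps$ to the left of $g^N x_0$; a short computation with the $\eps/2$ and $\eps$ margins shows $[g^j u, g^j v]$ is then contained in the agreement subsegment of every relevant $h$. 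Consequently $D(h_1) = D(h_2)$ forces $h_2^{-1} h_1$ to fix $g^j u$ and $g^j v$, so $h_2^{-1} h_1 \in \Fix\{g^j u, g^j v\}$. Hence the fibres of $D$ lie in left cosets of this finite group, and $|\Fix_\eps\{x_0, g^N x_0\}|$ is bounded by $|{\rm image}(D)| \cdot |\Fix\{g^j u, g^j v\}| < \infty$, establishing the WPD property.

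I expect the main obstacle to be precisely the failure of local finiteness: the ball of radius $\eps$ about $x_0$ may contain infinitely many vertices, so the individual position $h x_0$ cannot be controlled directly, and a naive "finitely many positions'' argument breaks down. The device that circumvents this is the passage to the quotient $h_2^{-1} h_1$, trading uncontrolled positions for finitely many discrete invariants $D(h)$ modulo a fixed finite stabilizer; it is exactly at this step, via the conjugated pair $(g^j u, g^j v)$, that the hypothesis on $\Fix\{u,v\}$ is brought to bear.
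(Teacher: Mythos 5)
Your proof is correct, and it rests on the same counting mechanism as the paper's proof — simpliciality gives finitely many combinatorial possibilities for how an element of $\Fix_\eps$ can act on a long central piece of $\axe(g)$, and any two elements realizing the same possibility differ by an element of a conjugate of the finite group $\Fix\{u,v\}$ — but the route to that mechanism is genuinely different. The paper builds its test points around the given segment (taking $x = g^{-m}u$, $y = g^{2m}u$ with $v \in [u, g^m u]$), projects the images $hx, hy$ back onto $[x,y]$, and uses the midpoint fact to deduce $[u,v] \subset \Min(h)$ and $h[u,v] \subset [x,y]$ with preserved orientation; its finite datum is the position of the image segment $h[u,v]$, handled via chosen representatives $t_i$, so that $\Fix_\eps\{x,y\}$ lies in the finite set $\{t_i f_j\}$. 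You instead keep an arbitrary base point on the axis, use the displacement identity $\dist(z,hz) = \ell(h) + 2\dist(z,\Min(h))$ to see that $\Min(h)$ contains a long central subsegment $[p^*,q^*]$ of $[x_0, g^N x_0]$ on which $h$ acts as a translation by $\ell(h) \le \eps$, take $(p^*, q^*, \ell(h), \sigma)$ as the finite datum, and import the finite group by sliding the pair $(u,v)$ with a power $g^j$ into the agreement zone, comparing elements pairwise via $h_2^{-1}h_1$. The paper's version is the more elementary (it needs only the midpoint fact stated in its preliminaries); yours is more structural: it makes the discrete invariant and the role of $\Min(h)$ explicit, avoids any choice of coset representatives, and the equivariance $\Fix\{g^j u, g^j v\} = g^j \Fix\{u,v\} g^{-j}$ decouples the base point from the segment $[u,v]$. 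Two glosses to patch in a write-up: the geodesic $[x_0, g^N x_0]$ passes through $p^*$ and $q^*$ only once one knows $p^* \neq q^*$, which requires $N\ell(g) > \eps$ (guaranteed by your final choice of $N$, but it should be said); and if the action admits edge inversions, $p^*$, $q^*$ and $\ell(h)$ may a priori be half-integral rather than integral — harmless, since only the finiteness of the set of possible values is used.
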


\begin{proof}
Let $\eps \ge 0$, and $m \in \N$ such that $\dist(u,g^mu) > \max \{ \dist(u,v),\eps \}$.
We will show that $\Fix_\eps \{u,g^{3m}u \}$ is finite, so that $g$ is a WPD element.
Setting $x= g^{-m}u$ and $y = g^{2m}u$,  this is equivalent to showing that $\Fix_\eps \{x,y \}$ is finite.
Up to replacing $g$ by its inverse, we can assume that $v \in [u,g^m u]$.

\[
\begin{tikzpicture}[node distance=3cm,font=\small,thick]
\coordinate[label=above:{$x = g^{-m}u$}] (x) at (0,0);
\coordinate [right=of x, label=above:$u$] (u); 
\coordinate [right=of u, label=above:$g^m u$] (gu);
\coordinate [right=of gu, label=above:{$y = g^{2m} u$}] (y);
\coordinate [right=of y, label=above:{$g^{3m} u$}] (gy);
\coordinate [label=above:$a$] (a) at ($ (x)!.6!(u) $) {};
\coordinate [below=of a,yshift=2.5cm,label=right:$h x$] (hx) {};
\coordinate [label=above:$b$] (b) at ($ (gu)!.5!(y) $) {};
\coordinate [below=of b,yshift=2cm,label=right:$h y$] (hy) {};
\coordinate [label=above:$v$] (v) at ($ (u)!.7!(gu) $) {};
\draw (x)--(gy) (a)--(hx) (b)--(hy);
\foreach \v in {x,u,gu,y,gy,v,a,hx,b,hy} {
  \node[bullet] at (\v) {};
}
\end{tikzpicture}
\]

Consider $h \in \Fix_\eps \{x,y \}$, so that we also have $h^{-1} \in \Fix_\eps \{x,y \}$.
Let $a, b$ be the respective projections of $hx, hy$ on the segment $[x,y]$ (the figure illustrates the most difficult case where $a$ and $b$ lie in the interior of this segment).
Since $\dist(x,hx) \le \eps < \dist(u,g^m u) = \dist(x,u)$, we have $a \in [x, u]$, and similarly $b \in [g^m u, y]$.
In particular $[u,v] \subset [a,b] = [x,y] \cap [hx,hy]$.
Similarly, working with $h^{-1}$ instead of $h$ we get $[u,v] \subset [x,y] \cap [h^{-1}x,h^{-1}y]$, and translating by $h$ this gives $h[u,v] \subset [a,b] \subset [x,y]$. 
Moreover since the set $\Min(h)$ contains the respective middle points of $[x, hx]$ and $[y, hy]$, we get that $[u,v] \subset \Min (h)$, so the segments $[u,v]$ and $[hu, hv]$ have the same orientation inside $[x,y]$.   

Since $X$ is simplicial, there exist finitely many $t_i \in G$, $i = 1, \ldots, r$, such that for any $f \in G$, if $[fu, fv] \subset [x,y]$ with the same orientation as $[u,v]$, then $fu = t_iu$ and $fv = t_iv$ for some $i$. 
In particular,  $t_i^{-1}h \in \Fix \{u,v\}$ for some $i$.
By assumption, $\Fix\{u,v\} = \{f_j;\; j = 1, \dots, s\}$ for some $f_j \in G$.
Finally 
\[\Fix_\eps \{x,y \} \subset \{t_if_j \mid 1 \le i \le r, 1 \le j \le s\}\]
is finite, as expected.
\end{proof}

\section{Group acting on a tree, following \cite{DGO}}
\label{sec:DGO}

We want to prove Theorem \ref{thm:SCT}, following the main lines of \cite{DGO}.
First we introduce the notion of cone-off of the tree with respect to a family of geodesic, adapting \cite[\S5.3]{DGO}.
Then we define the key notion of windmill and explain how to enlarge windmills in order to prove the main theorem.
In the last subsection we discuss how much we departed from the definitions in \cite{DGO}, in order to take advantage of the fact that the group acts on a simplicial tree.

\subsection{Cone-off}

\begin{setup} \label{setup:family}
Let $X$ be a simplicial tree, $g \in \Isom(X)$ a WPD loxodromic element.
We denote by $\Ql$ the collection of axes of conjugates of $g$ (or equivalently of $g^n$, for any integer $n\neq 0$):
\[
\Ql = \{ \axe(\phi g \phi^{-1}) \mid \phi \in G\}.
\]
We define:
\[ \Delta = \underset{Q_1\neq Q_2 \in \Ql}{\sup}\diam(Q_1\cap Q_2).\]
By Proposition \ref{pro:WPD=>SC} we have $\Delta < \infty$. 
We choose $n\in \N$ large enough such that 
\[ \l_X(g^n)>7\Delta. \] 

Given these data we make the following definitions:
\begin{itemize}
\item
The \textit{cone-off} $\cone X$ (of $X$ relatively to the
family $\Ql$) is defined as the graph obtained from $X$ by adding one new vertex $c_Q$ for each geodesic $Q \in \Ql$, and by putting an edge of length $r_0 > 0$ between $c_Q$ and each vertex of $Q$.
We choose once and for all the radius $r_0$ to be a real number such that $0 < 2r_0 < 1$, which justifies that we keep the radius implicit in the notation.
\item 
There is a natural injection from $X$ to $\cone X$, which allows us to see $X$ as a subset of $\cone X$.
Since $\Ql$ is preserved by $G$, the action of $G$ on $X$ naturally extends as an action by isometries on $\cone X$.
In the following, when we speak of segments, diameters, etc, by default we mean relatively to the distance in the cone-off $\cone X$. When we need to consider the same notions relatively to the distance in the tree $X$, we shall use a subscript such as $[x,y]_X$, $\diam_X$ or $\lX(g^n)$. 
\item 
We call each new vertex $c_Q$ an \emph{apex}, and we denote by $\Cl$ the family of apices in $\cone X$.
\item 
For each $Q \in \Ql$, we denote $\cone Q \subset \cone X$ the union of $Q$ with all edges to the apex $c_Q$.
\item 
Let $A$ be a subgraph of $\cone{X}$.
We say that $Q\in \Ql$ is \textit{adjacent to} $A$ if $Q$ intersects $A$ but
$c_Q$ does not belong to $A$.
In that case we also say that $c_Q$ is an adjacent vertex to $A$, and we denote $\Adj(A)$ the set of all adjacent vertices to $A$.
\item
We say that an edge $e \subset X$ is an \emph{insulator edge} if $e$ is not contained in any $Q \in \Ql$.
\item
If $c$ is the apex associated with $Q = \phi g \phi^{-1}$, we will use interchangeably the notation $G_c$ or $G_Q$ to denote the subgroup $\langle \phi g^n \phi^{-1} \rangle$. 
\end{itemize}
\end{setup}

We describe now geodesics in $\mathring{X}$ between two points of the tree. 
Then we give a sufficient condition forcing them to pass through a given apex.

\begin{proposition} \label{pro:geodesic}
Let $x,y \in X$ be two vertices.
\begin{enumerate}
\item \label{geodesic1} 
The distance $\dist(x,y)$ in $\cone X$ is equal to $m' + 2mr_0$, where $m'$ is the number of insulator edges in $[x,y]_X$, and $m$ is the minimal number of geodesics $Q_1, \cdots$, $Q_m \in \Ql$ necessary to cover all non insulator edges of $[x,y]_X$.
\item \label{geodesic2} 
Given any such minimal covering collection of geodesics $Q_1, \cdots, Q_m$, with associated apices $c_1, \cdots, c_m$, we can write $[x,y]$ as the concatenation of the insulator edges and of segments of the form $[a_i, c_i] \cup [c_i, b_i]$, with $[a_i, b_i]_X \subset Q_i$.    
\end{enumerate}
\end{proposition}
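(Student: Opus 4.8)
The plan is to prove the two statements together by establishing matching upper and lower bounds for $\dist(x,y)$, the upper bound coming from an explicit path that simultaneously witnesses the concatenation claimed in part~\ref{geodesic2}. Since $x,y$ are vertices and $\cone X$ is a metric graph whose edges have length $1$ (tree edges) or $r_0$ (cone edges), the distance is realized by an edge-path, which I decompose into maximal \emph{tree-moves} and \emph{apex-jumps}. As an apex $c_Q$ is joined only to the vertices of $Q$, every visit to $c_Q$ enters from a vertex $u\in Q$ and leaves from a vertex $w\in Q$, costing exactly $2r_0$, with net effect the replacement of the tree-segment $[u,w]_X\subset Q$ by a jump. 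Thus any $x$--$y$ path is a walk in $X$ in which any sub-segment with both endpoints on a common $Q\in\Ql$ may be replaced by a jump of cost $2r_0$, while each tree edge costs $1$.

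For the upper bound and part~\ref{geodesic2}, I start from a minimal covering collection $Q_1,\dots,Q_m$. Each $Q_i\cap[x,y]_X$ is a sub-segment of consecutive non-insulator edges (intersection of two convex subsets of a tree), so the covering problem splits over the maximal runs of non-insulator edges and becomes an ordinary interval-covering problem on each run. A standard interval-covering argument then lets me trim overlaps so that the chosen intervals become consecutive, non-overlapping, nonempty blocks $[a_i,b_i]_X\subset Q_i$ which, together with the insulator edges, partition $[x,y]_X$. Routing directly along insulator edges and through the apex $c_i$ for each block produces the concatenation asserted in part~\ref{geodesic2}, of total length $m'+2mr_0$; hence $\dist(x,y)\le m'+2mr_0$.

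For the lower bound I project onto $L=[x,y]_X$ by the nearest-point retraction $\rho$. The geometric input is that an apex-jump through $c_Q$ from $u$ to $w$ covers the sub-segment $[\rho(u),\rho(w)]$ of $L$, and that $[\rho(u),\rho(w)]\subset[u,w]_X\subset Q$; this follows from the gate property $\rho(u)\in[u,\rho(w)]$ of projections onto a convex set in a tree. In particular a jump can cover only non-insulator edges. As the projected walk runs from $x$ to $y$, every edge of $L$ is crossed, and an insulator edge can be crossed only by an actual tree-move along that very edge, which already accounts for $m'$. Writing $F$ for the set of non-insulator edges crossed by tree-moves and $N$ for the number of apex-jumps, the jumps together with $F$ cover all non-insulator edges using at most $N+|F|$ geodesics, so $m\le N+|F|$. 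The non-insulator contribution to the length is then at least $|F|+2r_0N\ge |F|+2r_0(m-|F|)=2r_0m+|F|(1-2r_0)\ge 2r_0m$, the last inequality being exactly the constraint $2r_0<1$. Adding the insulator contribution gives $\dist(x,y)\ge m'+2mr_0$, matching the upper bound.

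The main obstacle is the lower-bound bookkeeping: setting up the projection so that jumps provably cover only sub-segments lying inside some $Q$, and then trading tree-moves against jumps through the inequality $m\le N+|F|$ combined with $2r_0<1$. The interval-covering step underlying part~\ref{geodesic2} is routine but must be carried out carefully to ensure the blocks $[a_i,b_i]$ are nonempty and genuinely partition the non-insulator part.
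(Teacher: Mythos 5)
Your proof is correct, and half of it coincides with the paper's: the upper bound, obtained by trimming a minimal covering family into consecutive nonempty blocks and routing through the corresponding apices, is exactly the paper's construction (there the trimming is the inductive formula $[a_i,b_i]_X=Q_i\cap[b_{i-1},y]_X$), and both proofs obtain part~(2) the same way, by noting that this explicit path attains the distance. Where you genuinely differ is the lower bound. The paper works directly with a geodesic $[x,y]\subset\cone X$: the hypothesis $2r_0<1$ is used at the very start to show, via a shortcut through an apex, that every tree edge of a geodesic must be an insulator edge; then the union of these tree edges with the excursion segments $[a_i,b_i]_X$ is a connected subtree of $X$ containing $x$ and $y$, hence contains $[x,y]_X$, which simultaneously forces the visited geodesics of $\Ql$ to cover all non-insulator edges (so $k\ge m$) and the geodesic to contain the $m'$ insulator edges. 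You instead bound \emph{arbitrary} edge-paths from below: the projection/gate-property analysis of the shadow of each move (a correct tree fact, and the right way to see that a jump only covers edges inside a single $Q$), followed by the trade-off $|F|+2r_0N\ge 2r_0m$ coming from $m\le N+|F|$ and $1-2r_0>0$. Both arguments hinge on $2r_0<1$, just in different places: the paper spends it on geodesics having no non-insulator tree edges, you spend it on exchanging tree-moves for jumps. The paper's version is shorter for exactly that reason; what yours buys is that it never presupposes the existence or structure of geodesics in the non-locally-finite graph $\cone X$ --- your matching bounds prove that the constructed path \emph{is} a geodesic, whereas the paper opens with ``let $[x,y]$ be a geodesic segment''. (Minor point: your opening claim that the distance ``is realized by an edge-path'' would itself require justification in a non-locally-finite graph, but your argument never uses attainment, only that the distance is the infimum of lengths of edge-paths, so nothing is lost.)
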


\begin{proof}
Let $[x,y] \subset \cone X$ be a geodesic segment.
Observe that if $e \subset [x,y]$ is an edge of $X$, it must be an insulator edge, otherwise replacing this edge by a shortcut of length $2r_0 < 1$ passing through an apex we would contradict the assumption that $[x,y]$ is geodesic.  
Let $c_1, \dots, c_k$ be the apices contained in $[x,y]$, ordered such that $\dist(x,c_i) < \dist(x,c_{i+1})$ for each $i$. 
Denote by $Q_i$ the geodesic corresponding to the apex $c_i$. 
Let $[a_i,c_i] \cup [c_i, b_i]$ be the segment of length $2r_0$ that is obtained as the intersection of $[x,y]$ with the cone $\cone Q_i$.
Then the union of the insulator edges and of the paths $[a_i, b_i]_X$ form a connected subtree of $X$ containing $x$ and $y$, and so also $[x,y]_X$. 
This shows that all $m'$ insulator edges in $[x,y]_X$ must be in $[x,y]$, and $[x,y]$ is of length $m' + 2kr_0$.

Conversely, given a minimal collection $Q_1, \cdots, Q_m \in \Ql$ covering the non insulator edges of $[x,y]_X$, we define inductively a subsegment $[a_i,b_i]_X$ in $Q_i \cap [x,y]_X$ by setting
$[a_i, b_i]_X = Q_i \cap [b_{i-1},y]_X$ (with the convention $b_0 = x$).
By concatenation of the insulator edges and the $[a_i, b_i]$ we construct a path from $x$ to $y$ in $\cone X$ of length $m' + 2mr_0$ (see Figure~\ref{fig:path}).
This shows $k = m$, and also gives \ref{geodesic2}.
\end{proof}

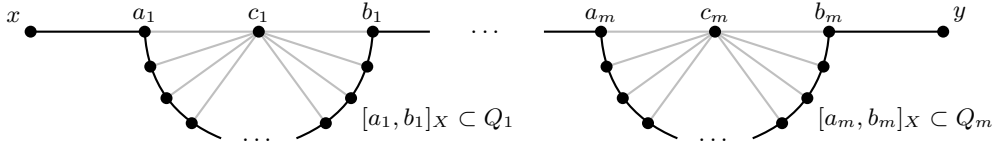
\begin{figure}
\[
\begin{tikzpicture}[scale = 1.5,font=\small,thick]
\coordinate [label=above left:$x$] (x) at (0,0);
\coordinate [label=above:$a_1$] (a1) at (1,0);
\coordinate [label=above:$c_1$] (c1) at (2,0);
\coordinate [label=above:$b_1$] (b1) at (3,0);
\coordinate (b1') at (3.5,0);
\coordinate (ak') at (4.5,0);
\coordinate [label=above:$a_m$] (ak) at (5,0);
\coordinate [label=above:$c_m$] (ck) at (6,0);
\coordinate [label=above:$b_m$] (bk) at (7,0);
\coordinate [label=above right:$y$] (y) at (8,0);
\draw (x)--(a1);
\draw[lightgray] (a1)--(b1) (ak)--(bk);
\draw (b1)--(b1') (ak')--(ak)  (bk)--(y);
\draw[draw=none] (b1) to node[anchor=center]{$\dots$} (ak);
\draw[draw=none,inner sep = 0] (1,0) arc (180:360:1)  
node(p1)[pos=0.1]{} node(p2)[pos=0.2]{} node(p3)[pos=0.3]{} node(p3')[pos=0.4]{}
node(p4')[pos=0.6]{} node(p4)[pos=0.7]{} node(p5)[pos=0.8]{} node(p6)[pos=0.9]{};
\draw (a1) to[bend right=32] (p3'); 
\draw (p4') to[bend right=32,"{$[a_1,b_1]_X \subset Q_1$}",swap] (b1); 
\draw[draw=none] (p3') to node[anchor=center]{$\dots$} (p4');
\foreach \i in {1,...,6}{
\draw[lightgray] (c1)--(p\i);
}
\draw[draw=none,inner sep = 0] (5,0) arc (180:360:1)  
node(q1)[pos=0.1]{} node(q2)[pos=0.2]{} node(q3)[pos=0.3]{} node(q3')[pos=0.4]{}
node(q4')[pos=0.6]{} node(q4)[pos=0.7]{} node(q5)[pos=0.8]{} node(q6)[pos=0.9]{};
\draw (ak) to[bend right=32] (q3'); 
\draw (q4') to[bend right=32,"{$[a_m,b_m]_X \subset Q_m$}",swap] (bk); 
\draw[draw=none] (q3') to node[anchor=center]{$\dots$} (q4');
\foreach \i in {1,...,6}{
\draw[lightgray] (ck)--(q\i);
}
\foreach \v in {x,y,a1,b1,c1,ak,bk,ck,p1,p2,p3,p4,p5,p6,q1,q2,q3,q4,q5,q6} {
\node[bullet] at (\v) {};
}
\end{tikzpicture}
\]
\caption{Associated path. (Recall that the radius $r_0$ satisfies $0 < 2r_0 < 1$, so the figure is combinatorially but not metrically accurate.)} \label{fig:path}
\end{figure}

\begin{proposition}\label{pro:contains_c}
Let $x,y \in X$ be two vertices, and denote $P = [x,y]_X \subset X$.
Let $Q \in \Ql$, with corresponding apex $c \in \cone X$.
Assume that $\diam_X (P \cap Q)> 3\Delta$.
Then every geodesic in $\cone X$ between $x$ and $y$ contains the apex $c$.
\end{proposition}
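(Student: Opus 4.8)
The plan is to reduce the statement to a purely combinatorial fact about minimal coverings, via Proposition~\ref{pro:geodesic}. Indeed, by the proof of that proposition every geodesic of $\cone X$ from $x$ to $y$ determines a minimal family $Q_1,\dots,Q_m\in\Ql$ covering the non-insulator edges of $P=[x,y]_X$, and the geodesic passes through exactly the apices $c_{Q_1},\dots,c_{Q_m}$. Hence ``every geodesic contains $c$'' is equivalent to ``$Q$ belongs to every minimal covering of the non-insulator edges of $P$''. So I would fix a minimal covering $\mathcal{F}=\{Q_1,\dots,Q_m\}$, assume for contradiction that $Q\notin\mathcal{F}$, and produce a strictly smaller covering.

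Set $J=P\cap Q$; since $P$ and $Q$ are both convex in the tree, $J$ is a sub-segment of $P$ with $\diam_X(J)>3\Delta$, and all its edges are non-insulator, being contained in $Q\in\Ql$. Let $\mathcal{F}_J\subset\mathcal{F}$ be the subfamily of geodesics covering at least one edge of $J$. The first step is a counting bound: for $Q'\in\mathcal{F}_J$ (so $Q'\ne Q$) one has $Q'\cap J\subset Q'\cap Q$, whence $\diam_X(Q'\cap J)\le\Delta$; since these sub-segments cover $J$, this forces $|\mathcal{F}_J|\ge 4$.

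The heart of the argument is an exchange step. I would first observe that no $Q'\in\mathcal{F}_J$ can meet both components of $P\setminus J$: otherwise $Q'\cap P$ would be a segment containing all of $J$, giving $\diam_X(Q'\cap Q)\ge\diam_X(J)>\Delta$ and hence $Q'=Q$, excluded. Writing $P^-,P^+$ for the two components of $P\setminus J$ and $j^-,j^+$ for the endpoints of $J$, the union $K=\bigcup_{Q'\in\mathcal{F}_J}(Q'\cap P)$ is then a sub-segment $[p^-,p^+]$ of $P$ containing $J$, with $p^\pm$ on the side $P^\pm$. Picking a geodesic $Q'_-\in\mathcal{F}_J$ through $p^-$, convexity shows that $Q'_-$ also contains $j^-$, hence the whole segment $[p^-,j^-]$; symmetrically for $Q'_+$ and $[j^+,p^+]$. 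Therefore the three geodesics $\{Q,Q'_-,Q'_+\}$ already cover $K=[p^-,p^+]$, and in particular every non-insulator edge of $P$ that $\mathcal{F}_J$ covered. Replacing $\mathcal{F}_J$ by $\{Q,Q'_-,Q'_+\}$ inside $\mathcal{F}$ thus yields a covering of size at most $m-|\mathcal{F}_J|+3\le m-1$, contradicting minimality. (When $\mathcal{F}_J$ fails to reach one of the sides, the replacement is even smaller.) This forces $Q\in\mathcal{F}$ and proves the claim.

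I expect the main obstacle to be the bookkeeping in the exchange step. One must check that the non-insulator edges of $P$ lying outside $K$ remain covered by $\mathcal{F}\setminus\mathcal{F}_J$, which holds because by definition of $K$ no member of $\mathcal{F}_J$ covers them while $\mathcal{F}$ covers all of them; and one must check that $Q'_-,Q'_+$ genuinely cover the portions of $K$ outside $J$, which is exactly where convexity of intersections in the tree is used. The counting bound $|\mathcal{F}_J|\ge 4$ uses the hypothesis only through the weaker inequality $\diam_X(J)>3\Delta$, so there is comfortable slack relative to the condition $\lX(g^n)>7\Delta$.
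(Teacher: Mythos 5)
Your argument is correct, and it reaches the conclusion by a route dual to the paper's rather than identical to it. Both proofs rest on Proposition~\ref{pro:geodesic} and on the same pigeonhole: any $Q'\in\Ql$ distinct from $Q$ satisfies $\diam_X(Q'\cap Q)\le\Delta$, so the hypothesis $\diam_X(P\cap Q)>3\Delta$ forces many covering pieces along $P\cap Q$. The paper stays metric: given a geodesic $[x,y]$ avoiding $c$, it uses the decomposition of Proposition~\ref{pro:geodesic}\ref{geodesic2} to find three consecutive transition points $a_{i_0+1},a_{i_0+2},a_{i_0+3}$ inside $P\cap Q$, and then the shortcut $[a_{i_0+1},c]\cup[c,a_{i_0+3}]$, of length $2r_0$, beats the sub-path of length $4r_0$ through the apices $c_{i_0+1},c_{i_0+2}$ --- contradicting geodesicity. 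You instead pass to the combinatorial side of Proposition~\ref{pro:geodesic}\ref{geodesic1}: a geodesic avoiding $c$ yields a minimal covering family $\mathcal{F}$ with $Q\notin\mathcal{F}$ (this translation is legitimate; note it uses that distinct apices on a geodesic correspond to distinct elements of $\Ql$, which holds since a geodesic visits no vertex twice), at least four members of $\mathcal{F}$ are needed on $J=P\cap Q$, and your exchange --- replacing $\mathcal{F}_J$ by $Q$ together with the two extremal pieces $Q'_\pm$ --- produces a covering with at most $m-1$ members, contradicting minimality. Since Proposition~\ref{pro:geodesic}\ref{geodesic1} identifies the geodesic length with $m'+2mr_0$, contradicting geodesicity and contradicting minimality of the covering are equivalent, so neither formulation is more general; the paper's shortcut is more economical (two apices replaced by one already shortens the path, with no need for the segment $K$, the extremal pieces, or the count $|\mathcal{F}_J|\ge 4$), while your version isolates a clean combinatorial statement about coverings and treats all geodesics at once. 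Your bookkeeping --- convexity of $Q'\cap P$, the fact that edges outside $K$ remain covered by $\mathcal{F}\setminus\mathcal{F}_J$, and the degenerate cases where $J$ reaches an endpoint of $P$ --- is sound.
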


\begin{proof}
Let $[x,y]$ be a geodesic segment, and let $c_1, \dots, c_m$ be the ordered list of apices contained in $[x,y]$, with associated geodesic $Q_1, \cdots, Q_m$. 
Let $[a_i, b_i]_X \subset Q_i$ be the segments provided by Proposition \ref{pro:geodesic}\ref{geodesic2}. 
Let $i_0$ be the largest integer such that $[a_{i_0},y]_X$ contains $P\cap Q$ (see Figure \ref{figure_proof_QP}). 
Assume that $c$ is not in the list of apices, so that for each $i = 1, \cdots, m$ we have $\diam([a_i,b_i]_X \cap Q) \le \Delta$. The condition $\diam_X (P \cap Q)> 3\Delta$ implies that $a_{i_0+1}$, $a_{i_0+2}$ and $a_{i_0+3}$ belong to $P\cap Q$. 
But then $[a_{i_0+1}, c] \cup [c, a_{i_0+3}]$ is a path of length $2r_0$, in contradiction with the assumption that the path of length $4r_0$ through $a_{i_0+1}, c_{i_0+1}, a_{i_0+2}, c_{i_0+2}, a_{i_0+3}$ was geodesic. 
\end{proof}

 \begin{figure}[h]
	\[
	\begin{tikzpicture}[scale = 0.9,font=\small,thick]
	\draw (-5,0) node[bullet]{} node[below] {$x$};
	\draw (4.5,0) node[bullet]{} node[below] {$y$};
	\draw (1.5,2) node[right] {$Q$};
	\draw (3.5,0) node[below] {$P$};
	\draw[lightgray] (-3,-1.5) -- (-4,0);
	\draw[lightgray] (-3,-1.5) -- (-3.5,0);
	\draw[lightgray] (-3,-1.5) -- (-3,0);
	\draw[lightgray] (-3,-1.5) -- (-2.5,0);
	\draw[lightgray] (-3,-1.5) -- (-2,0);
	\draw[lightgray] (-1,-1.5) -- (-2,0);
	\draw[lightgray] (-1,-1.5) -- (-1.5,0);
	\draw[lightgray] (-1,-1.5) -- (-1,0);
	\draw[lightgray] (-1,-1.5) -- (-0.5,0);
	\draw[lightgray] (-1,-1.5) -- (0,0);          
	\draw[lightgray] (-1,-1.5) -- (0,0);
	\draw[lightgray] (1,-1.5) -- (0,0);          
	\draw[lightgray] (1,-1.5) -- (0.5,0);   
	\draw[lightgray] (1,-1.5) -- (1,0);   
	\draw[lightgray] (1,-1.5) -- (1.5,0);   
	\draw[lightgray] (1,-1.5) -- (2,0);   
	\draw[lightgray] (-.5,1.5) -- (2,0); 
	\draw[lightgray] (-.5,1.5) -- (1.5,0);   
	\draw[lightgray] (-.5,1.5) -- (1,0);  
	\draw[lightgray] (-.5,1.5) -- (0.5,0);  
	\draw[lightgray] (-.5,1.5) -- (0,0);  
	\draw[lightgray] (-.5,1.5) -- (-0.5,0);  
	\draw[lightgray] (-.5,1.5) -- (-1,0);  
	\draw[lightgray] (-.5,1.5) -- (-1.5,0);  
	\draw[lightgray] (-.5,1.5) -- (-2,0);  
	\draw[lightgray] (-.5,1.5) -- (-2.5,0);  
	\draw[lightgray] (-.5,1.5) -- (-3,0);  
	\draw[lightgray] (-.5,1.5) -- (-3,0.5);  
	\draw[lightgray] (-.5,1.5) -- (-3,1); 
	\draw[lightgray] (-.5,1.5) -- (2.5,0.5);  
	\draw[lightgray] (-.5,1.5) -- (2.5,0);  
	\draw[lightgray] (-.5,1.5) -- (2.5,1); 
	\draw (-4,0) node[below] {$a_{i_0}$} node[bullet]{};
	\draw (-2,0) node[below] {$a_{i_0+1}$} node[bullet]{};
	\draw (0,0) node[below] {$a_{i_0+2}$}  node[bullet]{};
	\draw (2,0) node[below] {$a_{i_0+3}$}  node[bullet]{};
	\draw (-.5,1.5) node[bullet]{} node[above] {$c$};
	\draw (-3,-1.5) node[bullet]{} node[below] {$c_{i_0}$};
	\draw (-1,-1.5) node[bullet]{} node[below] {$c_{i_0+1}$};
	\draw (1,-1.5) node[bullet]{} node[below] {$c_{i_0+2}$};
	\draw[thick] (-5,0)-- (4.5,0);
	\draw[<-, thick] (2.5,2)-- (2.5,0);
	\draw[thick] (-3,2)-- (-3,0);
	\end{tikzpicture}\]
	\caption{A geodesic between $x$ and $y$ has to pass through $c$ when $\diam_X(Q\cap P)> 3\Delta$. \label{figure_proof_QP}}
\end{figure}
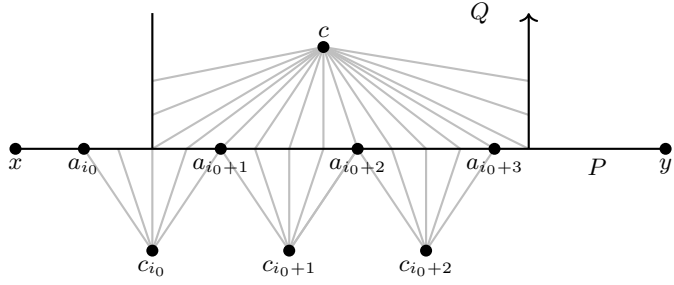

\subsection{Windmills}

We assume Set-up \ref{setup:family}.

\begin{definition}Let $A\subset \mathring{X}$ be a subgraph.
\begin{itemize}
\item The group $G_A$ is the group generated by all $G_c$ where $c$ runs over all apices in $A$:
\[G_A=\langle G_c\mid c\in A \cap \Cl\rangle.\]
\item The \textit{saturate} of $A$, denoted by $\Sat(A)$, is the minimal set containing $A$ that is invariant under the action of $G_A$.
The set $A$ is \textit{saturated} if $A=\Sat(A)$.
\item $A$ is \emph{complete} if for each apex $c$ associated with a geodesic $Q \in \Ql$, either one of the conditions $c \in A$ or $Q\subset A$ implies $\cone Q \subset A$.
\item $A$ is \emph{quasiconvex} if $A$ is connected and $A \cap X$ is connected.
\end{itemize}
\end{definition}

\begin{lemma} \label{lem:xqc}
Let $A \subset \cone X$ be a quasiconvex subgraph, $Q \in \Ql$ an adjacent geodesic to $A$, and $c$ the apex of $\cone Q$.
Let $y \in A \cap X$ be a vertex, and $p$ be the projection of $y$ on $Q$ in the tree $X$.
Then $p \in A$, and there exists a geodesic $[y,c]$ of the form $[y,p] \cup [p,c]$.
Moreover, for any $p' \in Q$ such that there exists a geodesic $[y,c]$ of the form $[y,p'] \cup [p',c]$, we have $d_X(p,p') \le \Delta$.
\end{lemma}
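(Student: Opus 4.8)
The plan is to handle the three assertions in turn, using Proposition~\ref{pro:geodesic} throughout to translate cone-off distances into the combinatorial data of insulator edges and minimal coverings. For the inclusion $p \in A$ I would argue by convexity. Since $Q$ is adjacent to $A$ it meets $A$, and as $Q \subset X$ I can pick a vertex $z \in Q \cap A \subseteq A \cap X$. By quasiconvexity $A \cap X$ is a connected subgraph of the tree $X$, hence a subtree and in particular convex, so $[y,z]_X \subseteq A \cap X$. Because $p$ is the projection of $y$ onto $Q$ and $z \in Q$, the tree geodesic splits as $[y,z]_X = [y,p]_X \cup [p,z]_X$, and therefore $p \in [y,z]_X \subseteq A$.

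To prove the second assertion I plan to show that the tree-projection $p$ minimizes the cone-off distance from $y$ to points of $Q$. The mechanism is a monotonicity statement extracted from Proposition~\ref{pro:geodesic}: if $P_1 \subseteq P_2$ are nested tree-geodesics, then the $\cone{X}$-distance between the endpoints of $P_1$ is at most that between the endpoints of $P_2$, since the insulator edges of $P_1$ form a subset of those of $P_2$ and any family of geodesics covering the non-insulator edges of $P_2$ also covers those of $P_1$. Applying this to $[y,p]_X \subseteq [y,q]_X = [y,p]_X \cup [p,q]_X$ for an arbitrary vertex $q \in Q$ gives $\dist(y,p) \le \dist(y,q)$. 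Any geodesic of $\cone{X}$ reaching $c$ arrives along an edge $[q,c]$ of length $r_0$ with $q \in Q$, so $\dist(y,c) = r_0 + \min_{q\in Q}\dist(y,q) = r_0 + \dist(y,p)$; concatenating a cone-off geodesic $[y,p]$ with the edge $[p,c]$ then produces a path of length $\dist(y,c)$, that is, a geodesic of the required form.

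For the last assertion I may assume $p' \ne p$. From the second assertion and the hypothesis that $[y,p']\cup[p',c]$ is geodesic I obtain $\dist(y,p')=\dist(y,p)$; since $[y,p']_X = [y,p]_X \cup [p,p']_X$ adds only edges lying inside $Q$ (hence no insulator edges), Proposition~\ref{pro:geodesic} forces the minimal covering numbers of $[y,p]_X$ and of $[y,p']_X$ to agree, say both equal to $m$. Fixing a minimal covering $Q_1,\dots,Q_m$ of the non-insulator edges of $[y,p']_X$, a counting step shows that each $Q_i$ must already cover a non-insulator edge of the subsegment $[y,p]_X$ — otherwise the $Q_i$ meeting $[y,p]_X$ would cover it with fewer than $m$ geodesics, contradicting minimality — and consequently $Q \notin \{Q_1,\dots,Q_m\}$, because $[y,p]_X \cap Q = \{p\}$ contains no edge. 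Finally, the edge of $[p,p']_X$ incident to $p'$ is covered by some $Q_{i_0} \ne Q$ that also covers an edge $e_1 \subset [y,p]_X$; convexity of geodesics in the tree forces $Q_{i_0}$ to contain the tree geodesic from $e_1$ to $p'$, which in turn contains $[p,p']_X$. Hence $\dist_X(p,p') = \diam_X([p,p']_X) \le \diam_X(Q_{i_0}\cap Q) \le \Delta$, the last inequality being the definition of $\Delta$ together with $Q_{i_0}\ne Q$.

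The first two assertions are essentially formal, resting on convexity of subtrees and on the monotonicity of cone-off length under nested subsegments. The main obstacle is the third: the work lies in upgrading the metric equality $\dist(y,p')=\dist(y,p)$ into combinatorial rigidity of the minimal coverings, and the decisive point is the counting argument excluding $Q$ itself from any minimal covering, so that the uniform overlap bound $\Delta$ can be brought to bear.
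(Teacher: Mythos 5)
Your proof is correct and follows essentially the same route as the paper's: convexity of the subtree $A\cap X$ for $p\in A$, the nested-segment monotonicity extracted from Proposition~\ref{pro:geodesic} for the geodesic $[y,p]\cup[p,c]$, and the existence of a geodesic $Q'\neq Q$ containing $[p,p']_X$ to get $d_X(p,p')\le\Delta$. Your counting argument in the third part simply fills in the details that the paper's final sentence leaves implicit.
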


\begin{proof}
Any geodesic from $y$ to $c$ is of the form $[y,q] \cup [q,c]$ with $q \in Q$.
Since $[y,p]_X$ is a subsegment of $[y,q]_X$, any covering family of the non insulator edges in $[y,q]_X$ also is a covering family for $[y,p]_X$, and by Proposition \ref{pro:geodesic} we get $\dist(y,p) \le \dist(y,q)$.
So $\dist(y,p) = \dist(y,q)$, and $[y,p] \cup [p,c]$ is geodesic.
Let $q'$ be any vertex in $Q \cap A$.
Again $[y,p]_X$ is a subsegment of $[y,q']_X$, and by quasiconvexity of $A$ we get $p \in A$. Moreover, let $p' \in Q$ such that there exists a geodesic $[y,c]$ of the form $[y,p'] \cup [p',c]$. Consequently, $\dist(y,p')=\dist(y,p)$ and Proposition \ref{pro:geodesic} implies that there exists a geodesic $Q'$ different from $Q$ containing both $p$ and $p'$. Consequently $\dist(p,p')\leq \Delta$.
\end{proof}

\begin{remark}\label{rmq:quasiconvexity}
Lemma \ref{lem:xqc} justifies why we call quasiconvex a subgraph connected such that its intersection with $X$ is connected. It is indeed $\Delta$-quasiconvex in the sense of \cite[Definition 3.3]{DGO}. 
If moreover we add the condition that the subgraph is complete then it is $r_0$-quasiconvex.

\end{remark}

\begin{definition}[{Windmill, \cite[Definition 5.11]{DGO}}]
Assume Set-up \ref{setup:family}. A \emph{windmill} is a subgraph $W \subset \cone X$ satisfying
\begin{enumerate}[(W1)]
\item\label{WM_1}
$W$ is quasiconvex, saturated and complete;
\item\label{WM_adjacent}
For any $Q \in \Ql$ adjacent to $W$, we have $\diam_X(Q \cap W)\leq 2\Delta$;
\item\label{WM_free_product}
$G_W$ is a free product of some groups among the $G_c$, $c \in C_W$;
\item\label{WM_length}
Every non-trivial element $f$ in $G_W$ is loxodromic for the action on $X$, with translation length $\lX(f) \ge \lX(g^n)$, and equality if and only if $f$ is conjugate to $g^n$.
\end{enumerate}
\end{definition}

\begin{example} \label{exple:1st_windmill}
Let $c \in \mathcal{C}$ be an apex with associated geodesic $Q$.
Then $W = \cone Q$ is a windmill, with $G_W = G_c$.
\end{example}

The main result we want to prove about windmills is the following:

\begin{proposition}[Growing windmills, {\cite[Proposition 5.12]{DGO}}]
\label{pro:growing}
Assume Set-Up \textup{\ref{setup:family}}.
For any windmill $W \subsetneq \cone X$, there exists a windmill $W'$ strictly containing $W$.
\end{proposition}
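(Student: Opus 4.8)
The plan is to grow $W$ by attaching a single new orbit of cones. First I would check that a proper windmill always has room to grow: since $W$ is complete, if $W \cap X = X$ then every apex lies in $W$ and $W = \cone X$, so $W \subsetneq \cone X$ forces $W \cap X \subsetneq X$. I would then pick an edge $e \subset X$ with exactly one endpoint in $W$. If $e$ is an insulator edge I simply adjoin it and re-complete and re-saturate: this enlarges $W$ without changing $G_W$, and the windmill axioms are checked directly — this is the easy subcase. Otherwise $e$ lies on some $Q_0 \in \Ql$, and completeness shows $c_{Q_0} \notin W$ (else $\cone Q_0 \subset W \ni e$), so $Q_0$ is adjacent to $W$. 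Writing $c_0 = c_{Q_0}$ and $H = G_{c_0} = \langle h \rangle$ with $h$ loxodromic of axis $Q_0$ and $\l_X(h) = \l_X(g^n) =: L$, I would set $G' = \langle G_W, H \rangle$ and define $W' = \Sat(W \cup \cone Q_0)$, completed by adjoining full cones; concretely $W'$ is the windmill $W$ with new ``blades'' $\gamma \cdot \cone Q_0$ glued on.

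The heart of the proof, and the main obstacle, is the algebraic structure of $G'$, i.e. properties (W3) and (W4). Here I would run a ping-pong argument adapted to the cone-off. The decisive geometric input is Lemma \ref{lem:xqc} combined with the adjacency bound (W2) for $W$: every $y \in W \cap X$ projects onto $Q_0$ inside the segment $J = Q_0 \cap W$, whose $X$-diameter is $\le 2\Delta$, whereas each non-trivial element of $H$ translates along $Q_0$ by at least $L > 7\Delta$. Hence the translates $h^m J$ occupy pairwise disjoint slots of $Q_0$, and a reduced word $w = a_1 h^{m_1} \cdots a_k h^{m_k}$, alternating between $G_W \setminus \{\id\}$ and $H \setminus \{\id\}$, is forced to trace a path in $\cone X$ that crosses the apices of successive blades without backtracking. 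The standard ping-pong criterion then gives $G' = G_W * H$, which is property (W3). For the length estimate (W4) I would argue by induction on the number of syllables of $w$: using Proposition \ref{pro:geodesic} and Proposition \ref{pro:contains_c}, the $X$-shadow of one period of this path is a local $X$-geodesic that gains at least $L$ whenever a blade is fully traversed, so $\l_X(w) \ge L$. The slack $7\Delta < L$ is exactly what rules out cancellation in the overlaps, each of size at most $3\Delta$. Equality $\l_X(w) = L$ can occur only for a one-syllable word, namely some $a \in G_W$ with $\l_X(a) = L$ (already conjugate to $g^n$ by (W4) for $W$) or $h^{\pm 1}$; in either case $w$ is a $G'$-conjugate of $g^n$, which gives the equality clause.

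Finally I would verify the geometric axioms (W1) and (W2) for $W'$ and the strict inclusion. Saturation is built into the definition and completeness is arranged by adjoining full cones, so (W1) reduces to quasiconvexity; this holds because each blade $\gamma \cone Q_0$ meets the rest of $W'$ along the non-empty segment $\gamma J$, keeping $W'$ and $W' \cap X$ connected. For the adjacency bound (W2) I would use that distinct members of $\Ql$ overlap in $X$-diameter at most $\Delta$: any $Q \in \Ql$ adjacent to $W'$ meets $W'$ essentially within a single blade, and combining the bound for $W$ with $\diam_X(Q \cap Q_0) \le \Delta$ keeps $\diam_X(Q \cap W') \le 2\Delta$. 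Strict growth is immediate, since $W' \cap X \supseteq (W \cap X) \cup Q_0 \supsetneq W \cap X$ (as $\diam_X(Q_0 \cap W) \le 2\Delta$ while $Q_0$ is a line). Throughout, the genuine difficulty is keeping the constant bookkeeping consistent with $\l_X(g^n) > 7\Delta$; once the slot-disjointness is set up, the remaining checks are routine if lengthy.
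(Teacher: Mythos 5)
Your proof breaks at axiom \ref{WM_adjacent}, and the failure is structural, not a matter of bookkeeping: growing $W$ by a \emph{single} orbit of blades, $W' = \Sat(W \cup \cone Q_0)$, does not preserve the adjacency bound $2\Delta$. Nothing in Set-Up \ref{setup:family} prevents (when $\Delta > 0$) the existence of a geodesic $Q_1 \in \Ql$, outside the orbit of $Q_0$, which is adjacent to $W$ with $\diam_X(Q_1 \cap W) = 2\Delta$, whose intersection with $W$ ends at an endpoint of $J = Q_0 \cap W$, and which then runs along $Q_0$ for a further length $\Delta$ beyond $J$. All of this is compatible with \ref{WM_adjacent} for $W$ and with $\diam_X(Q_1 \cap Q_0) \le \Delta$. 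Since $\cone Q_1 \not\subset W'$, this $Q_1$ is still adjacent to $W'$, yet $Q_1 \cap W'$ is a connected segment of diameter at least $3\Delta > 2\Delta$. So your claimed verification cannot be completed, and indeed its arithmetic is wrong as stated: the contribution $\le 2\Delta$ from a translate of $W$ and the contribution $\le \Delta$ from a blade can be adjacent subsegments of $Q_1$, so they add rather than absorb each other. Relaxing \ref{WM_adjacent} to $3\Delta$ does not help either: the same mechanism degrades the constant at every growth step, while the constant must stay fixed relative to $\lX(g^n) > 7\Delta$ for Proposition \ref{pro:by c} (hence for the whole Zorn argument) to go through. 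This is exactly why the paper attaches \emph{all} adjacent cones at once, $W' = \Sat\bigl(W \cup \bigcup_{c \in \Adj(W)} \cone Q_c\bigr)$: with that choice, any $Q$ adjacent to $W'$ is automatically disjoint from the entire orbit of $W$ (if $Q$ met some $\gamma W$ with $\gamma \in G_{W'}$, then $\gamma^{-1}Q$ would be adjacent to $W$, forcing $\cone(\gamma^{-1}Q) \subset W'$ and hence $c_Q \in W'$, a contradiction), so $Q \cap W'$ is covered by blades alone, and the overlap $> 2\Delta$ is then excluded using Corollary \ref{cor:adjQ1Q2} and Lemma \ref{lem:distWhW}. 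None of that is available in your single-blade construction.

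The rest of your outline is sound in spirit and close to the paper's. Your insulator-edge case works (one can check that an axis containing the outer endpoint of an insulator edge and meeting $W$ would have to contain that edge, which is impossible), and your ping-pong for $G' = G_W \ast H$ can be made rigorous in the single-blade setting: the facts that every point of $W$ projects into $J$ (Lemma \ref{lem:xqc}), that $\diam_X J \le 2\Delta$, and that nontrivial elements of $G_W$ are loxodromic of length $> 7\Delta$ with axis in $W$ (axiom \ref{WM_length} and Remark \ref{remark_axe-dans-W}) do give the disjoint-slot dynamics you describe; the paper's bicolored tree $T_W$ with Bass--Serre theory (Proposition \ref{pro:bass serre}) is the many-factor version of the same idea. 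But once the construction is repaired by attaching all adjacent cones, a two-set ping-pong no longer suffices: you need a simultaneous free product over all $c \in \Adj(W)$, i.e., precisely the separation statements (Lemma \ref{lem:through_ci}, Corollary \ref{cor:tree_windmill}) that make $T_W$ a tree. So the gap is not merely expository; the argument has to be reorganized around the all-adjacent-cones windmill.
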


\begin{remark}
	In fact the proof will show that
	\[
	G_{W'} = G_W \ast (\ast_{c \in S} G_c)
	\]
	for some set $S \subset \Adj(W)$.
\end{remark}

We will need the following two technical facts about windmills.

\begin{proposition} \label{pro:by c}
Assume Set-up \textup{\ref{setup:family}}.
Let $W$ be a windmill, and $c$ the apex of an adjacent geodesic $Q$.
Let $x_1, x_2\in X$ such that there exist geodesic segments between $x_i$ and $c$ meeting $W \cap X$, $i = 1,2$.
Then:
\begin{enumerate}
\item For any $f \in G_c\setminus\{\id\}$, any geodesic segment between $x_1$ and $f(x_2)$ contains~$c$. 
\item In particular, for any choice of geodesic segments $[x_1,c]$ and $[c,f(x_2)]$, the concatenation $[x_1,c]\cup[c,f(x_2)]$ also is geodesic.
\end{enumerate}
\end{proposition}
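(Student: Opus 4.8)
The plan is to deduce part (1) from Proposition~\ref{pro:contains_c}: I will show that the tree-segment $P = [x_1, f(x_2)]_X$ satisfies $\diam_X(P \cap Q) > 3\Delta$, which by that proposition forces every geodesic of $\cone X$ between $x_1$ and $f(x_2)$ to pass through the apex $c$. The two inputs that make this work are that $f \in G_c$ fixes $c$ and translates along $Q = \axe(f)$ by at least $\lX(g^n) > 7\Delta$, together with the adjacency bound $\diam_X(Q \cap W) \le 2\Delta$ coming from the windmill condition~\ref{WM_adjacent}.

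First I would set up projections in the tree. Write $p_1$ (resp.\ $p_2$) for the projection of $x_1$ (resp.\ $x_2$) on $Q$ in $X$. Since $f$ preserves $Q$, projection is $f$-equivariant, so the projection of $f(x_2)$ on $Q$ is $f(p_2)$. In a tree the bridge $[p_1, f(p_2)]_X$ lies both in $Q$ and in $[x_1, f(x_2)]_X$, hence $\diam_X(P \cap Q) \ge d_X(p_1, f(p_2))$. As $f\neq\id$ translates $p_2\in Q$ by a positive multiple of $\lX(g^n)$, the triangle inequality gives
\[
d_X(p_1, f(p_2)) \ge d_X(p_2, f(p_2)) - d_X(p_1, p_2) > 7\Delta - d_X(p_1, p_2),
\]
so it suffices to prove $d_X(p_1, p_2) \le 4\Delta$.

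This last estimate is where the windmill enters, and I expect it to be the main obstacle, since it is what forces the hypothesis that the geodesics $[x_i,c]$ meet $W\cap X$. For each $i$, choose such a geodesic, meeting $W\cap X$ at a vertex $w_i$; by Proposition~\ref{pro:geodesic} it reaches $c$ through a point $q_i\in Q$, and $w_i$ then lies on the tree-segment $[x_i,q_i]_X=[x_i,p_i]_X\cup[p_i,q_i]_X$. The \emph{moreover} estimate of Lemma~\ref{lem:xqc}, whose argument is purely tree-theoretic and applies to the projection $p_i$ of $x_i$, gives $d_X(p_i,q_i)\le\Delta$; and Lemma~\ref{lem:xqc} applied to $w_i\in W$ shows its projection $\pi_i$ on $Q$ lies in $W$. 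A short case analysis on whether $w_i$ lies on $[x_i,p_i]_X$ (then $\pi_i=p_i$) or on $[p_i,q_i]_X\subset Q$ (then $\pi_i=w_i$ and $d_X(p_i,\pi_i)\le d_X(p_i,q_i)$) yields in either case $\pi_i\in W\cap Q$ and $d_X(p_i,\pi_i)\le\Delta$. Since $Q$ is adjacent to $W$, condition~\ref{WM_adjacent} gives $d_X(\pi_1,\pi_2)\le 2\Delta$, whence
\[
d_X(p_1, p_2) \le d_X(p_1, \pi_1) + d_X(\pi_1, \pi_2) + d_X(\pi_2, p_2) \le \Delta + 2\Delta + \Delta = 4\Delta .
\]
Combined with the previous display this gives $d_X(p_1,f(p_2))>3\Delta$, and Proposition~\ref{pro:contains_c} proves part (1).

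Finally, part (2) is a formal consequence: since every geodesic from $x_1$ to $f(x_2)$ contains $c$, we have $\dist(x_1,f(x_2))=\dist(x_1,c)+\dist(c,f(x_2))$, so the concatenation of any geodesic $[x_1,c]$ with any geodesic $[c,f(x_2)]$ is a path of length $\dist(x_1,f(x_2))$, hence geodesic. The delicate point throughout is the calibration of constants: the threshold $7\Delta$ fixed in Set-Up~\ref{setup:family} is exactly what makes the $4\Delta$ estimate leave the margin $3\Delta$ demanded by Proposition~\ref{pro:contains_c}.
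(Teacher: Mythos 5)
Your overall strategy coincides with the paper's (reduce to Proposition \ref{pro:contains_c} by producing points of $W\cap Q$ close to the projections $p_i$, then use condition \ref{WM_adjacent} and the threshold $\lX(g^n)>7\Delta$), and most steps are sound, but there is a genuine gap at the assertion that $w_i$ ``lies on the tree-segment $[x_i,q_i]_X$''. Proposition \ref{pro:geodesic} does not give this, and it is false in general: that proposition writes a geodesic of $\cone X$ as a concatenation of insulator edges and jumps $[a_j,c_j]\cup[c_j,b_j]$ with $[a_j,b_j]_X\subset Q_j$, but it does not force the junction points between consecutive jumps to lie on the tree geodesic. Concretely, suppose $[x,w]_X\subset Q_1$, $[w,y]_X\subset Q_2$, this cover is minimal with no insulator edges, and $z\in Q_1\cap Q_2$ lies off $[x,y]_X$ (the intersection $Q_1\cap Q_2$ may stick out of $[x,y]_X$ at $w$); then the path through $x$, $c_{Q_1}$, $z$, $c_{Q_2}$, $y$ has length $4r_0=\dist(x,y)$, hence is a geodesic of $\cone X$ passing through the point $z\in X$ with $z\notin[x,y]_X$. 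Since the hypothesis of the proposition only guarantees that the chosen geodesic $[x_i,c]$ meets $W\cap X$ \emph{somewhere}, the point $w_i$ may be exactly such a junction point, and then neither branch of your dichotomy ($w_i\in[x_i,p_i]_X$ or $w_i\in[p_i,q_i]_X$) applies.

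This matters because your constants have no slack. The best general statement is that every point of $X$ on a geodesic of $\cone X$ lies within $\Delta$ of the tree geodesic (the deviation segment is contained in an intersection $Q_j\cap Q_{j+1}$), and repairing your case analysis with this correction only yields $d_X(p_i,\pi_i)\le 2\Delta$, hence $d_X(p_1,p_2)\le 6\Delta$ and $d_X(p_1,f(p_2))>7\Delta-6\Delta=\Delta$, short of the bound $>3\Delta$ required by Proposition \ref{pro:contains_c}. The correct repair, which is exactly the paper's argument, is to never locate $w_i$ inside the tree: apply Lemma \ref{lem:xqc} to $w_i\in W\cap X$ to get that its projection $\pi_i$ on $Q$ lies in $W\cap Q$ and that $[w_i,\pi_i]\cup[\pi_i,c]$ is geodesic; concatenating this with the subsegment $[x_i,w_i]$ of your chosen geodesic gives a geodesic from $x_i$ to $c$ of the form $[x_i,\pi_i]\cup[\pi_i,c]$, and then the ``moreover'' part of Lemma \ref{lem:xqc} (the same tree-theoretic uniqueness of entry points, applied at $x_i$) gives $d_X(p_i,\pi_i)\le\Delta$ outright, with no case analysis. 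With that substitution your bound $d_X(p_1,p_2)\le 4\Delta$ is restored, and the remainder of your proof, including the derivation of part (2), is correct and agrees with the paper.
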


\begin{proof}
\begin{enumerate}
\item
For $i = 1,2$, by assumption there exists a geodesic segment $[x_i,y_i] \cup [y_i, c]$ with $y_i \in W \cap X$.
By Lemma \ref{lem:xqc}, there exists a geodesic segment of the form $[y_i, p_i] \cup [p_i, c]$, where $p_i \in Q \cap W$ is the projection of $y_i$ on $Q$ in the tree $X$.
By Property \ref{WM_adjacent}, we have $d_X(p_1, p_2) \le 2\Delta$.
Then
\[
d_X(p_1, f(p_2)) \ge d_X(p_2, f(p_2)) -2\Delta = \lX(g^n) - 2\Delta > 7\Delta - 2\Delta = 5\Delta.
\]
Denote by $\bar{x_i}$ the projection of $x_i$ on $Q$ in the tree $X$. 
Because $[x_i,\bar{x_i}]_X\subset [x_i,p_i]_X$ and $[x_i,p_i]\cup[p_i,c]$ is geodesic, by Proposition \ref{pro:geodesic} there is also a geodesic of the form $[x_i,\bar{x_i}]\cup[\bar{x_i},c]$, and either $p_i=\bar{x_i}$ or they belong to a geodesic $Q'\in \Ql\setminus \{Q\}$. Consequently, $\dist_X(\bar{x_i},p_i)\leq \Delta$, and 
\[
\diam_X(Q\cap [x_1,f(x_2)]_X) = d_X(\bar x_1, f(\bar x_2)) \geq d_X(p_1,f(p_2))-2\Delta  \geq 3\Delta.
\] 
We conclude by Proposition \ref{pro:contains_c}.
\item
For any choice of geodesic segments $[x_1,c]$ and $[c,f(x_2)]$, the sum of their lengths is the same than the geodesic $[x_1, f(x_2)]$ passing through $c$ found in the previous point, so $[x_1,c]\cup [c,f(x_2)]$ also is geodesic.
\qedhere
\end{enumerate}
\end{proof}

\begin{lemma} \label{lem:adj c1 c2}
Let $W$ be a windmill, and $c_1, c_2 \in \Adj(W)$ with associated geodesics $Q_1, Q_2$.
Then for $i =1,2$ there exist $q_i \in Q_i \cap W$ such that 
$[c_1, q_1] \cup [q_1, q_2] \cup [q_2, c_2]$ is geodesic. 
\end{lemma}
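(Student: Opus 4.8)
The plan is to take $q_1,q_2$ to be the endpoints of the \emph{bridge} between $Q_1$ and $Q_2$ in the tree $X$: the pair of points $\pi_1\in Q_1$, $\pi_2\in Q_2$ realizing $\dist_X(Q_1,Q_2)$, so that by standard tree geometry $[\pi_1,\pi_2]_X$ is a subsegment of $[a,b]_X$ for every $a\in Q_1$, $b\in Q_2$ (the bridge degenerating to a single point when $Q_1\cap Q_2\neq\emptyset$). First I would record that these points lie in $W$. Since $W$ is a windmill it is quasiconvex, so $W\cap X$ is a connected subgraph of the tree $X$, hence a convex subtree. As $c_1,c_2\in\Adj(W)$ we may choose vertices $a_1\in Q_1\cap W$ and $a_2\in Q_2\cap W$; convexity gives $[a_1,a_2]_X\subset W$, and since this segment contains the bridge we get $\pi_1\in Q_1\cap W$ and $\pi_2\in Q_2\cap W$. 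In a simplicial tree the projection of a vertex onto a geodesic is again a vertex, so $\pi_1,\pi_2$ are genuine vertices and each $[c_i,\pi_i]$ is a single cone edge of length $r_0$.

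Next I would compute $\dist(c_1,c_2)$ in $\cone X$. The only neighbours of the apex $c_i$ are the vertices of $Q_i$, each at distance $r_0$, so any geodesic from $c_1$ to $c_2$ has the form $[c_1,a]\cup[a,b]\cup[b,c_2]$ with $a\in Q_1$, $b\in Q_2$; this yields
\[
\dist(c_1,c_2)=2r_0+\min_{a\in Q_1,\,b\in Q_2}\dist(a,b).
\]
The heart of the argument is to show this minimum is attained at $(\pi_1,\pi_2)$, which is where I would invoke Proposition \ref{pro:geodesic}. Because $[\pi_1,\pi_2]_X$ is a subsegment of $[a,b]_X$, its insulator edges and its non-insulator edges are subsets of those of $[a,b]_X$; hence both the insulator count $m'$ and the minimal covering number $m$ can only decrease, giving $\dist(\pi_1,\pi_2)\le\dist(a,b)$. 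Since trivially $\min_{a,b}\dist(a,b)\le\dist(\pi_1,\pi_2)$, we conclude $\dist(c_1,c_2)=2r_0+\dist(\pi_1,\pi_2)$.

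Finally I would assemble the path. The concatenation $[c_1,\pi_1]\cup[\pi_1,\pi_2]\cup[\pi_2,c_2]$ has length $r_0+\dist(\pi_1,\pi_2)+r_0=\dist(c_1,c_2)$, so it is geodesic as soon as it is a simple path. The only point to verify is that the middle geodesic $[\pi_1,\pi_2]$ avoids $c_1$ and $c_2$. Since the bridge meets $Q_1$ only at $\pi_1$ and $Q_2$ only at $\pi_2$, it shares no edge with $Q_1$ or $Q_2$, so a minimal covering family of its non-insulator edges can be chosen among geodesics distinct from $Q_1,Q_2$; the geodesic furnished by Proposition \ref{pro:geodesic}\ref{geodesic2} then visits only apices other than $c_1,c_2$. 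Taking $q_1=\pi_1$, $q_2=\pi_2$ proves the lemma.

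The step I expect to be the main obstacle is the minimality claim $\dist(\pi_1,\pi_2)=\min_{a,b}\dist(a,b)$: one must pass from the tree metric, where the bridge is manifestly shortest, to the cone-off metric, and it is precisely the monotonicity of $\dist$ under taking subsegments, read off from Proposition \ref{pro:geodesic}, that makes this transition go through. A minor nuisance is the degenerate case $Q_1\cap Q_2\neq\emptyset$, where $\pi_1=\pi_2$ is a single vertex of $Q_1\cap Q_2\cap W$, so $q_1=q_2$ and the path reduces to $[c_1,q_1]\cup[q_1,c_2]$ of length $2r_0$.
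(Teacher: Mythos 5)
Your proof is correct and takes essentially the same route as the paper's: both choose $q_1,q_2$ to be the bridge between $Q_1$ and $Q_2$ (a common point of $Q_1\cap Q_2\cap W$ in the intersecting case), use connectedness of $W\cap X$ to see that the bridge lies in $W$, and deduce geodesicity from the monotonicity $\dist(q_1,q_2)\le \dist(a,b)$ for $a\in Q_1$, $b\in Q_2$, read off from Proposition \ref{pro:geodesic}\ref{geodesic1}. Your additional steps (the formula $\dist(c_1,c_2)=2r_0+\min_{a,b}\dist(a,b)$ and the check that the middle geodesic avoids $c_1,c_2$) are harmless elaborations of what the paper leaves implicit.
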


\begin{proof}
If $Q_1 \cap Q_2 \neq \emptyset$, then by \ref{WM_1} we can take $q_1 = q_2 \in Q_1 \cap Q_2 \cap W$ (in a tree any collection of pairwise intersecting subtrees admits a common point).
If $Q_1 \cap Q_2 = \emptyset$ we take $q_1, q_2$ the endpoints of the unique segment from $Q_1$ to $Q_2$ in $X$. 
By connectedness of $W \cap X$, the assumptions $Q_i \cap W \neq \emptyset$ imply $[q_1, q_2]_X \subset W$.
Let $p_i\in Q_i$. By definition of the $q_i$'s, $[q_1,q_2]_X\subset [p_1,p_2]_X$ so any covering of $[p_1,p_2]_X$ by geodesics $Q\in \Ql$ is also a covering of $[q_1, q_2]_X$. 
Hence by Proposition \ref{pro:geodesic}\ref{geodesic1}, $\dist(q_1,q_2)\leq \dist(p_1,p_2)$, consequently $[c_1,q_1]\cup[q_1,q_2]\cup [q_2,c_2]$ is geodesic. 
\end{proof}

\subsection{The tree $T_W$ of a windmill}
\label{sec:TW}
In this section we gather some preliminary material before proving Proposition \ref{pro:growing} and Theorem \ref{thm:SCT}.
Assume Set-Up \ref{setup:family}, let $W$ be a windmill, and assume $\Adj(W) \neq \emptyset$.
We set
\[
W'= \Sat \left( W\cup \bigcup_{c \in \Adj(W)} \cone Q_c \right).
\]
Note that $W'$ will correspond to the larger windmill of Proposition \ref{pro:growing} in the case where $\Adj(W)\neq \emptyset$.

The group $G_{W'}$ is the group generated by $G_W$ and all $G_c$, where $c \in \Adj(W)$. 

We define a bicolored graph $T_W$ as follow.
The two set of vertices are indexed respectively by $\{gc \mid g \in G_{W'}, c \in \Adj(W)\}$ and by $\{gW \mid g \in G_{W'}\}$.
We put an edge between two vertices $v_1, v_2$ if there exist $g \in G_{W'}$ and $c \in \Adj(W)$ such that $v_1 = gc$ and $v_2 = gW$.
There is a natural action of $G_{W'}$ on $T_W$ by left translation.

\begin{setup} \label{setup:path}
A \emph{path} from $W$ in the graph $T_W$ is a sequence of vertices of the form:
\[
W, c_1, g_1 W, c_2, g_2 g_1 W, \dots, c_m, g_m \dots g_1 W,
\]
where for each $i =1, \dots, m$, $c_i$ is adjacent to $g_{i-1} \dots g_1 W$ (with the convention $g_0 = 1$), $g_i \in G_{c_i} \setminus \{1\}$, and $c_i \neq c_{i-1}$ for all $2 \le i \le m$.
The conditions $g_i \neq 1$ and $c_i \neq c_{i-1}$ insure that the path is locally injective.
Observe that by construction each $c_i$ is also adjacent to $g_{i} \dots g_1 W$.
\end{setup}

\begin{lemma} \label{lem:distWhW}
Given a path as in Set-Up \textup{\ref{setup:path}}, for each $1 \le k \le m$ we have
\[\dist_X(W,g_k g_{k-1}\dots g_2 g_1W) > k \lX(g^n) - (3k-1)\Delta.\]
\end{lemma}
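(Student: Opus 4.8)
The plan is to argue by induction on $k$, tracking how any $X$-geodesic from $W$ to $g_k\cdots g_1 W$ is forced to cross $k$ long ``bridges'', one for each apex $c_i$ of the path. Write $h_i = g_i\cdots g_1$ and $A_i = h_i W \cap X$ (with $A_0 = W\cap X$), each a subtree of $X$. For each $i$ the apex $c_i$ is adjacent to both $h_{i-1}W$ and $h_iW$ (Set-Up \ref{setup:path}), so its axis $Q_i$ meets $A_{i-1}$ in the segment $P_i^- := Q_i\cap A_{i-1}$ and meets $A_i$ in $P_i^+ := Q_i\cap A_i$. Since $g_i\in G_{c_i}$ stabilizes $Q_i$ we have $P_i^+ = g_i P_i^-$, and $g_i$ translates $Q_i$ by at least $\lX(g^n)$; combined with the windmill bound $\diam_X P_i^\pm \le 2\Delta$ from property \ref{WM_adjacent}, this shows $P_i^-$ and $P_i^+$ are disjoint subsegments of $Q_i$ whose gap along $Q_i$ is at least $\lX(g^n) - 2\Delta$. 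In particular $A_{i-1}\cap A_i=\emptyset$ and, as both subtrees meet $Q_i$, their bridge $\beta_i$ (the shortest segment joining them) runs along $Q_i$ with $X$-length $\ge \lX(g^n)-2\Delta$.

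The core of the argument is to show that $A_0, A_1, \dots, A_k$ sit in convex position, so that the $X$-geodesic between \emph{any} $a\in A_0$ and \emph{any} $a'\in A_k$ runs monotonically through $A_1, \dots, A_{k-1}$ and therefore contains each bridge $\beta_1,\dots,\beta_k$. Granting this, $\dist_X(W, h_kW) = \min_{a,a'} d_X(a,a')$ is at least the total $X$-length of the bridges minus the overlap incurred when two consecutive bridges share part of an edge-path; since $\beta_i\subset Q_i$, $\beta_{i+1}\subset Q_{i+1}$, and $c_i\ne c_{i+1}$ forces $Q_i\ne Q_{i+1}$, the definition of $\Delta$ gives $\diam_X(Q_i\cap Q_{i+1})\le\Delta$, so at most $\Delta$ is lost at each of the $k-1$ junctions. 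This yields $\dist_X(W,h_kW) \ge k(\lX(g^n)-2\Delta) - (k-1)\Delta = k\lX(g^n) - (3k-1)\Delta$, and the inequality is strict because the connecting paths inside the $A_i$ contribute nonnegatively and distances in a simplicial tree are integral. The no-backtracking statement at a single junction is exactly what Lemma \ref{lem:adj c1 c2}, applied to the windmill $h_iW$ with its two adjacent apices $c_i,c_{i+1}$, provides, while Propositions \ref{pro:by c} and \ref{pro:contains_c} pin the geodesic through $c_i$ once a piece of $Q_i\cap A_{i-1}$ exceeds $3\Delta$.

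The step I expect to be the main obstacle is precisely this convex-position claim: a priori an intermediate subtree $A_{i-1}$ could stick out from the geodesic joining $A_0$ to $A_k$, in which case the bridges would cancel pairwise and the estimate would collapse. Controlling this is where the hypothesis $\lX(g^n) > 7\Delta$ is essential: each bridge has length $> 5\Delta$, which dominates both the $\le\Delta$ overlap of consecutive axes and the $\le 2\Delta$ windmill overlaps, so the geodesic cannot fold back far enough to skip an intermediate $A_i$. I would therefore set up the induction so that the inductive hypothesis records not only the distance bound but also the position of the exit gate of the geodesic on $Q_k$, namely that it leaves through $P_k^+$ on the side away from $A_{k-1}$; this is the bookkeeping that makes the concatenation with the next bridge $\beta_{k+1}$ legitimate and lets the per-step gain $\lX(g^n)-3\Delta>0$ accumulate.
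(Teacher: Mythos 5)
Your skeleton is the same as the paper's: the bridge $\beta_i$ between $A_{i-1}=g_{i-1}\cdots g_1W\cap X$ and $A_i=g_i\cdots g_1W\cap X$ lies on $Q_i$ and has length at least $\lX(g^n)-2\Delta$, and the junction losses are charged to the overlaps $Q_i\cap Q_{i+1}$; these per-step estimates (disjointness of $P_i^-$ and $P_i^+$, the bridge bound, $\diam_X(Q_i\cap Q_{i+1})\le\Delta$) are correct and are exactly the ones in the paper's proof. The problem is that everything then hinges on your ``convex-position'' claim --- that the $X$-geodesic between any point of $A_0$ and any point of $A_k$ passes through each intermediate $A_i$ and contains each bridge $\beta_i$ --- and this claim is never proved: you yourself flag it as the main obstacle and end with a description of the induction you \emph{would} run. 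Moreover, the tools you propose for that induction cannot supply it: Lemma~\ref{lem:adj c1 c2} and Propositions~\ref{pro:by c} and~\ref{pro:contains_c} are statements about geodesics of the cone-off $\cone X$ passing through apices, and none of them yields a lower bound on a distance in the tree $X$, which is what this lemma asserts.

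This gap is genuine rather than routine, because the convex-position claim is false under the hypotheses you invoke (which are also the only ones the paper's own two-line concatenation invokes). Nothing forbids the two bridges at the junction $A_i$ from emanating from the same point $v_i=u_{i+1}$ of $A_i$ and sharing their initial subsegment $[v_i,w]\subset Q_i\cap Q_{i+1}$ of length $\Delta$: the overlap of the two axes may stick out of $A_i$. In that configuration the geodesic from $A_{i-1}$ to $A_{i+1}$ cuts the corner at $w$, missing $A_i$ entirely and missing the segment $[w,v_i]$ of \emph{both} bridges, so the loss at that junction is twice the overlap, up to $2\Delta$ rather than $\Delta$; your appeal to integrality for the strict inequality is likewise a non sequitur. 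Concretely, with $k=2$, $\diam_X P_i^{\pm}=2\Delta$ and $\lX(g_i)=\lX(g^n)$, one gets $\dist_X(W,g_2g_1W)=2\lX(g^n)-6\Delta$, below the asserted bound $2\lX(g^n)-5\Delta$. So the route you describe proves at best $\dist_X(W,g_k\cdots g_1W)\ge k\lX(g^n)-(4k-2)\Delta$, and a complete proof of the statement as written would need an additional argument excluding such corner configurations (or a recalibration of the constants, e.g.\ replacing the threshold $\lX(g^n)>7\Delta$ of Set-Up~\ref{setup:family} by $8\Delta$, which the later applications can absorb). As it stands, both the key step and the claimed constant are missing from your argument.
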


\begin{proof}
We denote by $x$ and $x_1$ the endpoints of the geodesic segment in the tree $X$ joining $W\cap X$ and $g_1W\cap X$.
Because $Q_{c_1}$ is adjacent to both $W$ and $g_1W$, $x\in W\cap Q_{c_1}$ and $x_1\in Q_{c_1}\cap g_1W $. Moreover, $\dist_X(x_1,g_1x)\leq \diam(Q_{c_1}\cap g_1W)\leq 2\Delta$. Hence, we get the expected formula for $k = 1$: 
\[\dist_X(W,g_1W)\geq\lX(g^n)-2\Delta.\] 
By the same argument, for any $k$, $\dist_X(g_{k-1}\dots g_2g_1W,g_kg_{k-1}\dots g_2g_1W)\geq\lX(g^n)-2\Delta$, where this distance is realized by a subsegment of $Q_{c_k}$.
Since for each $k \ge 1$ we have $\diam(Q_{c_k}\cap Q_{c_{k+1}})<\Delta$ we obtain
\begin{align*}
\dist_X(W,g_k g_{k-1}\dots g_2 g_1W) &> k(\lX(g^n)-2\Delta) - (k-1) \Delta \\
&= k \lX(g^n) - (3k-1)\Delta.  \qedhere
\end{align*}
\end{proof}

\begin{lemma} \label{lem:through_ci}
Given a path as in Set-Up \textup{\ref{setup:path}}, let $x_0 \in W \cap X$ and $x_m \in g_m \dots g_1 W \cap X$ be two vertices.
Then any geodesic segment $[x_0, x_m]$ passes through $c_m$.
\end{lemma}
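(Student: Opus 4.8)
The plan is to prove the lemma by induction on $m$, reducing the inductive step to Proposition~\ref{pro:by c}(1) applied to the windmill $W_{m-1} := g_{m-1}\cdots g_1 W$ (which is again a windmill, being the image of $W$ under $g_{m-1}\cdots g_1 \in G_{W'}$). Writing $W_k := g_k\cdots g_1 W$ with $W_0 = W$, and setting $\tilde x := g_m^{-1} x_m \in W_{m-1}\cap X$, we have $x_m = g_m\tilde x$ with $g_m \in G_{c_m}\setminus\{\id\}$ and $c_m\in\Adj(W_{m-1})$. So if the hypotheses of Proposition~\ref{pro:by c} hold for the windmill $W_{m-1}$, its adjacent apex $c_m$, and the points $x_0,\tilde x$, then part~(1) says that every geodesic between $x_0$ and $g_m\tilde x = x_m$ contains $c_m$, which is exactly the assertion.

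To apply Proposition~\ref{pro:by c} I must exhibit geodesic segments from $x_0$ to $c_m$ and from $\tilde x$ to $c_m$, each meeting $W_{m-1}\cap X$. The second is immediate since $\tilde x\in W_{m-1}\cap X$. The first carries the real content, so I would strengthen the induction hypothesis to the conjunction of the lemma with the auxiliary statement
\[
(A_m):\quad \text{there is a geodesic } [x_0,c_m] \text{ meeting } W_{m-1}\cap X .
\]
For $m=1$ both statements are easy: $(A_1)$ holds because $x_0\in W_0\cap X$ already lies on any geodesic $[x_0,c_1]$, and the lemma for $m=1$ then follows from Proposition~\ref{pro:by c}(1) at the windmill $W_0 = W$.

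For the inductive step I would produce the geodesic required by $(A_m)$ by extending, across the windmill $W_{m-1}$, a geodesic $[x_0,c_{m-1}]$ furnished by $(A_{m-1})$. Since $c_{m-1}$ and $c_m$ are distinct apices both adjacent to $W_{m-1}$ (recall from Set-Up~\ref{setup:path} that each $c_i$ is adjacent to $W_i$, so $c_{m-1}$ is adjacent to $W_{m-1}$), Lemma~\ref{lem:adj c1 c2} gives $q_{m-1}\in Q_{c_{m-1}}\cap W_{m-1}$ and $q_m\in Q_{c_m}\cap W_{m-1}$ with $[c_{m-1},q_{m-1}]\cup[q_{m-1},q_m]\cup[q_m,c_m]$ geodesic. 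Writing $q_{m-1} = g_{m-1}\hat q$ with $\hat q \in W_{m-2}\cap X$ and applying Proposition~\ref{pro:by c}(2) to the windmill $W_{m-2}$, its adjacent apex $c_{m-1}$, and $f = g_{m-1}$ (whose hypotheses are $(A_{m-1})$ together with the trivial geodesic $[\hat q, c_{m-1}]$), the concatenation $[x_0,c_{m-1}]\cup[c_{m-1},q_{m-1}]$ is geodesic and ends at $q_{m-1}\in W_{m-1}\cap X$. Gluing this to the tail of the segment from Lemma~\ref{lem:adj c1 c2} yields a path $[x_0,c_{m-1}]\cup[c_{m-1},q_{m-1}]\cup[q_{m-1},q_m]\cup[q_m,c_m]$ from $x_0$ to $c_m$ passing through $q_{m-1},q_m\in W_{m-1}\cap X$, which gives $(A_m)$ once we know it is geodesic.

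The main obstacle is exactly this last gluing: because $\cone X$ is not a tree, the fact that the two geodesics overlap only on the single apex-edge $[c_{m-1},q_{m-1}]$ and leave $q_{m-1}$ in different directions is not by itself enough to conclude that their union is geodesic — one must rule out an apex-shortcut reaching $c_m$ more cheaply. I would settle this through the explicit description of cone-off geodesics in Proposition~\ref{pro:geodesic}: projecting to the tree, the candidate is the tree geodesic $[x_0,q_m]_X$ followed by the edge $[q_m,c_m]$, and any strictly shorter route to $c_m$ would force some $Q'\in\Ql$ to produce an anomalous long overlap with both $Q_{c_{m-1}}$ and $Q_{c_m}$. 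This is impossible by the small-cancellation bound $\lX(g^n) > 7\Delta$ of Set-Up~\ref{setup:family}, together with $\diam_X(Q_{c_{m-1}}\cap Q_{c_m})\le \Delta$ and the windmill bound $\diam_X(Q_{c_m}\cap W_{m-1})\le 2\Delta$ from \ref{WM_adjacent} — the same mechanism that drives Propositions~\ref{pro:contains_c} and~\ref{pro:by c}. Once $(A_m)$ is established, the lemma for $m$ follows from the reduction to Proposition~\ref{pro:by c}(1) above, completing the induction.
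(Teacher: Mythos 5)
Your overall architecture is sound, and it is in fact the same as the paper's: the paper also argues by induction on the index, also gets its candidate point on $Q_{c_m}$ from Lemma~\ref{lem:adj c1 c2}, and also concludes by feeding Proposition~\ref{pro:by c}(1) a single geodesic from $x_0$ to $c_m$ meeting $g_{m-1}\cdots g_1W\cap X$ --- which is exactly your statement $(A_m)$. Your preliminary steps (the reduction via $\tilde x = g_m^{-1}x_m$, and the use of Proposition~\ref{pro:by c}(2) to make $[x_0,c_{m-1}]\cup[c_{m-1},q_{m-1}]$ geodesic) are correct. The problem is the step you yourself flag as the main obstacle: it carries the entire content of the lemma, and the mechanism you propose for it would not work.

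Here is why. Any competing geodesic from $x_0$ to $c_m$ has the form $[x_0,y]\cup[y,c_m]$ with $y\in Q_{c_m}$, and since $y$ and $q_m$ both lie on $Q_{c_m}$ we have $\dist(y,q_m)\le 2r_0$; moreover the tree-geodesics $[x_0,y]_X$ and $[x_0,q_m]_X$ differ only inside $Q_{c_m}$, so they contain the same insulator edges and, by Proposition~\ref{pro:geodesic}, their cone-off lengths differ by at most one apex-jump. In other words, the only shortcut to be excluded saves exactly $2r_0<1$. Such a shortcut forces no ``anomalous long overlap'' of any $Q'\in\Ql$ with $Q_{c_{m-1}}$ and $Q_{c_m}$: the constants $\lX(g^n)>7\Delta$, $\diam_X(Q_{c_{m-1}}\cap Q_{c_m})\le\Delta$ and the bound \ref{WM_adjacent} are statements about integer tree-distances and are blind to a sub-edge discrepancy in cone-off length. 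So the mechanism of Propositions~\ref{pro:contains_c} and~\ref{pro:by c} cannot, by itself, certify that your four-piece concatenation is geodesic; ruling out this last $2r_0$ is precisely where all the difficulty of the lemma sits.

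What closes the gap in the paper is the part of your strengthened induction hypothesis that you carry along but never invoke: the lemma itself at index $m-1$. Suppose $[x_0,q_m]\cup[q_m,c_m]$ is not geodesic, where $[x_0,q_m]$ is any geodesic. Then some $[x_0,y]\cup[y,c_m]$ with $y\in Q_{c_m}$ is shorter, necessarily by exactly $2r_0$, so that $[x_0,y]\cup[y,c_m]\cup[c_m,q_m]$ is again geodesic --- and it ends at $q_m\in g_{m-1}\cdots g_1W\cap X$. By the induction hypothesis this geodesic must pass through $c_{m-1}$, hence $c_{m-1}\in[x_0,y]$; but then $[c_{m-1},y]\cup[y,c_m]$ is a path from $c_{m-1}$ to $c_m$ of length $\dist(c_{m-1},c_m)-2r_0$, contradicting that $[c_{m-1},q_{m-1}]\cup[q_{m-1},q_m]\cup[q_m,c_m]$ from Lemma~\ref{lem:adj c1 c2} is geodesic. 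With this argument inserted, your induction closes and yields the paper's proof; without it (or an equally sharp replacement for the $2r_0$-shortcut analysis), the proposal does not prove the lemma.
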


\begin{proof}
We are going to prove the following assertion by induction on $k$:
For any $x_k \in g_k \dots g_1 W \cap X$, any geodesic segment $[x_0, x_k]$ passes through $c_k$.

For $k = 1$, this follows directly from Proposition \ref{pro:by c}.
Now assume $k \ge 2$, and that the property is true for all indices between $1$ and $k-1$.
By Lemma \ref{lem:adj c1 c2} there exists $x_{k-1} \in g_{k-1} \dots g_1 W \cap Q_{c_k}$ such that $[c_{k-1}, x_{k-1}] \cup [x_{k-1}, c_k]$ is geodesic.
Let $[x_0, x_{k-1}]$ be a geodesic segment.

If we can prove that the concatenation  $[x_0, x_{k-1}] \cup [x_{k-1},c_k]$ also is geodesic, then we conclude by Proposition \ref{pro:by c} that any geodesic from $x_0$ to $x_k$ contains $c_k$ and we are done.

So by contradiction, assume that $[x_0, x_{k-1}] \cup [x_{k-1},c_k]$ is not geodesic.
Then, there exists a geodesic of the form $[x_0, y] \cup [y, c_k]$ with $y \in Q_{k}$ and $\dist(x_0,y) < \dist(x_0,x_{k-1})$.
This implies $\dist(x_0,y) = \dist(x_0,x_{k-1}) - 2r_0$, so that $[x_0, y] \cup [y,c_k] \cup [c_k,x_{k-1}]$ is geodesic.
But then by induction hypothesis $[x_0,y]$ must pass through $c_{k-1}$, so we also have 
\[
\dist(c_{k-1},y) = \dist(c_{k-1},x_{k-1}) - 2r_0.
\]
But this contradicts the fact that the following two concatenations of segments are geodesics from $c_{k-1}$ to $c_k$:
\[
[c_{k-1}, x_{k-1}] \cup [x_{k-1}, c_k]
\text{ and }
[c_{k-1}, y] \cup [y, c_k]
\qedhere
\]
\end{proof}

\begin{corollary}\label{cor:tree_windmill}
The graph $T_W$ is a tree.
\end{corollary}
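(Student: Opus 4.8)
The plan is to verify the two defining properties of a tree separately: connectedness, and the absence of non-trivial reduced closed walks (equivalently, of cycles). The second part is where the real content lies, and it is exactly what Lemma \ref{lem:distWhW} was set up to deliver.

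For connectedness, I would show that every vertex lies in the connected component of the base vertex $W$. Since $T_W$ is bipartite with apex and windmill vertices alternating, and each apex vertex $gc$ is joined by an edge to the windmill vertex $gW$, it suffices to connect every windmill vertex $gW$ to $W$. Let $S = \{g \in G_{W'} : gW \text{ lies in the component of } W\}$. Because $W$ is saturated, $G_W$ fixes $W$, so $G_W \subset \Stab(W)$ and $S$ is a union of cosets of $\Stab(W)$; in particular $1 \in S$ and $S$ is stable under right multiplication by $G_W$. The edge relation shows that if $gW$ is reachable then so is $gkW$ for every $k \in G_c$ with $c \in \Adj(W)$: one travels from $gW$ to the adjacent apex $gc$ and then along the edge to $(gkg^{-1})\,gW = gkW$, which is again adjacent to $gc$. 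Hence $S$ is stable under right multiplication by every $G_c$, $c \in \Adj(W)$, as well as by $G_W$; since these subgroups generate $G_{W'}$, an induction on word length gives $S = G_{W'}$. Concretely, the windmill vertices reachable from $W$ are exactly the endpoints $g_m\cdots g_1 W$ of the paths of Set-Up \ref{setup:path}.

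For acyclicity, I would argue that $T_W$ has no non-trivial reduced (locally injective) closed walk. Since $G_{W'}$ acts transitively on the windmill vertices and the walk alternates colors, any such walk can be rotated to begin at a windmill vertex and then translated so as to be based at $W$. A reduced closed walk based at $W$ is then precisely a path $W, c_1, g_1 W, \dots, c_m, g_m\cdots g_1 W$ as in Set-Up \ref{setup:path} with $g_m\cdots g_1 W = W$ and $m \ge 1$. But Lemma \ref{lem:distWhW}, together with the standing inequality $\lX(g^n) > 7\Delta$, yields
\[
\dist_X(W, g_m\cdots g_1 W) > m\,\lX(g^n) - (3m-1)\Delta > (4m+1)\Delta > 0,
\]
so $g_m\cdots g_1 W \neq W$, a contradiction. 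Being connected and having no non-trivial reduced closed walk, $T_W$ is a tree.

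The main obstacle, to my mind, is not the curvature estimate (that is entirely packaged in Lemma \ref{lem:distWhW}) but setting up the dictionary between walks in $T_W$ and the combinatorial paths of Set-Up \ref{setup:path} cleanly. In particular one must match local injectivity of the walk with the conditions $g_i \neq 1$ and $c_i \neq c_{i-1}$: the delicate point is that a non-trivial $g_i \in G_{c_i}$ genuinely moves $W$ off itself, i.e. $g_i W \neq W$. This follows from property \ref{WM_adjacent}, since $g_i$ is loxodromic of translation length $\lX(g^n) > 7\Delta$ and so cannot preserve the bounded set $Q_{c_i} \cap W$ of diameter $\le 2\Delta$; thus $\Stab(W) \cap G_{c_i} = \{\id\}$ and $g_i \neq \id$ is equivalent to non-backtracking at $c_i$. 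Once this correspondence is in place, the two halves of the argument close immediately.
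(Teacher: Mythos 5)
Your proof is correct, and it takes a recognizably different route from the paper's, so a comparison is in order. The paper's own proof treats only acyclicity: it assumes an embedded circle exists, uses the transitive $G_{W'}$-action to move it so that it passes through $W$, reads it as a path of Set-Up~\ref{setup:path} with $g_m\cdots g_1 W = W$ and $m\ge 2$, and then contradicts Lemma~\ref{lem:through_ci} applied with $x_0=x_m$: the constant geodesic $[x_0,x_0]$ cannot contain the apex $c_m$. You perform the same reduction but instead contradict Lemma~\ref{lem:distWhW}: the bound $\dist_X(W,g_m\cdots g_1W) > m\lX(g^n)-(3m-1)\Delta$ is strictly positive, so the path cannot close up (cosmetic caveat: if $\Delta=0$ your last step $(4m+1)\Delta>0$ fails as written, but the strict inequality furnished by the lemma still gives positivity). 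Your variant is equally valid and rests on lighter machinery, since Lemma~\ref{lem:distWhW} has a short self-contained proof whereas Lemma~\ref{lem:through_ci} is proved by an induction relying on Proposition~\ref{pro:by c} and Lemma~\ref{lem:adj c1 c2}; the paper's choice costs nothing extra only because Lemma~\ref{lem:through_ci} is needed later anyway (Corollary~\ref{cor:adjQ1Q2}). You also supply two things the paper leaves unsaid: an actual proof of connectedness (induction on word length in the generators $G_W$ and $G_c$, $c\in\Adj(W)$), and the verification, via \ref{WM_adjacent} and $\lX(g^n)>7\Delta$, that a nontrivial $g_i\in G_{c_i}$ cannot stabilize the windmill vertex, so that $g_i\neq\id$ really is equivalent to non-backtracking. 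One step remains implicit in both arguments: identifying locally injective closed walks with paths of Set-Up~\ref{setup:path} also requires knowing that \emph{all} windmill-vertex neighbours of an apex vertex form a single orbit under its group, i.e.\ that the neighbours of $gc$ in $T_W$ are exactly the $gkW$ with $k\in G_c$; this is true but needs an argument (it can be extracted by induction from your connectedness statement together with Lemmas~\ref{lem:distWhW} and~\ref{lem:through_ci}), and since the paper takes it for granted as well, your proof is at least as complete as the one in the paper.
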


\begin{proof}
By contradiction, assume that $\gamma$ is an embedded circle in the graph $T_W$.
By transitivity of the action of $G$, we can assume that one of the vertex of $\gamma$ is $\id W$.
Then we get a path in $T_W$ as in Set-Up \ref{setup:path}:
\[
W, c_1, g_1 W, c_2, g_2 g_1 W, c_3, \dots, c_m, g_m \dots g_1 W,
\]
with $g_m \dots g_1 W = W$, and $m \ge 2$.
By applying Lemma \ref{lem:through_ci} to $x_0 = x_m \in W \cap X$, we get a contradiction: the constant path $[x_0,x_0]$ does not pass through $c_m$.
\end{proof}

\begin{corollary} \label{cor:adjQ1Q2}
Let $c, c'$ be the apices of two geodesics $Q_c, Q_{c'} \in \Ql$ contained in $W'$. 
If $Q_c \cap Q_{c'} \neq \emptyset$, then there exists $g \in G_{W'}$ such that $g(Q_c)$, $g(Q_{c'})$ are both adjacent to $W$. 
\end{corollary}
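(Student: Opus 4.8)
The plan is to transfer the statement to the combinatorial tree $T_W$ of Corollary \ref{cor:tree_windmill} and to show that the two apices sit at $T_W$-distance exactly $2$.

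First I set up the reduction. If $Q_c=Q_{c'}$ there is nothing to prove, so I assume $Q_c\neq Q_{c'}$, whence $\diam_X(Q_c\cap Q_{c'})\le\Delta$. I will use that $c$ and $c'$ are apex-vertices of $T_W$, i.e.\ that each of $Q_c,Q_{c'}$ is a $G_{W'}$-translate of a geodesic adjacent to $W$; since $W'=\Sat\bigl(W\cup\bigcup_{c_0\in\Adj(W)}\cone Q_{c_0}\bigr)$, this is what the construction of $W'$ provides, and I take it as the starting point. Say $Q_c$ is adjacent to a copy $h_1W$ and $Q_{c'}$ to a copy $h_2W$. The conclusion we want — that some $g\in G_{W'}$ makes both $gQ_c$ and $gQ_{c'}$ adjacent to $W$ — says exactly that the two apex-vertices $c,c'$ have a common $W$-neighbour in $T_W$; as $T_W$ is bipartite, this is equivalent to $\dist_{T_W}(c,c')=2$. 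So it suffices to prove this last equality.

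Now suppose for contradiction that $\dist_{T_W}(c,c')\ge 4$. After normalising by an element of $G_{W'}$, I may write the (embedded) $T_W$-geodesic from $c$ to $c'$ as
\[
c,\ W=b_1,\ a_1,\ b_2,\ a_2,\ \dots,\ a_{k-1},\ b_k,\ c',
\]
with $k\ge 2$ copies $b_i$ and intermediate apices $a_1,\dots,a_{k-1}$, where $b_{i+1}=g_i\cdots g_1W$ for suitable $g_j\in G_{a_j}\setminus\{1\}$ exactly as in Set-Up \ref{setup:path}. Because the path is embedded, $Q_{a_i}\notin\{Q_c,Q_{c'}\}$ for every $i$, so $\diam_X(Q_{a_i}\cap Q_c)\le\Delta$ and $\diam_X(Q_{a_i}\cap Q_{c'})\le\Delta$. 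By construction $Q_{a_1}$ is adjacent to both $b_1$ and $b_2$, hence by Property \ref{WM_adjacent} it meets each of them in a segment of diameter at most $2\Delta$; since $\dist_X(b_1,b_2)\ge\lX(g^n)-2\Delta>5\Delta$ by the one-step estimate in Lemma \ref{lem:distWhW} and Set-Up \ref{setup:family}, the $X$-bridge $\beta$ between the subtrees $b_1\cap X$ and $b_2\cap X$ is a sub-segment of $Q_{a_1}$ of length $>5\Delta$.

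The contradiction now comes from the hypothesis $Q_c\cap Q_{c'}\neq\emptyset$. Picking $p\in Q_c\cap Q_{c'}$, and $y_0\in Q_c\cap b_1\cap X$, $y_k\in Q_{c'}\cap b_k\cap X$ (all non-empty by adjacency), we have in the tree $X$ that $[y_0,y_k]_X\subseteq[y_0,p]_X\cup[p,y_k]_X\subseteq Q_c\cup Q_{c'}$, because $y_0,p\in Q_c$ and $p,y_k\in Q_{c'}$. On the other hand, $b_2$ separates $b_1$ from $b_k$ in $X$, so the geodesic $[y_0,y_k]_X$ runs through the bridge $\beta$; hence the segment $\beta\subseteq Q_{a_1}$ of length $>5\Delta$ lies in $Q_c\cup Q_{c'}$, forcing $\diam_X(Q_{a_1}\cap Q_c)>\tfrac{5}{2}\Delta$ or $\diam_X(Q_{a_1}\cap Q_{c'})>\tfrac{5}{2}\Delta$, contradicting the bound $\Delta$ above. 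Therefore $\dist_{T_W}(c,c')=2$, and any $g\in G_{W'}$ mapping the common $W$-neighbour of $c,c'$ to the vertex $W$ does the job. The step I expect to be the main obstacle is the betweenness assertion just used, namely that along the $T_W$-geodesic the copies are arranged in $X$ so that $[y_0,y_k]_X$ really sweeps through the bridge of a consecutive pair and thus through a full sub-segment of the forbidden geodesic $Q_{a_1}$: this is precisely what Lemma \ref{lem:through_ci} encodes at the level of $\cone X$, and one must translate it back to the metric of $X$ via Proposition \ref{pro:geodesic}(\ref{geodesic2}). The other point deserving care is the initial reduction, i.e.\ verifying that a geodesic of $\Ql$ contained in $W'$ is always $G_{W'}$-equivalent to one adjacent to $W$, so that $c$ and $c'$ genuinely are vertices of $T_W$.
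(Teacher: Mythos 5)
Your reduction is the same as the paper's (show the two apex vertices are at distance $2$ in $T_W$, and argue by contradiction along an embedded path $c, W=b_1, a_1, b_2, \dots, a_{k-1}, b_k, c'$), and your preparatory steps are sound: the bound $\diam_X(Q_{a_i}\cap Q_c)\le\Delta$, the fact that the bridge $\beta$ between $b_1\cap X$ and $b_2\cap X$ is a subsegment of $Q_{a_1}$ of length $>5\Delta$, and the containment $[y_0,y_k]_X\subset Q_c\cup Q_{c'}$ obtained by splitting at $p\in Q_c\cap Q_{c'}$. But the step you yourself flag --- ``$b_2$ separates $b_1$ from $b_k$ in $X$, hence $\beta\subset[y_0,y_k]_X$'' --- is a genuine gap, and it is \emph{not} filled by ``translating Lemma \ref{lem:through_ci} back to $X$ via Proposition \ref{pro:geodesic}\ref{geodesic2}''. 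Lemma \ref{lem:through_ci} (applied to the reversed path) says only that every geodesic of $\cone X$ from $y_0$ to $y_k$ passes through the apex of $Q_{a_1}$; by Proposition \ref{pro:geodesic} this is equivalent to saying that every \emph{minimal covering family} of the non-insulator edges of $[y_0,y_k]_X$ by elements of $\Ql$ must include $Q_{a_1}$. That is strictly weaker than what you use: it is compatible, a priori, with $[y_0,y_k]_X\cap Q_{a_1}$ being a single edge (an edge lying on no other member of $\Ql$ forces $Q_{a_1}$ into every covering family), so it yields neither $\beta\subset[y_0,y_k]_X$ nor even $\diam_X([y_0,y_k]_X\cap Q_{a_1})>2\Delta$. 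Making the separation claim rigorous would require its own induction along the path, essentially re-proving Lemma \ref{lem:through_ci} at the level of the tree $X$ rather than of the cone-off.

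The data you assembled is, however, exactly what the paper's own contradiction needs, so the argument can be repaired by finishing in $\cone X$ instead of in $X$. Since $y_0,p\in Q_c$ and $p,y_k\in Q_{c'}$, the concatenation $[y_0,c]\cup[c,p]\cup[p,c']\cup[c',y_k]$ is a path of length $4r_0$ in $\cone X$ avoiding all the apices $a_i$ (the $T_W$-path is embedded); by Lemma \ref{lem:through_ci}, applied with $x_0=y_0\in W\cap X$ and $x_m=y_k\in b_k\cap X$, any geodesic from $y_0$ to $y_k$ must contain $a_{k-1}$, so this path is not geodesic and therefore $\dist(y_0,y_k)=2r_0$. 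By Proposition \ref{pro:geodesic} this forces $[y_0,y_k]_X$ to lie in a single geodesic $Q\in\Ql$, necessarily $Q=Q_{a_{k-1}}\notin\{Q_c,Q_{c'}\}$; splitting $[y_0,y_k]_X$ at the median of $y_0,p,y_k$ (a point of $Q_c\cap Q_{c'}\cap[y_0,y_k]_X$) then gives $\dist_X(y_0,y_k)\le\diam_X(Q\cap Q_c)+\diam_X(Q\cap Q_{c'})\le 2\Delta$, contradicting the lower bound $\dist_X(b_1,b_k)>5\Delta$ from Lemma \ref{lem:distWhW}. As written, though, your proof is incomplete at its key step, and the mechanism you propose for closing it does not work.
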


\begin{proof}
If $c = c'$, the conclusion is direct.
Otherwise we consider the geodesic path from $c$ to $c'$ in the tree $T_W$.
Up to the action of $G_{W'}$, we can assume that the path is of the form $c, W, c_1, \dots,c_m, g_m\dots g_1W, c'$. 
We want to prove that the path as length 2, that is, $m=0$ and $g_m\dots g_1W = W$.

By contradiction assume that $g_m\dots g_1W\neq W$, with $m \ge 1$.
We can apply Lemma \ref{lem:through_ci} to $x_0 \in Q_c \cap W$ and $x_m \in g_m \cdots g_1 W \cap Q_{c'}$, and we get that any geodesic $[x_0,x_m]$ should pass through $c_m$.
Observe that $x_0 \not\in Q_{c'}$, $x_m \not\in Q_c$ and $Q_c \cap Q_{c'} \cap [x_0,x_m]_X \neq \emptyset$.
Choose $y \in Q_c \cap Q_{c'} \cap [x_0,x_m]_X$.
We have a path of length $4r_0$ from $x_0$ to $x_m$, passing through $c, y, c'$, which can not be geodesic because it does not pass through $c_m$. 
So $\dist(x_0,x_m) =2r_0$.
Consequently, there exists a geodesic $Q\in \Ql$ distinct from both $Q_{c}$ and $Q_{c'}$, and containing $x_0$ and $x_m$. Hence, we get:
\[d_X(x_0, x_1) \le d_X(x_0, y) + d_X(y, x_1) \le \diam_X(Q \cap Q_c) + \diam_X(Q \cap Q_{c'}) \le 2\Delta,\]
in contradiction with Lemma \ref{lem:distWhW}. 
\end{proof}

\begin{remark}\label{remark_axe-dans-W}
By \ref{WM_length}, any $f\in G_W\setminus\{\id\}$ is loxodromic for the action on $X$. 
Moreover, we claim that $\axe(f)\subset W$.
Indeed, $f$ preserves $W$.
Let $x\in W\cap X$ and $\bar{x}$ its projection in the tree on $\axe(f)$.
By quasiconvexity of $W$, $[x,f^n(x)]_X$ is in $W$ for any $n \in \Z$, so the subsegment $[\bar x, f^n(\bar x)] \subset \axe(f)$ is also in $W$, proving the claim.
\end{remark}

\begin{lemma}\label{lemma_W4}
Every non-trivial element $f$ in $G = G_{W'}$ is loxodromic for the action on $X$, with translation length $\lX(f) \ge \lX(g^n)$, and equality if and only if $f$ is conjugate to $g^n$.
\end{lemma}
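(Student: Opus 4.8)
The plan is to read off the conclusion from the action of $G_{W'}$ on the tree $T_W$ of Corollary \ref{cor:tree_windmill}. Since left translation preserves the bipartition of $T_W$ into windmill-vertices and apex-vertices, no element inverts an edge, so each nontrivial $f \in G_{W'}$ is either elliptic (fixes a vertex) or loxodromic on $T_W$, and I treat these two cases separately. Throughout I use that $\lX$ is a conjugacy invariant and that, by the construction of $T_W$, the stabilizer of the windmill-vertex $W$ is $G_W$ while the stabilizer of an apex-vertex $c$ is the infinite cyclic group $G_c = \langle \phi g^n \phi^{-1}\rangle$; I also read ``conjugate to $g^n$'' as allowing $g^{-n}$ (which has the same translation length), following \ref{WM_length}.

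Suppose first that $f$ is elliptic, so after conjugating I may assume $f$ fixes either $W$ or an apex $c \in \Adj(W)$. If $f$ fixes $W$ then $f \in G_W$, and property \ref{WM_length} for the windmill $W$ gives directly that $f$ is $X$-loxodromic with $\lX(f) \ge \lX(g^n)$, equality holding iff $f$ is conjugate to $g^n$. If $f$ fixes $c$ then $f = \phi g^{nk}\phi^{-1}$ for some $k \neq 0$, which is $X$-loxodromic with $\lX(f) = |k|\lX(g^n) \ge \lX(g^n)$, equality iff $|k| = 1$, i.e.\ iff $f$ is conjugate to $g^{\pm n}$. So the statement holds for elliptic $f$.

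Now suppose $f$ is loxodromic on $T_W$; I must prove the strict inequality $\lX(f) > \lX(g^n)$. Conjugating, I may assume the axis of $f$ in $T_W$ runs through $W$; let $2m$ be its $T_W$-translation length, so that $f(W) = g_m\cdots g_1 W$ for a path $W, c_1, \dots, c_m, f(W)$ as in Set-Up \ref{setup:path} that is a fundamental domain of the axis. Because $W$ lies on the axis, concatenating $k$ copies of this fundamental domain is again a geodesic, hence a legitimate path of length $mk$ in the sense of Set-Up \ref{setup:path}; applying Lemma \ref{lem:distWhW} to it, together with $x_0 \in W \cap X$ and $f^k x_0 \in f^k W \cap X$, yields
\[
\dist_X(x_0, f^k x_0) \ge \dist_X(W, f^k W) > mk\,\lX(g^n) - (3mk-1)\Delta .
\]
Thus $f$ has unbounded orbits on $X$, so it is $X$-loxodromic with $\lX(f) = \lim_k \tfrac1k \dist_X(x_0, f^k x_0) \ge m\big(\lX(g^n) - 3\Delta\big)$. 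For $m \ge 2$ this exceeds $\lX(g^n)$: indeed $m(\lX(g^n)-3\Delta) - \lX(g^n) = (m-1)\lX(g^n) - 3m\Delta > 7(m-1)\Delta - 3m\Delta \ge 0$, using $\lX(g^n) > 7\Delta$ and $m \ge 2$.

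The remaining case $m = 1$ is the crux, since there the bound above only gives $\lX(f) \ge \lX(g^n) - 3\Delta$. Here $f(W) = g_1 W$ with $g_1 \in G_{c_1}\setminus\{\id\}$, so $f = g_1 w$ with $w = g_1^{-1}f \in G_W\setminus\{\id\}$ (if $w$ were trivial, $f = g_1$ would fix $c_1$). Now $g_1$ is $X$-loxodromic with axis $Q_{c_1}$ and $\lX(g_1) \ge \lX(g^n)$, while by Remark \ref{remark_axe-dans-W} and \ref{WM_length} the element $w$ is $X$-loxodromic with $\axe(w) \subset W$ and $\lX(w) \ge \lX(g^n)$; as $c_1$ is adjacent to $W$, property \ref{WM_adjacent} gives $\diam_X(\axe(g_1)\cap\axe(w)) \le \diam_X(Q_{c_1}\cap W) \le 2\Delta$. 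I then invoke the standard lower bound for a product of two loxodromic isometries of a tree whose axes overlap in a segment of length $\delta$ smaller than both translation lengths, namely $\lX(g_1 w) \ge \lX(g_1) + \lX(w) - 2\delta$ — a short computation in $X$ that only distinguishes whether the two axes are coherently oriented along the overlap. With $\delta \le 2\Delta$ this gives $\lX(f) \ge 2\lX(g^n) - 4\Delta > \lX(g^n)$ since $\lX(g^n) > 7\Delta$. Collecting the cases, every nontrivial $f$ is $X$-loxodromic with $\lX(f) \ge \lX(g^n)$, and equality forces $f$ to be elliptic on $T_W$ and conjugate to $g^{\pm n}$, which is the assertion. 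The principal obstacle is exactly this $m=1$ estimate: the coarse count of Lemma \ref{lem:distWhW} loses $3\Delta$, and only the sharper ping-pong inequality for a product of two loxodromics with controlled axis overlap closes the gap.
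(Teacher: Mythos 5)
Your proof is correct, and its overall architecture coincides with the paper's: both work with the action on the bicolored tree $T_W$, split into the elliptic case (settled by the identification of vertex stabilizers with conjugates of $G_W$ and of the $G_c$), the case $m=1$ (settled by writing $f = g_1 w$ with $w \in G_W\setminus\{\id\}$ and invoking the standard inequality $\lX(g_1w)\ge \lX(g_1)+\lX(w)-2\diam_X(\axe(g_1)\cap\axe(w))$ together with \ref{WM_adjacent}), and the case $m\ge 2$. The genuine difference is in this last case. The paper stays at the level of a single fundamental domain: it picks an explicit point $x\in W\cap Q_{c_1}$ whose image realizes the bridge to $f(W)\cap f(Q_{c_1})$, uses the criterion that $z$ lies on $\axe_X(f)$ if and only if $f(z)\in[z,f^2(z)]$, and distinguishes whether $f(x)\in Q_{c_m}$, obtaining $\lX(f)\ge \dist_X(W,fW)-2\Delta$ and then concluding via Lemma \ref{lem:distWhW}. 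You instead pass to powers: you apply Lemma \ref{lem:distWhW} to the $k$-fold concatenation of the fundamental domain along the $T_W$-axis and use the stable-length formula $\lX(f)=\lim_k \frac1k\dist_X(x_0,f^kx_0)$. This buys a cleaner argument (no point-on-axis case analysis, and it simultaneously shows $f$ is loxodromic on $X$ from unboundedness of orbits) and even a marginally better constant ($m\lX(g^n)-3m\Delta$ versus the paper's $m\lX(g^n)-(3m+1)\Delta$). What it costs is the claim, which you state but do not verify, that the concatenated geodesic is a ``legitimate path'' in the sense of Set-Up \ref{setup:path}: making this precise requires rewriting the translate $f^i[W,fW]$ with apices $f^ic_j$ and group elements $f^ig_jf^{-i}\in G_{f^ic_j}\setminus\{\id\}$, and checking local injectivity at the junctions $f^iW$ from geodesicity of the axis. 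This is a fixable gloss, of the same nature as what the paper itself leaves implicit when it asserts that geodesics in $T_W$ issued from $W$ are paths as in Set-Up \ref{setup:path}.
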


\begin{proof}
We use the action of $G$ on the bicolored tree $T_W$ (see Corollary \ref{cor:tree_windmill}).
Let $f$ be a non trivial element in $G$.

If the action of $f$ on $T_W$ is elliptic, we have two cases.
If $f$ is conjugate to an element in $G_W$, then we get the conclusion by property \ref{WM_length}.
Otherwise $f$ lies is some $G_c$ and acts on $X$ as a loxodromic element with axis $Q_c$.
In this case, $f$ is conjugated to some power of $g^n$, so we get $\lX(f) \ge \lX(g^n)$, with equality if and only if $f$ is conjugate to $g^n$.
    
If the action of $f$ on $T_W$ is loxodromic, we consider the axis of $f$ in $T_W$.
Up to conjugacy, we can assume that $W$ is a vertex of this axis.
We consider the path of length $2m$ between $W$ and $fW$ as in Set-up \ref{setup:path}: 
\[W, c_1, g_1 W, c_2, g_2 g_1 W, c_3, \dots, c_m, g_m \dots g_1 W=fW,\] where $2m$ is the translation length of $f$ on $T_W$. 
This path is included in $\axe(f)$.

If $m=1$ then there exists $w \neq 1\in G_W$ such that $f=g_1w$. 
By Remark \ref{remark_axe-dans-W}, $\axe(w)\subset W$ so by \ref{WM_adjacent} the diameter of the intersection $\axe(w)\cap \axe(g_1)$ is less than $2\Delta$.
Consequently, using \ref{WM_length} and that $\lX(g^n)>7\Delta$ :
\[\lX(g_1w)\geq\lX(g_1)+\lX(w)-2\diam(\axe(g_1)\cap \axe(w))\geq \lX(g^n)+(\lX(g^n)-4\Delta)>\lX(g^n).\]

Now assume that $m\geq 2$.
Consider $[y,y']_X$ realizing the distance in the tree between $W\cap Q_{c_1}$ and $f(W)\cap f(Q_{c_{1}})$. Choose $x\in  W\cap Q_{c_1}$ such that $f(x)=y'$. Recall that a point $z$ belongs to the axis of $f$ in the tree $X$ if and only if $f(z)\in [z,f^2(z)]$. Hence, if $f(x)\notin Q_{c_m}$ then $x$ is on the axis of $f$ for the action on $X$ and $\l_X(f)\geq \dist_X(W,g_m g_{m-1}\dots g_2 g_1W)$. If $f(x)\in Q_{c_m}$ then \[\diam_X\left( [x,f(x)]_X\cap [f(x),f^2(x)]_X\right)\leq \diam_X(Q_{c_m}\cap f(Q_{c_{1}}))\leq \Delta,\]  and so $ \lX(f)\geq \dist_X(W,g_m g_{m-1}\dots g_2 g_1W)-2\Delta$. Consequently, in the two cases we have that, using Lemma \ref{lem:distWhW}, the translation length of $f$ is:
	\begin{multline*}
	\lX(f)\geq \dist_X(W,g_m g_{m-1}\dots g_2 g_1W)-2\Delta> m \lX(g^m) - (3m-1)\Delta -2\Delta\\
	=\lX(g^n)+(m-1)(\lX(g^n)-3\Delta)-4\Delta>\lX(g^n)
	\end{multline*}
	when $\lX(g^n)>7\Delta$.  
\end{proof}

\subsection{Proofs}

\begin{proof}[Proof of Proposition \ref{pro:growing}]
First we observe that if $\Adj(W)$ is empty, then the proof of Proposition \ref{pro:growing} is direct.
The completion assumption implies that $W \cap X \subsetneq X$, so that we get a strictly larger set by defining
\[W'=W\cup \{e\subset X\mid e \text { insulator edge with one end in }W\}.\]
By construction, $W'$ is quasiconvex and complete. $W'$ does not contain any new apex so conditions \ref{WM_1}, \ref{WM_free_product} and \ref{WM_length} are satisfied. 
Moreover $W$ does not have any adjacent geodesic, so for any $Q\in \Ql$ adjacent to $W'$, the intersection $Q\cap W'$ is either empty or reduced to a singleton, which gives \ref{WM_adjacent}.

Now we assume that $\Adj(W)$ is not empty, and we define $W'$ as in \S\ref{sec:TW}.
By definition $W'$ is quasi-convex, saturated and complete so it satisfies property \ref{WM_1} of the definition of a windmill.
Property \ref{WM_free_product} is obtained using the Bass-Serre theory (Proposition \ref{pro:bass serre}) because by Corollary \ref{cor:tree_windmill}, $G_{W'}$ acts on the tree $T_W$ with trivial stabilizers of edges, and stabilizers of vertices are either conjugates of $G_W$ or conjugates of $G_c$ for $c\in\Adj(W)$. Hence, using the fact that $W$ is a windmill, the group $G_{W'}$ is a free product of some groups among the $G_c$, $c\in C_{W'}$.
Lemma~\ref{lemma_W4} gives the axiom \ref{WM_length} so it remains to prove that $W'$ satisfies \ref{WM_adjacent}.

By contradiction, assume that there exists $Q \in \Ql$ an adjacent geodesic to $W'$ with $\diam_X(Q \cap W') > 2 \Delta$.
This means that there exists three geodesics $Q_1, Q_2, Q_2'$ in $W'$, which are orbits of adjacent geodesics to $W$, such that there exists a geodesic subsegment $[v_2', v_1'] \cup [v_1', v_1] \cup [v_1,v_2] \subset Q$ of length $> 2\Delta$ not meeting the orbit of $W$, such that $v_1, v_1' \in Q_1$, $v_1', v_2' \in Q_2'$, $v_1, v_2 \in Q_2$.
By Corollary \ref{cor:adjQ1Q2}, using the action of $G_{W'}$ we can assume that $Q_1$, $Q_2$ are adjacent to $W$.
Then there exists $q \in W \cap Q_1 \cap Q_2$ such that $q,v_2, v_1'$ form a tripod in $X$ with branch point $v_1$ (see Figure \ref{fig:proofW2}).

We have $\diam_X([q,v_1]) \le \Delta$ and $\diam_X([v_1,v_1']) \le \Delta$, because $[q,v_1] \subset Q_1 \cap Q_2$ and $[v_1,v_1'] \subset Q_1 \cap Q$.
But then by a similar argument $v_1'$ should be $\Delta$-close to a point $q' \in g' W$, for some $g'\in G_{W'}$ with axis $Q_1$, contradicts the fact that $\dist(W, g'W) > 3 \Delta$.
\end{proof}

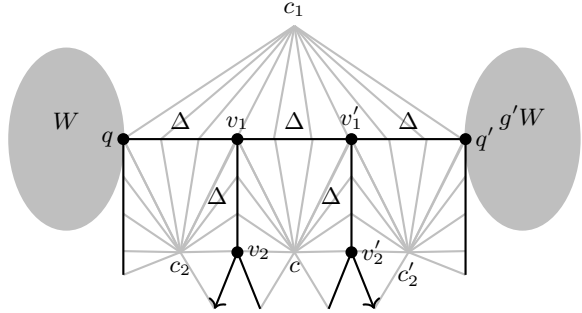
\begin{figure}[h]
\[
\begin{tikzpicture}[scale = 1.5,font=\small,thick]
\draw[lightgray, fill=lightgray] (-0.5,0) ellipse (0.5 and 0.8);
\coordinate [label=above:$W$] (W) at (-0.5,0);
\draw[lightgray, fill=lightgray] (3.5,0) ellipse (0.5 and 0.8);
\coordinate [label=below:$c_2$] (c2) at (0.5,-1); 
\coordinate [label=below:$c_2'$] (c2') at (2.5,-1); 
\foreach \v in {0,-0.33, ..., -1} {
\draw[lightgray] (c2) -- (0,\v);
\draw[lightgray] (c2) -- (1,\v);
\draw[lightgray] (1.5,-1) -- (1,\v);
\draw[lightgray] (1.5,-1) -- (2,\v);
\draw[lightgray] (c2') -- (2,\v);
\draw[lightgray] (c2') -- (3,\v);
}
\foreach \v in {0,0.33, ..., 1} {
\draw[lightgray] (c2) -- (\v,0);
}
\foreach \v in {1,1.33, ..., 2} {
\draw[lightgray] (1.5,-1) -- (\v,0);
}
\foreach \v in {2,2.33, ..., 3} {
\draw[lightgray] (c2') -- (\v,0);
}
\coordinate [label=above:$c_1$] (c1) at (1.5,1); 
\foreach \v in {0,0.33, ..., 3} {
\draw[lightgray] (c1) -- (\v,0);
}
\coordinate (Qstart) at (1.2,-1.5);
\coordinate (Q2start) at (0,-1.2);
\coordinate (Q2'start) at (3,-1.2);
\coordinate  (Qend) at (1.8,-1.5);
\coordinate (Q2end) at (0.8,-1.5);
\coordinate  (Q2'end) at (2.2,-1.5);
\draw[lightgray] (Q2end) -- (c2) -- (Q2start);
\draw[lightgray] (Qend) -- (1.5,-1) -- (Qstart);
\draw[lightgray] (Q2'end) -- (c2') -- (Q2'start);

\coordinate [label=above:$g'W$] (g'W) at (3.5,0);
\coordinate [label=left:$q$] (q) at (0,0);
\coordinate [label=above:$v_1$] (v1) at (1,0);
\coordinate [label=above:$v_1'$] (v1') at (2,0);
\coordinate [label=right:$q'$] (q') at (3,0);
\coordinate [label=right:$v_2$] (v2) at (1,-1);
\coordinate [label=right:$v_2'$] (v2') at (2,-1);
\draw (Q2start)--(q) to["$\Delta$"] (v1) to["$\Delta$"] (v1') to["$\Delta$"] (q')--(Q2'start) 
      (v2) to["$\Delta$"] (v1) 
      (v2') to["$\Delta$"] (v1')
      (Qstart)--(v2)
      (Qend)--(v2');
\draw[->] (v2)--(Q2end);
\draw[->] (v2')--(Q2'end);
\coordinate [label=below:$c$] (c) at (1.5,-1); 
\foreach \v in {q,q',v1,v1',v2,v2'} {
\node[bullet] at (\v) {};
}
\end{tikzpicture}
\]
\caption{An impossible configuration of the geodesics $Q,Q_1,Q_2,Q_2$, with respective apices $c,c_1,c_2,c_2'$.} \label{fig:proofW2}
\end{figure}

\begin{proof}[Proof of Theorem \ref{thm:SCT}]
We use the notation of Set-Up \ref{setup:family}.
We consider the collection of all windmills, which is not empty by Example \ref{exple:1st_windmill}, and we consider the partial order given by inclusion.
By Zorn's Lemma, there exists a maximal windmill $W$.
By Proposition \ref{pro:growing}, we must have $W = \cone X$.
Then Properties \ref{WM_free_product} and \ref{WM_length} give the assertions of the theorem.  
\end{proof}

\subsection{Further comments} \label{sec:comment}

In this section, we highlight some modifications we made compared to \cite{DGO}.

A first difference is the choice of the radius $r_0$ in the cone-off construction.
In \cite[\S 5.3]{DGO}, starting from a Gromov-hyperbolic metric space $X$, in order for the cone-off $\cone X$ to also be hyperbolic the radius $r_0$ has to be larger that a universal constant $r_U >5\times 10^{12}$.
In contrast, we chose to work with a small radius $0 < r_0 <\frac12$, since it allows us an easy description of geodesics in $\cone X$, taking advantage of the fact that $X$ is a simplicial tree (Proposition \ref{pro:geodesic}).
 
In \cite[Definition 5.1]{DGO}, the family of subgroups $\{G_Q\mid Q\in \Ql\}$ from Set-up \ref{setup:family} is called a \emph{rotating family}.
They also define a notion of very rotating family, which is a local condition about the action of $G_c$ on points close to the apex $c$, and they observe that it implies a global very rotating condition \cite[Lemma 5.5]{DGO}. 
Transposing in our context, we can make the following definition:
 
\begin{definition} 
Let $\{G_Q\mid Q\in \Ql\}$ be the family of groups from Set-up \ref{setup:family}.
\begin{itemize}
\item The family is \textit{locally very rotating} if for every $Q \in \Ql$, for every $x\in Q$ and for every $h\in G_{Q}\setminus\{\id\}$, any geodesic $[x,hx]$ contains $c_Q$.
\item We say that the family is \textit{globally very rotating } if the same property holds for  any $x \in X$.
\end{itemize}
\end{definition}

By our choice of working with a small $r_0$, $\{G_Q\mid Q\in \Ql\}$ is automatically a locally very rotating family. 
Morever using Proposition \ref{pro:contains_c} one can easily show that the family is globally very rotating as soon as $\lX(g^n) > 3\Delta$.
So even if we did not use this terminology of rotating family, here we followed quite closely the line of argument of \cite{DGO}. 
  
Finally we say a word about our definition of \emph{windmill}, where we had to adapt the part of the definition that relies on the hyperbolicity constant $\delta$.
We chose to work instead with the constant $\Delta$, which bounds the diameter of intersections of axes in our family of conjugate loxodromic isometries.
First we used in \ref{WM_1} an \emph{ad hoc} definition of quasiconvexity, which seems natural in our context and bears some resemblance with the general notion of quasiconvexity, as noted in Remark \ref{rmq:quasiconvexity}.
Second we put in \ref{WM_adjacent} a bound on the intersection of an adjacent geodesic with the windmill, which in \cite{DGO} was a consequence of the Gromov-hyperbolicity of $\cone X$ (\cite[Proof of Lemma 5.15]{DGO}).  
Finally observe that in our axiom \ref{WM_length} we ask for large translation lengths with respect to the action on the initial tree $X$, whereas in \cite{DGO} they ask for the similar condition on the cone-off $\cone X$. 
The bridge between the two is essentially our Lemma \ref{lemma_W4}.
 
\section{Group acting on a tree, following \cite{CL}}
\label{sec:CL}

In this section we prove assertion \ref{SCT2} of Theorem \ref{thm:SCT}, following \cite{CL}, from which assertion~\ref{SCT3} also directly follows.
This strategy of proof does not seem to easily provide assertion \ref{SCT1}, so we do not attempt to prove it in this paper.
Observe that once assertion \ref{SCT2} is established it follows that the normal subgroup $\llb g^n \rrb$ is a free group, because any group that acts freely on a tree is a free group  \cite[Theorem 3.3.4]{Serre}.
However the fact that one can choose a collection of conjugates of $g^n$ as a free basis is the difficult part in assertion \ref{SCT1}. 

\subsection{Admissible presentations}

Let $G \action X$ be a group acting on a simplicial tree, and $g \in G$ a loxodromic WPD element.
By Proposition~\ref{pro:WPD=>SC}, up to passing to some power we can assume that $g$ is tight.
In the following definitions we work with a fixed iterate $g^n$ of $g$, and we note $\ell = n\ell(g)$ the translation length of $g^n$.

\begin{definition}[Relator and neutral segment, {\cite[\S 2.4.1]{CL}}]\hspace{0.3cm}
\begin{itemize}
\item 
Let $r \in \Q_{>0}$. 
A \emph{relator of size $\geq r$} is an oriented segment $[x,y]\subset X$ with $\dist(x,y) \geq r$ and such that there exists a conjugate $f$ of $g^n$ with $[x,y] \subset \axe(f)$.
\item Given such a relator, up to replacing $f$ by $f^{-1}$, we can assume that $x \not\in [y, f(x)]$.
In that case, we say that $f$ is the \emph{support} of the relator $[x,y]$ (and so $f^{-1}$ is the support of the relator $[y,x]$).
\item We say that a segment $[x,y]$ is \emph{neutral} if for any relator of size $\geq r$ contained in $[x,y]$ we have $r \le \frac{1}{2}\l$.
Observe that this notion is stable under the action of $G$: if $[x,y]$ is neutral and $h \in G$, then $[hx,hy]$ is also neutral.
\end{itemize}
\end{definition}

\begin{remark}
In \cite{CL} a relator is called a \emph{piece}.
This is the result of an unfortunate last minute change of terminology (``relator'' was our working vocabulary, and was still used in the first version on arXiv...), as a piece according to classical small cancellation theory would rather be a segment contained in the intersection of \emph{two} distinct axes of conjugates of $g^n$.
The ``small'' of \emph{small} cancellation theory refers to the fact that this intersection is small in comparison with the translation length of $g^n$. This small ratio corresponds to the constant $\eps$ of Definition \ref{def:SC} (applied to $g^n$), and also to the constant $\frac{1}{12}$ that will appear in section \ref{sec:Greendlinger}.
\end{remark}

\begin{definition}[Admissible presentation, {\cite[\S 2.4.2]{CL}}] \label{def:admissible}
By definition any $h \in \llb g^n \rrb$ admits a \emph{presentation} as a product $h =h_mh_{m-1} \dots h_1$, where each $h_j$ is conjugate either to $g^n$ or to its inverse:
\[ 
\forall\,  1\leq j\leq m, \; \exists\,  \psi_j\in G, \; h_j=\psi_j g^n \psi_j^{-1} \text{ or } \psi_j g^{-n} \psi_j^{-1}.
\]
Let $x_0\in X$ be a base point. 
To such a choice of a base point and of a presentation of $h$, we associate three sequences  $(a_i)$, $(b_i)$ and $(x_i)$, $1\leq i\leq m$, by setting:
$a_i$ is the projection of $x_{i-1}$ on $\axe(h_i)$, $b_i = h_i (a_i)$ and $x_{i} = h_i (x_{i-1})$.
We say that $h_m \cdots h_1$ is an \emph{admissible presentation} of $h$ (with respect to the base point $x_0$) if all the segments $[x_{i-1},a_i]$ are neutral (hence also the segments $[b_{i},x_i]$).
\end{definition}

\begin{lemma}[{\cite[Lemma 2.13]{CL}}]
\label{lem:presentation}
Any element $h\in \llb g^n \rrb$ admits at least one admissible presentation.
\end{lemma}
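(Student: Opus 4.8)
The plan is to prove that any $h \in \llb g^n \rrb$ admits an admissible presentation by starting from an \emph{arbitrary} presentation $h = h_m \cdots h_1$ (which exists by definition, since $\llb g^n \rrb$ is generated by conjugates of $g^{\pm n}$) and then modifying it until all the intermediate segments $[x_{i-1}, a_i]$ are neutral. The key idea is to define a notion of complexity of a presentation that decreases under a suitable surgery whenever some segment fails to be neutral, so that a presentation of minimal complexity must be admissible.

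\textbf{Setting up the complexity.} First I would associate to a presentation, together with the base point $x_0$, the sequence of points $(x_i)$, $(a_i)$, $(b_i)$ as in Definition \ref{def:admissible}. A natural measure of complexity is the total length $\sum_{i} \dist(x_{i-1}, x_i)$ of the broken path traced out by the base point under the partial products, or alternatively the pair $(m, \sum_i \dist(x_{i-1},x_i))$ ordered lexicographically (first minimize the number $m$ of factors, then the total length). Since these are non-negative quantities and $m \in \N$, a presentation minimizing the complexity exists among all presentations of $h$ with factors conjugate to $g^{\pm n}$.

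\textbf{The surgery step.} Suppose a minimal presentation is \emph{not} admissible, so that some segment $[x_{i-1}, a_i]$ contains a relator of size $> \frac12 \ell$, supported by a conjugate $f$ of $g^{n}$. The plan is to show this lets us rewrite the product with strictly smaller complexity, contradicting minimality. The point is that such a large relator means the axis of $f$ overlaps the axis of $h_i$ (or the path leading into it) in a segment longer than $\frac12\ell > \eps \ell(g^n)$, which by the small cancellation/tightness hypothesis forces $f$ and $h_i$ to be tightly related. One then inserts a trivial factor $f^{-n'} f^{n'}$ (or conjugates $h_i$ by $f$), using that $f \in \llb g^n \rrb$ is itself a conjugate of $g^n$, to fold the path back on itself: the large overlap means the new broken path cuts a corner, strictly shortening the total distance travelled while keeping the number of factors under control. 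I would verify that after the rewriting the endpoints and the total element $h$ are unchanged, and that the projections recompute to give a strictly shorter path.

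\textbf{Main obstacle.} The delicate part will be controlling the \emph{bookkeeping} of the surgery so that complexity genuinely drops: inserting a factor raises $m$, so the length must decrease by enough to compensate, which is why a lexicographic or carefully weighted complexity is preferable to raw length. The geometric heart is the estimate showing that an overlap of size $> \frac12\ell$ between a relator and the path is large enough — relative to $\Delta$ and $\ell(g^n)$ — to guarantee that the fold strictly shortens the path; this is exactly where the small cancellation hypothesis (Definition \ref{def:SC}, via Proposition \ref{pro:WPD=>SC}) and the choice of $n$ with $\ell_X(g^n) > 7\Delta$ enter. I expect the verification that the modified sequence of projections behaves as claimed — in particular that no new long relator is created in a different segment, so that the process terminates rather than cycling — to be the main technical hurdle, and it is handled precisely by the monotone complexity combined with the geometry of trees.
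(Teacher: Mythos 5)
Your overall strategy (start from an arbitrary presentation, perform surgery on a non-neutral segment, argue termination) is the right one, and your surgery — inserting a conjugate $f$ supporting a long relator, i.e.\ rewriting $h_i = (h_i f^{-1} h_i^{-1})\, h_i\, f$ — is exactly the paper's move. But there is a genuine gap in how you force termination: neither of your proposed complexities decreases under this surgery. The lexicographic pair $(m, \sum_i \dist(x_{i-1},x_i))$ cannot work, because the surgery replaces one factor by three, so $m$ increases by $2$; a lexicographically minimal presentation therefore yields no contradiction, since the modified presentation simply leaves the minimal-$m$ class. The raw total path length fares no better: writing $x'_i = f(x_{i-1})$ and $x'_{i+1} = h_i(x'_i)$, the new portion $\dist(x_{i-1},x'_i) + \dist(x'_i,x'_{i+1}) + \dist(x'_{i+1},x_i)$ is at least $\dist(x_{i-1},x_i)$ by the triangle inequality, and a computation in the tree shows it in fact grows by $4\dist(x_{i-1},y)+4\bigl(\ell-\dist(y,z)\bigr)\ge 0$, where $[y,z]=\axe(f)\cap[x_{i-1},a_i]$ is the offending relator. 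So the broken path gets \emph{longer} under the fold, not shorter: your ``cutting a corner'' intuition is applied to the wrong quantity, and a minimal presentation for either of your complexities could perfectly well fail to be admissible.

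The quantity that actually decreases is the length of the non-neutral \emph{approach} segments $[x_{i-1},a_i]$, not the jumps $\dist(x_{i-1},x_i)$: the surgery replaces the bad segment $[x_{i-1},a_i]$ by the three segments $[x_{i-1},y]$, $[f(x_{i-1}),a_i]$ and $[h_if(x_{i-1}), h_if(y)]$, each strictly shorter than $[x_{i-1},a_i]$ (the first and third shorter by more than $\tfrac12\ell$), precisely because a relator of size $>\tfrac12\ell$ forces $\dist(z,f(y)) < \dist(y,z)$, i.e.\ applying $f$ folds back past the midpoint of its translation. The paper then concludes by a direct induction: performing the surgery simultaneously at all bad indices makes the maximum of $\dist(x_{j-1},a_j)$ over non-neutral $j$ drop by at least $1$ (simpliciality gives integrality, hence termination). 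If you insist on a minimality formulation, you must minimize this maximum, not $m$ or total length. Finally, note that no small cancellation, tightness or $\Delta$-type input is needed for this lemma — the only mechanism is the folding inequality above; the hypotheses you invoke (Definition \ref{def:SC}, $\lX(g^n)>7\Delta$) enter later, in Greendlinger's Lemma \ref{lem:infernal}, not here.
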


\begin{proof}
Let $ h_m \cdots h_1$ be a presentation of $h$, and $\mathcal{I}$ the set of indexes $1 \le i \le m$ such that $[x_{i-1},a_i]$ is not neutral. 
Assume $\mathcal{I}$ not empty (otherwise the presentation is already admissible), and consider $i \in \mathcal{I}$.
By definition there exists $f$ a conjugate of $g^n$ or $g^{-n}$ such that $[y,z] = \axe(f) \cap [x_{i-1},a_i]$ is a relator of size $>\frac{1}{2}\ell$, with $y \in [x_{i-1},z]$. 
Up to replacing $f$ by $f^{-1}$, we can assume $z \in [y, f(y)]$.
 
We rewrite $h_i$ as the product of three conjugates of $g^n$:
$$h_i = (h_i f^{-1}h_i^{-1}) h_i f.$$
This yields a new presentation of $h$, and so also new sequences of points. 
In the sequence $(x_i)$ we have two new points, $x'_i = f(x_{i-1})$ and $x'_{i+1} = h_i(x'_i)$.
The point $x'_{i+2} = h_i f^{-1} h_i^{-1}(x'_{i+1})$ is equal to the old point $x_i = h_i(x_{i-1})$.
So the non-neutral segment $[x_{i-1},a_i]$ is replaced by three new segments $[x_{i-1},y]$, $[x'_i,a_{i}]$ et $[x'_{i+1}, h_i(f(y))]$ (see Figure \ref{fig:admissible}). 
Other segments $[x_{j-1},a_j]$ are left unchanged.
Since $[y,z]$ is a relator of size $>\frac{1}{2}\ell$, we have $\dist(z, f(y)) <\dist(y,z)$, and we get the inequalities:
\begin{align*}
\dist(x_{i-1},y) = \dist(x'_{i+1},h_i(f(y))) < \dist(x_{i-1},a_i) - \tfrac{1}{2}\ell; 
&&
\dist(x'_{i},a_i) < \dist(x_{i-1},a_i).
\end{align*}

\begin{figure}
\hspace{2cm}
\begin{tikzpicture}[scale = 1.7,font=\small,thick]
\coordinate [label=above:$a_i$] (ai) at (0,0);
\coordinate [label=above:$b_i$] (bi) at (1.5,0);
\coordinate [label=left:$x_{i-1}$](xi-1) at (-1,-2) {};
\coordinate [label=right:{$x_i = x_{i+2}' = h_i(x_{i-1})$}](xi) at (2.5,-2) {};
\coordinate [label=left:$y$](y) at ($ (xi-1)!1/3!(ai) $) {};
\coordinate [label=left:$z$](z) at ($ (xi-1)!2/3!(ai) $) {};
\coordinate [label=left:$h_i(y)$] (hy) at ($ (xi)!1/3!(bi) $) {};
\coordinate [label=left:$h_i(z)$] (hz) at ($ (xi)!2/3!(bi) $) {};
\coordinate [label=below:{$x_i'=f(x_{i-1})$}, right= of y,xshift=40] (xi') {};
\coordinate [label=above right:{$x_{i+1}'=h_i(x_i')$}, right= of hz,xshift=40] (xi+1') {};
\coordinate [right=of xi-1,xshift=10] (axef_d) {};
\coordinate [right=of z,xshift=30] (axef_f) {};
\coordinate [above left=of ai,xshift=-20] (axeh_d) {};
\coordinate [above right=of bi,xshift=20] (axeh_f) {};
\coordinate [above=of xi+1',yshift=0] (axe_d) {};
\coordinate [below=of xi+1',yshift=-20] (axe_f) {};
\coordinate [label=above:$f(y)$](fy) at ($ (z)!1/2!(axef_f) $) {};
\coordinate [label=above left:$h_i(f(y))$](hfy) at ($ (hz)!1/2!(axe_d) $) {};
\draw[draw=lightgray] (xi-1)--(y) (z)--(ai) (fy)--(xi') (xi)--(bi) (hfy)--(xi+1');
\draw[draw=black,->] (axef_d)--(y)--(z)--node[pos=1,auto]{$\axe(f)$}(axef_f);
\draw[draw=black,->] (axeh_d)--(ai)--(bi)--node[pos=.95,auto]{$\axe(h_i)$}(axeh_f);
\draw[draw=black,->] (axe_d)--(hz)--(hy)--node[pos=.9,auto]{$\axe(h_i f^{-1} h_i^{-1})$}(axe_f);
\foreach \v in {ai,bi,xi,xi-1,y,z,fy,hy,hz,xi',xi+1',hfy} {
\node[bullet] at (\v) {};
}
\end{tikzpicture}
\caption{Proof of Lemma \ref{lem:presentation}.}
\label{fig:admissible}
\end{figure}
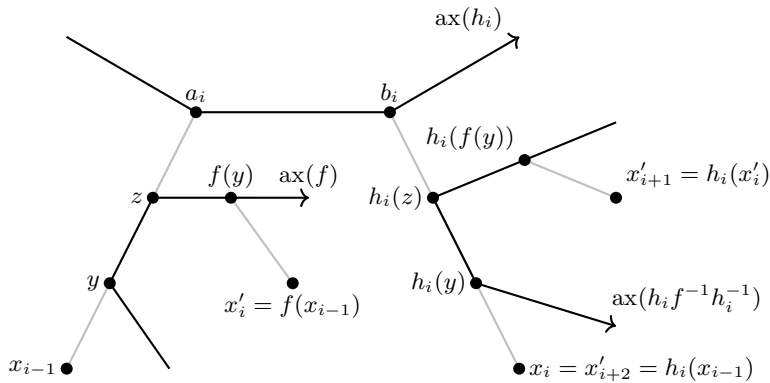

Since this modification does affect other non-neutral subsegment $[x_{j-1},a_j]$, we can simultaneously perform this modification for all $i \in \mathcal{I}$. 
We obtain a new presentation of $h$, a new list of bad indexes $\mathcal{I}'$.
If this list is not empty, the maximum of the lengths $\dist(x_{j-1},a_j)$ taken over all non-neutral segments $[x_{j-1},a_j]$, $j \in \mathcal{I}'$, has dropped by at least $1$ since we are working with a simplicial tree.
By induction, after finitely many such steps we obtain an admissible presentation for $h$.
\end{proof}

\begin{lemma}[{\cite[Lemma 2.15]{CL}}]
\label{lem:mini}
Let $h = h_m \cdots h_1$ be an admissible presentation with base point $x_0$. 
If there exist two indexes $j > i$ such that $h_j = h_i^{-1}$, then $h$ admits an admissible presentation with the same base point, and with only $m-2$ factors.
\end{lemma}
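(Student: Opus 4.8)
The plan is to cancel the pair $h_j = h_i^{-1}$ by conjugating the block of factors strictly between positions $i$ and $j$. Writing $w = h_{j-1}\cdots h_{i+1}$ for this block, the relation $h_j = h_i^{-1}$ gives
\[
h_j\, w\, h_i = h_i^{-1}\, w\, h_i = (h_i^{-1}h_{j-1}h_i)(h_i^{-1}h_{j-2}h_i)\cdots(h_i^{-1}h_{i+1}h_i),
\]
where I have inserted $h_i h_i^{-1}$ between consecutive factors. Each new factor $h_i^{-1}h_k h_i$ is again conjugate to $g^{\pm n}$, so replacing $h_j\cdots h_i$ by this product yields a presentation of $h$ with exactly $m-2$ factors (and when $j=i+1$ the block $w$ is empty, so both factors simply disappear). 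The base point $x_0$ is unchanged, and it remains only to check that this new presentation is admissible.

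First I would record the new sequence of base-point images. Denoting them $x'_k$, a short induction shows $x'_k = x_k$ for $k \le i-1$, then $x'_k = h_i^{-1}x_{k+1}$ for $i \le k \le j-2$, and finally $x'_k = x_{k+2}$ for $k \ge j-1$; the two transitions match because $x_j = h_i^{-1}x_{j-1}$. The point of this computation is that every incoming point of the new presentation is the image, under either $\id$ or $h_i^{-1}$, of one of the original points.

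Then, for each new factor I would identify the segment $[x'_{k-1}, a'_k]$ with a $G$-translate of one of the original neutral segments, using that an isometry commutes with nearest-point projection onto a subtree and that neutrality is $G$-invariant. For $k \le i-1$ the factor and the incoming point are unchanged, so the segment is literally $[x_{k-1}, a_k]$. For a conjugated factor $h_i^{-1}h_{k+1}h_i$, whose axis is $h_i^{-1}\axe(h_{k+1})$, the projection of $h_i^{-1}x_k$ is $h_i^{-1}a_{k+1}$, so $[x'_{k-1}, a'_k] = h_i^{-1}[x_k, a_{k+1}]$; and for the tail the segment is $[x_{k+1}, a_{k+2}]$. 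In each case the relevant original segment was neutral by hypothesis, hence so is its translate.

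The one step needing care is the junction at the new index $k=i$, where the incoming point $x'_{i-1}=x_{i-1}$ is not itself of the form $h_i^{-1}x_\ast$. Here the projection $a'_i$ of $x_{i-1}$ onto $h_i^{-1}\axe(h_{i+1})$ equals $h_i^{-1}a_{i+1}$, since $x_{i-1}=h_i^{-1}x_i$, so that $[x_{i-1}, a'_i] = h_i^{-1}[x_i, a_{i+1}]$, which is neutral because $[x_i, a_{i+1}]$ was precisely the neutral segment attached to the original index $i+1$. This is the only place where admissibility of the original presentation at an index other than $i$ is invoked, and it is what makes the reduction close up cleanly; I expect it to be the main point to get right.
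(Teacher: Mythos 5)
Your proof is correct and follows exactly the paper's approach: the paper cancels the pair $h_j = h_i^{-1}$ by rewriting $h_i^{-1}h_{j-1}\cdots h_{i+1}h_i$ as the product of conjugates $(h_i^{-1}h_{j-1}h_i)\cdots(h_i^{-1}h_{i+1}h_i)$, which is precisely your decomposition. The only difference is that the paper states the resulting presentation is admissible without proof, whereas you verify it by tracking the new points $x'_k$ and identifying each new segment $[x'_{k-1},a'_k]$ as a $G$-translate of an original neutral segment --- a worthwhile check, carried out correctly, including the junction at index $i$.
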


\begin{proof}
We assume $j\geq i+2$, otherwise the simplification is obvious. 
Then by writing  
\begin{align*}
h &= h_m \cdots h_{j+1} h_i^{-1} h_{j-1} \cdots h_{i+1} h_i h_{i-1} \cdots h_1 \\
&= h_m \cdots  h_{j+1} (h_i^{-1} h_{j-1} h_i) \cdots (h_i^{-1} h_{i+1} h_i)  h_{i-1} \cdots h_1
\end{align*}
we get the expected admissible presentation with $m-2$ factors.
\end{proof}

\subsection{A variant of Greendlinger's Lemma}
\label{sec:Greendlinger}

Now, we start the proof of Theorem \ref{thm:SCT}. 

Consider $B>0$ the constant given by the tightness of $g$ (see Definition \ref{def:tight}), and choose $n \ge 1$ such that the translation length $\ell  = n\ell(g)$ of $g^n$ satisfies 
\[
\l > 12B.
\]

\begin{remark}
The constant $\frac{1}{12}$ should be compared with the constant $\frac{1}{6}$ of the condition $C'(1/6)$ mentioned in the introduction.
One could increase a little this constant ($\frac{1}{8}$ seems plausible) if we relaxed assertion \ref{SCT2}, for instance by asking that $\ell(h) > \frac{1}{2}\ell(g^n)$.
However it seems that by nature this method of proof cannot provide any sharp bound on the power necessary to get a proper normal subgroup, so it is not clear what would be the point of trying to replace our $\frac{1}{12}$ by a slightly better constant.  
\end{remark}
Let $h$ be a non trivial element in $\llb g^n \rrb$, and $x_0 \in \axe(h)$ (or a priori in $\Fix(h)$ if $h$ is elliptic, but the proof will show that $h$ is always loxodromic).
By Lemma \ref{lem:presentation}, there exists an admissible presentation 
\[
h=h_m \cdots h_1
\]
with respect to the point $x_0$.
We assume that $m$ is minimal, among all admissible presentations of $h$ with base point $x_0$. Let $(a_i)$, $(b_i)$ and $(x_i)$ be the sequences from Definition~\ref{def:admissible}.

\begin{definition}[Configuration of order $k$, {\cite[\S 2.5]{CL}}]
A sequence of points $(c_{-1}$, $c_0$, $\cdots$, $c_k, c_{k+1})$ in $[x_0,x_j]$ is a   \emph{configuration of order $k \ge 1$} for the segment $[x_0,x_j]$ if
\begin{enumerate}[(i)]
\item \label{point:i} The sequence is monotonous, with $x_0 = c_{-1}$ and $x_j = c_{k+1}$;
\item \label{point:ii} For all $0 \leq i \leq k$, the segment $[c_i,c_{i+1}]$ is either neutral or a relator, with the following rules:
        \begin{enumerate}[(a)]
         \item Two consecutive segments are not both neutral;
         \item The last segment $[c_k,c_{k+1}]= [c_k, x_j]$ is neutral;
         \item The second segment $[c_0,c_1]$ is a relator of size $\geq \l-2B$ if $[c_1, c_2]$ is neutral, or of size $\geq \l-3B$ otherwise;
         \item For any other relator $[c_{i-1}, c_i]$, with $i>1$, the size is $>4B$ if $[c_{i}, c_{i+1}]$ is neutral and $>3B$ otherwise.
        \end{enumerate}
\item \label{point:iii} For all $0 \leq i \leq k$, if $[c_i,c_{i+1}]$ is a relator, then there exists an index $l_i$ with $1\leq l_i\leq j$ such that $h_{l_i}$ is the support of the relator $[c_i,c_{i+1}]$.
\end{enumerate}
Observe that properties \ref{point:ii} and \ref{point:iii} do not concern the initial segment $[x_0,c_0]$.
\end{definition}

\begin{lemma}[Greendlinger's Lemma, {\cite[Lemma 2.16]{CL}}]
\label{lem:infernal}
For each $j = 1,\cdots,m$, there exists $k \ge 1$ such that the segment $[x_0,x_j]$ admits a configuration of order $k$.

Moreover if $j \ge 2$ and $k = 1$, then the initial segment $[x_0,c_0]$ of the configuration has diameter $> 2B$.
\end{lemma}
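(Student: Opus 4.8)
The plan is to prove Greendlinger's Lemma by induction on $j$. The idea is that a configuration of order $k$ for $[x_0,x_j]$ records how the admissible presentation builds up a long axis-like geodesic out of alternating relators and neutral segments, and the inductive step must show that multiplying by $h_{j+1}$ either extends the last relator, starts a new one, or — in the worst case — forces a cancellation that the minimality of $m$ forbids. Concretely, I would first establish the base case $j=1$: since $h_1$ is conjugate to $g^{\pm n}$, its axis meets $[x_0,x_1]$ in a relator of size close to $\ell$ (because $[x_0,a_1]$ and $[b_1,x_1]$ are neutral by admissibility, so the central piece $[a_1,b_1]$ has size essentially $\ell \gg 4B$), giving a configuration of order $k=1$ with $c_0 = a_1$, $c_1 = b_1$, $c_2 = x_1$.

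For the inductive step, assume $[x_0,x_j]$ has a configuration $(c_{-1},\dots,c_{k+1})$ and pass to $[x_0,x_{j+1}]$, where $x_{j+1} = h_{j+1}(x_j)$ and $h_{j+1}$ is the support of a relator along $\axe(h_{j+1})$. I would analyze the projection $a_{j+1}$ of $x_j$ on $\axe(h_{j+1})$ and compare $\axe(h_{j+1})$ with the previous data. The key geometric input is tightness (Definition \ref{def:tight}) with the constant $B$ and the hypothesis $\ell > 12B$: two distinct conjugate axes overlap in a segment of diameter at most $B$, so the new relator $[a_{j+1},b_{j+1}]$ of size $\approx \ell$ can overlap the last relator $[c_{k-1},c_k]$ of the old configuration in at most $B$. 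This means that after the old neutral tail $[c_k,x_j]$, the segment $[x_0,x_{j+1}]$ picks up a genuinely new relator, and one adjusts the breakpoints, using the size thresholds ($\ell - 2B$, $\ell-3B$, $4B$, $3B$) to guarantee that the bookkeeping rules \ref{point:ii}(a)--(d) survive. One must be careful about whether the new axis back-tracks into the old configuration: if the overlap were large it would mean $h_{j+1}$ and some earlier support differ by an element fixing a long segment, and tightness would force $h_{j+1} = h_{l}^{\pm 1}$, allowing a length-$2$ simplification by Lemma \ref{lem:mini}, contradicting minimality of $m$. This is where the two ingredients — tightness and minimality — combine.

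The main obstacle will be the case $k=1$ and the ``moreover'' clause asserting that the initial segment $[x_0,c_0]$ has diameter $>2B$ when $j\ge 2$. Here the difficulty is that the order can collapse: successive relators may telescope so that the configuration has only one relator even though $j\ge 2$, and I must control how much of the first factor's contribution is absorbed into the initial non-canonical segment $[x_0,c_0]$. The argument is a quantitative accounting: the total length $\dist(x_0,x_j)$ is bounded below via Lemma \ref{lem:distWhW}-type estimates, and since each relator contributes nearly $\ell$ while overlaps cost at most $B$ each, if the order stayed at $1$ for $j\ge 2$ the ``lost'' length from the second factor (at least $\ell - 3B > 2B$) must be parked in $[x_0,c_0]$. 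I would make this precise by tracking the projections $a_i, b_i$ and showing that the back-tracking of $x_0$ relative to $\axe(h_1)$ accounts for the claimed $2B$ bound.

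I expect the combinatorial case analysis in the inductive step — deciding, given the relation between $\axe(h_{j+1})$ and the last relator, whether $k$ stays, increases, or the configuration must be repaired — to be the most delicate part, precisely because the thresholds in \ref{point:ii}(c)--(d) are tuned to exactly survive a single overlap of size $B$ at each juncture. The recurring technical step will be invoking tightness to bound axis overlaps by $B$, together with minimality of $m$ (via Lemma \ref{lem:mini}) to rule out the degenerate situation where two factors are mutual inverses producing a long cancellation.
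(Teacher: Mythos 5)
Your plan has the right skeleton --- induction on $j$, the base case $(c_0,c_1)=(a_1,b_1)$, and the two ingredients (tightness bounding overlaps by $B$; minimality via Lemma \ref{lem:mini} excluding inverse pairs) are exactly those of the paper --- but the inductive step, which is the entire content of the lemma, is missing its organizing device, and the picture you substitute for it is wrong. You describe $[x_0,x_{j+1}]$ as the old configuration, then ``the old neutral tail $[c_k,x_j]$'', then ``a genuinely new relator'', i.e.\ the segment grows at its far end, and you treat serious backtracking as something to be excluded by contradiction. Backtracking is not contradictory; it is the main case. The segments $[x_0,x_{j+1}]$ and $[x_0,x_j]$ share only the prefix $[x_0,p]$, where $p$ is the branch point of the tripod on $x_0,x_j,x_{j+1}$, and $p$ may fall deep inside the old configuration (the neutral segment $[x_j,a_{j+1}]$ alone may retrace whole old relators, since neutrality only forbids relators of size $>\frac12\ell$). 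The paper's proof is organized around this tripod: Fact \ref{fact:c0 and bj+1} shows that $p$ lies more than $2B$ past $c_0$ and that $b_{j+1}$ lies more than $\frac13\ell$ past $p$; then three cases according to the position of $a_{j+1}$ relative to $p$ ($a_{j+1}$ on the $x_{j+1}$-side of $p$; at most $2B$ past $p$ towards $x_j$; more than $2B$ past $p$) produce the new configuration by truncating the old one at $p$ and appending $[p,b_{j+1}]$ (or $[a_{j+1},b_{j+1}]$) as the final relator. The thresholds $\ell-2B$, $\ell-3B$, $4B$, $3B$ are tuned exactly so that this truncation --- which costs at most $B$ on the relator containing $p$, by Fact \ref{fact:easy}\eqref{fact:easy2} --- preserves the rules; this is the content hidden in your ``one adjusts the breakpoints''. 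Once this is in place, the ``moreover'' clause you single out as the main obstacle is immediate: in both order-one cases the new $c_0$ equals $p$ or lies beyond it, so $[x_0,c_0]$ contains $[c_0^{\mathrm{old}},p]$, of diameter $>2B$; no global length accounting (and certainly not Lemma \ref{lem:distWhW}, which belongs to the windmill proof of \S\ref{sec:DGO}) is needed.

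There is also a logical gap in the one step you do make precise. You argue: a large overlap of $\axe(h_{j+1})$ with the axis of an earlier support $h_l$ forces, by tightness, $h_{j+1}=h_l^{\pm1}$, and then Lemma \ref{lem:mini} contradicts minimality. But Lemma \ref{lem:mini} applies only when $h_{j+1}=h_l^{-1}$; a minimal admissible presentation may perfectly well repeat a conjugate, $h_{j+1}=h_l$, and then the two axes coincide, so no bound on axis overlap can hold. What the induction actually needs is weaker: a bound on the intersection of old relators with the segment $[a_{j+1},b_{j+1}]$. In the repeated case this follows from the orientation convention for supports: the old relator lies in $[x_0,x_j]\cap\axe(h_{j+1})$, whose endpoint nearest $x_j$ is $a_{j+1}$, and since a support translates in the direction of its relator's orientation, $b_{j+1}$ lies on the opposite side of $a_{j+1}$; hence the old relator and $[a_{j+1},b_{j+1}]$ meet in at most the single point $a_{j+1}$. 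Without this observation (or some substitute) your inductive step fails precisely when the presentation repeats a conjugate.
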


\begin{proof}
We proceed by induction on $j$.
When $j = 1$, by setting $c_0 = a_1, c_1 = b_1$ we obtain a configuration of order 1. 

Assume now that $[x_0,x_{j}]$ admits a configuration $(c_i)_{-1\leq i\leq k+1}$ of order $k$. We want to construct a configuration of order $k'$ for $[x_0,x_{j+1}]$. We work inside the tripod $T \subset X$ defined by $x_0,x_j,x_{j+1}$. We denote by $p$ the branch point of $T$.

The admissible presentation of $h$ being minimal, Lemma \ref{lem:mini} implies that $h_{i_1} \neq h_{i_2}^{-1}$ for all $m \ge i_2 > i_1 \ge 1$. 
Consequently any relator $[c_i,c_{i+1}]$ is supported by some $h_{l_i}$ which is not $h_{j+1}^{-1}$, and $[c_i,c_{i+1}]\cap\axe(h_{j+1})$ has length at most $B$.
The following inequalities follow immediately from this and from the fact that $[a_{j+1},b_{j+1}]$ is a relator of size $\geq\l$ and $[x_j,a_{j+1}]$, $[b_{j+1},x_{j+1}]$ are neutral:

\begin{fact}\label{fact:easy}
Let $ 0 \le i \le k$.
\begin{itemize}
\item If $[c_i, c_{i+1}]$ is neutral then
\begin{equation} \label{fact:easy1}
\diam([a_{j+1},b_{j+1}] \cap [c_i,c_{i+1}]) \le \tfrac{1}{2}\ell;
\end{equation}
\item If $[c_i, c_{i+1}]$ is a relator then
\begin{align}
\diam([a_{j+1},b_{j+1}] \cap [c_i,c_{i+1}]) &\le B; \label{fact:easy2}  \\  
\diam([x_j,a_{j+1}] \cap [c_i,c_{i+1}]) &\leq  \tfrac{1}{2}\ell; \label{fact:easy3}\\
\diam([b_{j+1},x_{j+1}] \cap [c_i,c_{i+1}]) &\leq  \tfrac{1}{2}\ell. \label{fact:easy4}
\end{align}
\end{itemize}
\end{fact}

\begin{figure}[h]
\[
\begin{tikzpicture}[scale = 1.7,font=\small,thick]
\coordinate [label=above right:$p$] (p) at (0,0);
\coordinate [label=above:$x_j$] (xj) at (0:1);
\coordinate [label=above right:$x_0$] (x0) at (135:1.2);
\coordinate [label=above left:$x_{j+1}$] (xj+1) at (-135:1.2);
\coordinate [label=above right:$c_0$](c0) at ($ (x0)!1/2!(p) $) {};
\coordinate [label=above left:$b_{j+1}$](b) at ($ (p)!2/3!(xj+1) $) {};
\foreach \v in {p,xj,xj+1,x0,c0,b} {
\node[bullet] at (\v) {};
}
\draw (x0)--(c0) to["$>2B$"] (p)--(xj) (xj+1)--(b) to["$>\frac{1}{3}\ell$", swap] (p);
\end{tikzpicture}
\]
\caption{} \label{fig:c0 and bj+1}
\end{figure}

\begin{fact}[see Figure \ref{fig:c0 and bj+1}]\label{fact:c0 and bj+1}~
\begin{enumerate}
\item \label{fact:c0}
$c_0 \in [x_0, p[$, and $\diam [c_0,p] > 2B$;
\item \label{fact:bj+1}
$b_{j+1} \in [x_{j+1}, p[$, and $\diam [b_{j+1},p] >\frac{1}{3}\l$.
\end{enumerate}
\end{fact}

\begin{proof}
\begin{enumerate}
\item
The relator $[c_0, c_1]$ has size $\geq \l - 3B > 9B$, and its intersection with the relator $[a_{j+1}, b_{j+1}]$ of size $\l$ has diameter at most $B$.
So $[c_0, c_1]$ is not contained in $[x_j,x_{j+1}]$, it intersects at most one of the neutral segments $[x_{j+1}, b_{j+1}]$ and $[a_{j+1}, x_j]$ and we obtain that
\[\diam [c_0,p] \ge (\l - 3B) - B - \tfrac12 \l = \tfrac12 \l - 4B > 2B.\]
\item
Assume that $\diam [b_{j+1},a_{j+1}] \cap [p,x_j] > \frac12 \l$, otherwise the conclusion is direct.
By Fact \ref{fact:easy}\eqref{fact:easy1}, the previous point and the assumption that there are no consecutive neutral segments, $[b_{j+1},a_{j+1}]$ intersects at least one relator $[c_i,c_{i+1}]$.
Fact \ref{fact:easy}\eqref{fact:easy2} together with the assumption that each relator of the configuration has size $> B$, imply that $[c_i,c_{i+1}]$ is not contained in $[a_{j+1},b_{j+1}]$, so it must intersect one of the ends of $[b_{j+1},a_{j+1}] \cap [p,x_j]$.
So $[b_{j+1},a_{j+1}] \cap [p,x_j]$ intersects at most two relators and one neutral segment; we obtain that
\[
\diam [b_{j+1},p] > \l - 2B - \tfrac12 \l = \tfrac12 \l -2B > \tfrac12 \l - \tfrac16 \l = \tfrac13 \l. \qedhere
\]
\end{enumerate}
\end{proof}

In particular the tripod $T$ is not degenerate at $x_0$ or at $x_{j+1}$.
To conclude, we distinguish three cases according to the position of $a_{j+1}$ with respect to the branch point $p$.\\

\noindent\emph{First case}: $a_{j+1} \in [x_{j+1}, p]$, see Figure \ref{fig:2cas}(a).

We set $c_0 = a_{j+1}$ and $c_1 = b_{j+1}$, and we obtain a configuration of order 1 for $[x_0,x_{j+1}]$.
This case includes the degenerate situation where $x_j$ is the branch point of $T$.
So Lemma \ref{lem:infernal} is proved in this case, where Fact \ref{fact:c0 and bj+1} gives the second assertion of the lemma.

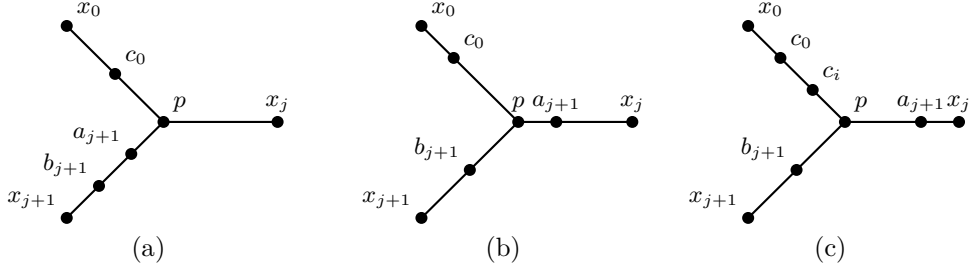
\begin{figure}[h]
\begin{align*}
\begin{array}{c}
\begin{tikzpicture}[scale = 1.5,font=\small,thick]
\coordinate [label=above right:$p$] (p) at (0,0);
\coordinate [label=above:$x_j$] (xj) at (0:1);
\coordinate [label=above right:$x_0$] (x0) at (135:1.2);
\coordinate [label=above left:$x_{j+1}$] (xj+1) at (-135:1.2);
\coordinate [label=above right:$c_0$](c0) at ($ (x0)!1/2!(p) $) {};
\coordinate [label=above left:$a_{j+1}$](a) at ($ (p)!1/3!(xj+1) $) {};
\coordinate [label=above left:$b_{j+1}$](b) at ($ (p)!2/3!(xj+1) $) {};
\foreach \v in {p,xj,xj+1,x0,c0,a,b} {
\node[bullet] at (\v) {};
}
\draw (x0)--(p)--(xj) (p)--(xj+1);
\end{tikzpicture}
\\
\text{(a)}
\end{array}
&&
\begin{array}{c}
\begin{tikzpicture}[scale = 1.5,font={\fontsize{9}{9}\selectfont},thick]
\coordinate [label=above:$p$] (p) at (0,0);
\coordinate [label=above:$x_j$] (xj) at (0:1);
\coordinate [label=above right:$x_0$] (x0) at (135:1.2);
\coordinate [label=above left:$x_{j+1}$] (xj+1) at (-135:1.2);
\coordinate [label=above right:$c_0$](c0) at ($ (x0)!1/3!(p) $) {};
\coordinate [label=above:$a_{j+1}$](a) at ($ (p)!1/3!(xj) $) {};
\coordinate [label=above left:$b_{j+1}$](b) at ($ (p)!1/2!(xj+1) $) {};
\foreach \v in {p,xj,xj+1,c0,x0,a,b} {
\node[bullet] at (\v) {};
}
\draw (x0)--(p)--(xj) (p)--(xj+1);
\end{tikzpicture}
\\
\text{(b)}
\end{array}
\begin{array}{c}
\begin{tikzpicture}[scale = 1.5,font={\fontsize{9}{9}\selectfont},thick]
\coordinate [label=above right:$p$] (p) at (0,0);
\coordinate [label=above:$x_j$] (xj) at (0:1);
\coordinate [label=above right:$x_0$] (x0) at (135:1.2);
\coordinate [label=above left:$x_{j+1}$] (xj+1) at (-135:1.2);
\coordinate [label=above right:$c_0$](c0) at ($ (x0)!1/3!(p) $) {};
\coordinate [label=above right:$c_i$](ci) at ($ (x0)!2/3!(p) $) {};
\coordinate [label=above:$a_{j+1}$](a) at ($ (p)!2/3!(xj) $) {};
\coordinate [label=above left:$b_{j+1}$](b) at ($ (p)!1/2!(xj+1) $) {};
\foreach \v in {p,xj,xj+1,c0,ci,x0,a,b} {
\node[bullet] at (\v) {};
}
\draw (x0)--(p)--(xj) (p)--(xj+1);
\end{tikzpicture}
\\ 
\text{(c)}
\end{array}
\end{align*}
\caption{The cases in the proof of Lemma \ref{lem:infernal}.}
\label{fig:2cas}
\end{figure}

\noindent\emph{Second case}: $a_{j+1} \in ]p,x_j]$, with $\dist(p, a_{j+1}) \le 2B$. See Figure \ref{fig:2cas}(b).

We set $c_0=p$ and $c_1=b_{j+1}$ and we obtain a configuration of order 1 for $[x_0,x_{j+1}]$, and again Fact \ref{fact:c0 and bj+1} achieves the proof of the lemma in this case.
\medskip

\noindent\emph{Third case}: $a_{j+1} \in ]p,x_j]$,  with $\dist(p, a_{j+1}) > 2B$. See Figure \ref{fig:2cas}(c).

There is a unique index $i \ge 0$ such that $p \in ]c_i,c_{i+1}]$.
Consider two subcases depending if $i=0$ or not.

First if $i=0$, then by Fact \ref{fact:easy}(\ref{fact:easy2}) the segment $[p,c_{1}]$ has length at most $B$, and $[c_1, c_2]$ is neutral (because the intersection $[c_1,c_2] \cap [b_{j+1}, a_{j+1}]$ has diameter $>B$), so $[c_0,p]$ is a relator of size $\geq \l-3B$.
Moreover by Fact \ref{fact:c0 and bj+1}\ref{fact:bj+1} the segment $[p,b_{j+1}]$ is a relator of size $> \frac{1}{3}\l>4B$.
We keep $c_0$ and set $c_1=p$ and $c_2=b_{j+1}$. This gives us a configuration of order $2$ for $[x_0,x_{j+1}]$.

If $i \geq 1$ we consider again two subcases according to the nature of the segment $[c_i,c_{i+1}]$:

If $[c_i,c_{i+1}]$ is neutral then $[c_i,p]$ also is.

If $[c_i,c_{i+1}]$ is a relator then by Fact \ref{fact:easy}(\ref{fact:easy2}) the segment $[p,c_{i+1}]$ has length at most $B$, and $[c_{i+1},c_{i+2}]$ is neutral because its intersection with $[b_{j+1}, a_{j+1}]$ has diameter larger than $B$. 
Thus the relator $[c_i,c_{i+1}]$ has size $>4B$, and $[c_i,p]$ is still a relator, of size $>3B$.

Moreover by Fact \ref{fact:c0 and bj+1}\ref{fact:bj+1} the segment $[p,b_{j+1}]$ is a relator of size $> \frac{1}{3}\l>4B$.
So in both subcases, by keeping the $c_j$ with $j \le i$, and setting $c_{i+1} = p$, $c_{i+2}= b_{j+1}$, we obtain a configuration of order $i+2$ for $[x_0,x_{j+1}]$.
\end{proof}

\begin{proof}[Proof of Theorem \ref{thm:SCT}\ref{SCT2}]Recall that $\dist(x_0,x_m) = \ell(h)$.
By Lemma \ref{lem:infernal} there exists $(c_i)$ a configuration of order $k$ for $[x_0,x_{m}]$.

If $k \ge 2$, we have at least two distincts relators: $[c_0,c_1]$ of size $\geq\l-3B$, and $[c_{k-1},c_k]$ of size $>3B$. 
We conclude that $\ell(h) > \ell$. 

If $k = 1$, either $h$ is conjugate to $g^n$, or by the second assertion of Lemma \ref{lem:infernal} we have $\dist(x_0,c_0)> 2B$. 
Moreover $\dist(c_0,c_1)\geq \l-2B$, so that again we get $\ell(h)> \ell$.
\end{proof}

\section{Polynomial automorphisms}

\subsection{The almagamated product structure}

Let $\K$ be any field.
We denote by $\Aut(\K^2)$ the group of polynomial automorphisms of the affine plane $\K^2$.
Let $A = \GL_2(\K) \ltimes \K^2$ be the subgroup of affine automorphisms, and $B$ the subgroup of elementary automorphisms: 
\[
B = \{(x,y) \mapsto (ax + P(y), by +c);\; a,b, c\in \K, ab \neq 0, P\in \K[y] \}.
\]

\begin{theorem}[Jung--van der Kulk, see for instance \cite{LJung}] 
The group $\Aut(\K^2)$ is the amalgamated product of its subgroups $A$ and $B$ along their  intersection.  
\end{theorem}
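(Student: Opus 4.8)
The plan is to prove the Jung--van der Kulk theorem by establishing the amalgamated product structure directly, following the classical approach via the elimination algorithm. The statement asserts that $\Aut(\K^2) = A \ast_{A \cap B} B$, so I must show two things: first, that $A$ and $B$ together generate $\Aut(\K^2)$ (surjectivity of the natural map from the amalgam), and second, that there are no unexpected relations, i.e. that every reduced word in the amalgam maps to a nontrivial automorphism (injectivity). The key quantitative invariant throughout will be the degree of an automorphism $f = (f_1, f_2)$, defined as $\deg f = \max(\deg f_1, \deg f_2)$.

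First I would prove generation. Given an arbitrary $f \in \Aut(\K^2)$ of degree $d > 1$, the strategy is to show that one can always reduce the degree by composing with a suitable elementary automorphism from $B$. The central computation is that for a polynomial automorphism, the top-degree parts of $f_1$ and $f_2$ must be algebraically dependent; concretely the highest homogeneous components are proportional powers of a common linear-ish form, which allows one to cancel the leading term of one coordinate by subtracting an appropriate polynomial in the other coordinate. Iterating this degree-reduction, after finitely many steps one reaches an automorphism of degree $1$, which lies in $A$. Unwinding shows $f$ is a product of elements of $A$ and $B$, so $A \cup B$ generates.

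For the amalgam structure I would invoke the normal form / ping-pong machinery. The cleanest route is to exhibit the action of $\Aut(\K^2)$ on its Bass--Serre tree and verify that $A$ and $B$ are the vertex stabilizers meeting in the edge stabilizer $A \cap B$, but the self-contained argument is to show that any alternating reduced product $s_k \cdots s_1$ with $s_i$ alternately in $A \setminus B$ and $B \setminus A$ acts nontrivially. Here the degree estimate does the work: composing with a genuine elementary factor of degree $\geq 2$ strictly increases degree (it cannot be undone by the subsequent affine factor), so a nontrivial reduced word cannot represent the identity. This is precisely the content needed for the universal property of the amalgamated product, and it gives injectivity of the map $A \ast_{A \cap B} B \to \Aut(\K^2)$.

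The main obstacle, and the genuinely nontrivial analytic input, is the degree-reduction step: proving that the leading forms of the two coordinates of an automorphism are forced into a dependence that permits cancellation. This rests on examining the Jacobian condition, since $f$ being an automorphism forces $\det \Jac(f) \in \K^\times$, and extracting from this constraint that the highest-degree parts cannot be algebraically independent (otherwise the Jacobian would have positive degree). Handling this uniformly over an arbitrary field $\K$, including positive characteristic where the naive Jacobian argument can degenerate, is the delicate point; one typically circumvents characteristic issues by arguing directly with the leading terms and the invertibility of $f$ rather than through derivatives. Since the statement is quoted with an external reference \cite{LJung}, in the paper I would simply cite that source for the full details and sketch only the degree-reduction heuristic, as the result is classical and not the focus of this note.
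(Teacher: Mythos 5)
The paper offers no proof of this statement: it is imported as a classical theorem, with the reference \cite{LJung} standing in for the argument, and everything that follows uses it as a black box to set up the Bass--Serre tree. Your closing decision to cite the source and only sketch the mechanism therefore matches the paper's treatment exactly; the substantive comparison is between your sketch and the proof in the cited reference. Your outline is the classical algebraic route (Jung in characteristic zero, van der Kulk in general): degree reduction by elementary automorphisms gives generation, and multiplicativity of degree along reduced alternating words gives injectivity of $A \ast_{A \cap B} B \to \Aut(\K^2)$. For the latter, the precise lemma you need is that a reduced word $a_k b_k \cdots b_1 a_0$, with each $b_i \in B \setminus A$ and each intermediate $a_i \in A \setminus B$, has degree $\prod_i \deg b_i \geq 2^k$, proved by an induction tracking that leading forms survive composition with the affine letters; your phrase ``cannot be undone by the subsequent affine factor'' is exactly what that induction verifies. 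By contrast, \cite{LJung} is a geometric proof, resolving the birational maps of $\P^2$ induced by an automorphism via blow-ups at infinity, which avoids leading-form combinatorics entirely and fits the birational-geometry perspective motivating this note. Your approach buys elementary self-containedness, but its genuinely delicate point is the one you flag yourself: the algebraic dependence of the two leading forms cannot be extracted from $\det \Jac(f) \in \K^\times$ in positive characteristic and must be derived directly from invertibility --- this is precisely the gap between Jung's original argument and van der Kulk's extension, so a complete write-up must either do that work or, as you and the paper both elect, cite it.
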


The group $\Aut(\K^2)$ acts on its associated Bass-Serre $T$.
Vertices of $T$ are the left cosets $f A$ and $f B$, $f \in \Aut(\K^2)$, and edges are left cosets $f (A \cap B)$.
Observe that this tree is not locally finite, even when working over a finite field.
Indeed the edges issued from the vertex $\id B$ are parametrized by the left cosets $B / A$, which we can represent by the automorphisms $(x+y^2P(y),y)$, $P \in \K[y]$.

The group $\Aut(\K^2)$ always contains non-abelian free groups:

\begin{lemma}
Let $b : (x,y) \mapsto (-x + y^2, y)$, 
 $a_\lambda \colon (x,y) \mapsto (\lambda x + y,x)$ where $\lambda \in \K$, $a_\infty = \id$, and set $g_\lambda = a_\lambda b a_\lambda^{-1}$.
Then the subgroup generated by the involutions $g_\lambda$ is a free product of $\Z/2$ parametrized by  $\P^1_\K = \K \cup \{\infty\}$ $($which has cardinal at least $3)$, in particular it contains a copy of the free group $\Z * \Z$.
\end{lemma}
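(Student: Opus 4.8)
The plan is to run a ping-pong argument for free products using the action of $\Aut(\K^2)$ on the Bass--Serre tree $T$. First I would record the easy facts. Since $b\in B$ (its linear part is $\left(\begin{smallmatrix}-1&0\\0&1\end{smallmatrix}\right)$ and the defect $y^2$ is a polynomial in $y$) and $b^2=\id$, each $g_\lambda=a_\lambda b a_\lambda^{-1}$ is an involution, so $\langle g_\lambda\rangle\cong\Z/2$. Moreover $a_\lambda\in A=\GL_2(\K)\ltimes\K^2$ for every $\lambda\in\P^1_\K$ (with $a_\infty=\id$), so $a_\lambda$ fixes $w=\id A$. Setting $v_\lambda=a_\lambda B$, the equality $w=a_\lambda A$ shows that $v_\lambda$ is a neighbour of $w$, and $g_\lambda$ fixes $v_\lambda$ because $g_\lambda(a_\lambda B)=a_\lambda bB=a_\lambda B$.

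Second, I would check that the $v_\lambda$ are pairwise distinct. Two of them coincide iff $a_\lambda^{-1}a_\mu\in A\cap B$, and a direct matrix computation gives that the linear part of $a_\lambda^{-1}a_\mu$ is the lower-triangular matrix $\left(\begin{smallmatrix}1&0\\ \mu-\lambda&1\end{smallmatrix}\right)$, whereas $A\cap B$ consists of the affine maps with \emph{upper}-triangular linear part. Hence $v_\lambda=v_\mu$ forces $\lambda=\mu$ (the case $\lambda=\infty$ is identical, since $a_\mu\notin A\cap B$). Thus $\{v_\lambda\}_{\lambda\in\P^1_\K}$ is a family of distinct neighbours of $w$, and removing $w$ from $T$ yields pairwise disjoint branches $T_\lambda$, where $T_\lambda$ is the component of $T\setminus\{w\}$ containing $v_\lambda$.

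The geometric heart of the argument, and the step I expect to be the main obstacle, is to show $g_\lambda(T\setminus T_\lambda)\subset T_\lambda$. The key point is that $g_\lambda(w)=a_\lambda bA\neq w$, which holds precisely because $b\notin A$ (it involves the degree-two polynomial $y^2$, hence is not affine). Since $g_\lambda$ fixes $v_\lambda$ and $w$ is a neighbour of $v_\lambda$, the image $g_\lambda(w)$ is a neighbour of $v_\lambda$ distinct from $w$, hence lies in $T_\lambda$. For any vertex $u\notin T_\lambda$ the geodesic $[u,v_\lambda]$ passes through $w$, so its image $[g_\lambda u,v_\lambda]$ reaches $v_\lambda$ through the edge $g_\lambda([w,v_\lambda])\neq[w,v_\lambda]$; as there is then no backtracking, the geodesic $[g_\lambda u,w]$ is forced to cross $v_\lambda$ along the edge $[v_\lambda,w]$, which means exactly that $g_\lambda u\in T_\lambda$.

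Finally I would invoke the ping-pong lemma for free products with the disjoint sets $\{T_\lambda\}_{\lambda\in\P^1_\K}$: for $\mu\neq\lambda$ we have $T_\mu\subset T\setminus T_\lambda$, hence $g_\lambda(T_\mu)\subset T_\lambda$, and since $|\P^1_\K|\geq 3$ the hypotheses are met. This gives $\langle g_\lambda\mid\lambda\in\P^1_\K\rangle=\bigast_{\lambda\in\P^1_\K}\Z/2$. To conclude that it contains $\Z*\Z$, I would use the classical fact that a free product of at least three copies of $\Z/2$ has a nonabelian free subgroup: the kernel of the homomorphism onto $\Z/2$ sending every $g_\lambda$ to the generator is torsion-free by the Kurosh subgroup theorem, of index two, and an Euler-characteristic count identifies it with $\Z*\Z$.
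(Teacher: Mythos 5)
Your proof is correct and follows essentially the same route as the paper: your branches $T_\lambda$ coincide with the paper's ping-pong sets $U_\lambda$ (the points whose projection on $[\id A, a_\lambda B]$ is $a_\lambda B$), and the core inclusion $g_\lambda(T\setminus T_\lambda)\subset T_\lambda$ together with the cardinality-at-least-3 trick is exactly the paper's argument. You simply supply some details the paper leaves implicit (distinctness of the vertices $a_\lambda B$, and the passage from the free product of $\Z/2$'s to a copy of $\Z*\Z$).
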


\begin{proof}
By construction $g_\lambda$ fixes the vertex $a_\lambda B$ but not the vertex $\id A$.
Denote $U_\lambda \subset T$ the subtree of points whose projection on the segment $[\id A, a_\lambda B]$ is equal to $a_\lambda B$.
Then for every $\lambda' \neq \lambda$ we have  $g_\lambda U_{\lambda'} \subset U_\lambda$.
Given a reduced word $g_{\lambda_n} \dots g_{\lambda_1}$, choose $\lambda$ distinct from both $\lambda_1$ and $\lambda_n$, and observe that $g_{\lambda_n} \dots g_{\lambda_1} U_{\lambda} \subset U_{\lambda_n}$ so that this word is not trivial.
\end{proof}

\begin{remark}
The argument in the proof is standard and often called the ``ping-pong lemma'', see for instance \cite[II.B.24]{delaHarpe}.
Observe also that the fact that the index set has cardinality at least 3 is important when dealing with involutions.
\end{remark}
 
\subsection{WPD elements}
In \cite{Lamy}, the elliptic elements in $\Aut(\C^2)$ where classified, with in particular the following characterization of elliptic elements admitting a fixed subtree of large diameter:

\begin{proposition}[{ \cite[Proposition 3.3]{Lamy}}]\label{prop_lamy_stab}
With respect to the action of $\Aut(\C^2)$ on its Bass--Serre tree, the stabilizer of any path of length at least 7 is finite,  and more precisely is conjugate to a cyclic group of maps of the form $(x,y) \mapsto (\alpha x, \beta y)$, with $\alpha, \beta$ primitive roots of the same order.
\end{proposition}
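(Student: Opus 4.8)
The plan is to exploit the amalgamated product decomposition $\Aut(\K^2) = A \ast_{A \cap B} B$ (here with $\K = \C$) together with the standard dictionary between the Bass--Serre tree $T$ and this decomposition, and then to compute the stabilizer of a geodesic one vertex at a time. The three stabilizers I would record are: that of an $A$-vertex, a conjugate of the affine group $A$; that of a $B$-vertex, a conjugate of the elementary group $B$; and that of an edge, a conjugate of $C := A \cap B$, the group of triangular affine maps $(x,y) \mapsto (ax + \beta y + \gamma,\, by + c)$ with $a,b \in \C^*$. Since the stabilizer of a path is the intersection of the stabilizers of its vertices, I would first conjugate the path into standard position, using transitivity of the action on oriented edges so that one of its central edges joins $\id A$ to $\id B$; then the stabilizer of the path is automatically contained in $C$, and it remains to see how the remaining six edges cut $C$ down to a finite diagonal group.

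The heart of the argument is the incremental computation. To extend the path across $\id B$ to one of its other neighbours $bA$, one may normalize $b \in B \setminus C$ to $b\colon (x,y)\mapsto (x + P(y), y)$ with $P$ nonlinear, and then an element $h = (ax+\beta y+\gamma,\, by+c) \in C$ fixes $bA$ if and only if $b^{-1}hb \in A$. A direct computation of $b^{-1}hb$ shows that affineness forces the monomial relations $a = b^{\,k}$ for every $k \in \supp(P)$, so each nonlinear step imposes a relation between the two eigenvalues of the linear part of $h$. By contrast a map scaling a single coordinate, $(x,y)\mapsto(\alpha x,y)$ with $\alpha \neq 1$, fixes \emph{no} such nonlinear neighbour, so such maps have a very small fixed subtree. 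Crossing an $A$-vertex, any genuine turn uses a linear part outside the triangular group, and in the simplest case a coordinate swap $(x,y)\mapsto(y,x)$, which exchanges the two eigenvalues; interleaving swaps with nonlinear steps therefore produces both a relation $a = b^{k}$ and a transposed one $b = a^{k'}$. Combining these gives $a = a^{kk'}$, hence $a$ (and likewise $b$) is a root of unity, and tracking the relations precisely shows that the surviving diagonal map has the form $(x,y)\mapsto(\alpha x,\beta y)$ with $\alpha,\beta$ primitive roots of the \emph{same} order.

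Concretely I would organize the proof as an induction on the length of the path, bookkeeping which parameters of $h$ survive. The order of elimination is the subtle point: a shear $\beta$ or a translation $\gamma,c$ typically survives one step but is detected two or three steps further along (for instance a $y$-translation is unaffected by a nonlinear $x$-step yet fails to remain elementary after the following swap step), so the analysis cannot be purely local. The main obstacle is exactly this combined with uniformity: the conclusion must hold for \emph{every} geodesic of length $7$, not for a conveniently chosen one, so I would have to show that no length-$7$ geodesic can avoid supplying simultaneously the steps needed to annihilate the shear, to annihilate the two translation parameters, and to impose two eigenvalue relations of opposite type. It is this counting over the possible local shapes of a geodesic that both pins the threshold at $7$ and forces the stabilizer to reduce to a finite cyclic group conjugate to $\langle (x,y)\mapsto(\alpha x,\beta y)\rangle$, completing the proof.
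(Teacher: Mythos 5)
You should first note that the paper does not prove this proposition at all: it is imported from \cite[Proposition 3.3]{Lamy}, and the only computation the paper offers is the special case in Proposition \ref{pro:auto WPD}, which the authors describe as giving ``the flavour of the full proof''. Your strategy --- normalize one edge of the path to the edge joining $\id A$ to $\id B$, deduce that the pointwise stabilizer lies in $C = A \cap B$, then cross the remaining vertices one at a time and record the algebraic conditions each crossing imposes on a map $(ax+\beta y+\gamma,\, by+c)$ --- is exactly the method of that special case and of Lamy's original argument, so the approach is the right one.

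The gap is that the decisive part of the argument is announced rather than carried out. Everything that is actually delicate in the statement --- the threshold $7$, the cyclicity of the stabilizer, and the fact that $\alpha$ and $\beta$ are primitive roots of the \emph{same} order --- is relegated to the ``counting over the possible local shapes of a geodesic'' in your last paragraph, which you explicitly leave undone; that counting \emph{is} the proof, not an obstacle on the way to it. Two of your intermediate claims would also need repair when you do carry it out. First, with the $B$-turn normalized to $b\colon (x,y)\mapsto (x+P(y),y)$, the condition $b^{-1}hb \in A$ says that $aP(y)-P(by+c)$ is affine in $y$; this yields $a=b^{\deg P}$ from the top coefficient, but the lower coefficients mix $a$, $b$ and $c$ (for $P(y)=y^3+y^2$ one gets $a=b^3$ together with $a=b^2+3b^2c$), so the clean relations $a=b^k$ for all $k\in\supp(P)$ only hold once the translation parameters have already been eliminated --- precisely the ordering problem you flag without resolving. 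Second, reducing every turn at an $A$-vertex to the coordinate swap rests on the affine Bruhat decomposition $A = C \sqcup C t C$, but the normalizing elements of $C$ this produces at different vertices cannot be chosen simultaneously once earlier edges of the path have been put in standard form; handling this incompatibility uniformly over all shapes of length-$7$ paths is exactly the bookkeeping that constitutes Lamy's proof, and nothing in your proposal substitutes for it.
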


 In fact the same proof would apply to any field $\K$ of characteristic zero.
From this we can deduce that over a field of characteristic zero, any loxodromic element in $\Aut(\K^2)$ is WPD.
In this note we will content ourselves by giving a simple proof of the existence of WPD elements, that is valable in any characteristic (up to a twist in characteristic 2). In fact, computations of Proposition \ref{pro:auto WPD} are particular cases of the ones used in the proof of Proposition \ref{prop_lamy_stab}, and so give the flavour of the full proof. 

We will work with the following involutions 
\begin{align*}
b = (-x+y^2, y) \in B \smallsetminus A, 
&&
t = a_0 = (y,x) \in A \smallsetminus B.
\end{align*}

\begin{proposition}[Compare with {\cite[Lemma 4.23]{MO}}]
\label{pro:auto WPD}
Assume $\car \K \neq 2$.
Then $bt = (x^2-y, x) \in \Aut(\K^2)$ is loxodromic and satisfies the WPD property.
\end{proposition}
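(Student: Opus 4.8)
The plan is to show that $bt$ is loxodromic with an explicit axis, then exhibit two vertices on that axis whose common stabilizer $\Fix\{u,v\}$ is finite, and invoke Lemma~\ref{lem:cor4.2} to conclude the WPD property. This reduces the problem to a concrete computation on the Bass--Serre tree of $\Aut(\K^2)$.

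\emph{Loxodromy and the axis.} First I would check that $w = bt = (x^2 - y, x)$ is loxodromic. Since $b \in B \smallsetminus A$ and $t \in A \smallsetminus B$, the product $bt$ is a reduced word of even length in the amalgam $\Aut(\K^2) = A \ast_{A\cap B} B$, hence acts as a loxodromic (hyperbolic) isometry of the Bass--Serre tree $T$ with translation length $2$ (it moves $\id A$ to $bt\, A$ across two edges). Concretely the vertices $\id A$, $b\, B = \id B$, and $bt\, A = w\, A$ lie on a geodesic, and the axis $\axe(w)$ is the bi-infinite line $\bigcup_{k\in\Z} w^k\,[\id A, \id B]$.

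\emph{Finiteness of the pointwise stabilizer.} Next I would pick two vertices $u, v \in \axe(w)$ a bounded distance apart --- say $u = \id A$ and $v = \id B$, or if needed $u = \id A$ and $v = w\,A$ to force the fixed segment to straddle both types of vertex --- and compute $\Fix\{u,v\}$, the subgroup fixing both. An element fixing $\id A$ lies in $A$ and one fixing $\id B$ lies in $B$, so $\Fix\{\id A, \id B\} = A \cap B$, which is \emph{not} finite (it is a Borel-type subgroup). The real content is therefore to fix a \emph{longer} segment of the axis: fixing $\id A$, $\id B$ and $w\,A = bt\,A$ simultaneously. An element $\phi$ fixing these three vertices must satisfy $\phi \in A\cap B$ and $t^{-1}b^{-1}\phi\, bt \in A \cap B$ as well, i.e. $\phi$ lies in $(A\cap B)\cap (bt)(A\cap B)(bt)^{-1}$. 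I would then translate this into explicit equations: writing a general element of $A\cap B$ as $(x,y)\mapsto(ax + cy + e,\ by + f)$ with $ab\ne 0$ and unpacking the conjugation by $bt$, the condition that both $\phi$ and its $bt$-conjugate stay triangular forces a system of polynomial constraints on $a,b,c,e,f$ whose only solutions form a finite group (generically just $\{\id\}$, or a small cyclic group after accounting for the quadratic in $b$).

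\emph{Main obstacle and conclusion.} The hard part will be the explicit conjugation computation showing that demanding $\phi$ fix a segment of length $\geq 3$ (rather than the single edge $\id A$--$\id B$) collapses the infinite group $A\cap B$ down to a finite one; this is exactly the phenomenon behind Proposition~\ref{prop_lamy_stab}, and the characteristic~$\ne 2$ hypothesis should enter precisely here (the coefficient $2$ appears when differentiating or matching the $y^2$ term of $b$, and in characteristic~$2$ the involution $b$ degenerates, which is why the statement flags ``up to a twist in characteristic~$2$''). Once I have produced two points $u,v\in\axe(w)$ with $\Fix\{u,v\}$ finite --- taking $u,v$ far enough apart along the axis that the intervening segment has length at least the bound forcing triangular overlap --- Lemma~\ref{lem:cor4.2} applies verbatim and yields that $bt$ is a WPD element, completing the proof.
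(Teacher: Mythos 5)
Your overall strategy is exactly the paper's: exhibit two vertices $u,v$ on $\axe(bt)$ with $\Fix\{u,v\}$ finite and invoke Lemma~\ref{lem:cor4.2}, after checking loxodromy via the amalgam structure. The loxodromy part is fine (modulo a small slip: since $w=bt$ has translation length $2$, the union $\bigcup_k w^k[\id A,\id B]$ is a disconnected set of edges; the axis is $\bigcup_k w^k([\id A,\id B]\cup[\id B,bA])$). But the concrete finiteness claim at the heart of your proof is false. Since $t\in A$ you have $wA=btA=bA$, so your three vertices $\id A$, $\id B$, $wA$ span a path of length $2$ only, and adding $wB=btB$ gives the length-$3$ path $\id A - \id B - bA - btB$. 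Writing $\phi=(\alpha x+\beta y+\gamma,\ \delta y+\eps)\in A\cap B$ and computing
\[
b\phi b^{-1}=\bigl(\alpha x+(\delta^2-\alpha)y^2+(2\delta\eps-\beta)y+\eps^2-\gamma,\ \delta y+\eps\bigr),
\]
the condition $\phi\in(A\cap B)\cap w(A\cap B)w^{-1}$ amounts to exactly two equations, $\delta^2=\alpha$ and $\beta=2\delta\eps$. The solutions
\[
\phi=(\delta^2 x+2\delta\eps y+\gamma,\ \delta y+\eps),\qquad \delta\in\K^*,\ \gamma,\eps\in\K,
\]
form a three-parameter group, infinite whenever $\K$ is infinite; in particular every translation $(x+\gamma,y)$ fixes your segment pointwise. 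So your system of polynomial constraints does not collapse to a finite group, ``generically'' or otherwise.

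What is missing is the use of vertices on the \emph{other} side of the edge $[\id A,\id B]$, whose stabilizer conditions involve conjugation by $t$ and produce the mirror-image equations. The paper fixes the length-$6$ path $tbtB - tbA - tB - \id A - \id B - bA - btB$: fixing $tB$ forces $\beta=0$; fixing $bA$ and $btB$ then forces $\delta^2=\alpha$ and $2\delta\eps=0$, hence $\eps=0$ (this is one of the two places where $\car\K\neq 2$ is used); the symmetric constraints from $tbA$ and $tbtB$ force $\alpha^2=\delta$ and $2\alpha\gamma=0$, hence $\gamma=0$. Only then does the stabilizer collapse to the diagonal maps $(\alpha x,\delta y)$ with $\alpha^3=\delta^3=1$, a group of order at most $3$. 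This cannot be shortcut much: even the length-$5$ path from $tbA$ to $btB$ has infinite pointwise stabilizer, namely $\{(\alpha x+\gamma,\alpha^2 y)\mid \alpha^3=1,\ \gamma\in\K\}$. So your closing hedge about taking $u,v$ ``far enough apart'' is not a safety margin but the entire content of the proof, and the mechanism that makes it work --- alternating constraints coming from both $b$-type and $t$-type vertices on both sides of the base edge --- is absent from your argument.
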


\begin{proof}
We claim that there are only finitely many automorphisms fixing pointwise the following path of length 6 inside the axis of $b t$.

\[
\begin{tikzpicture}[node distance=1.3cm,font=\small,thick]
\coordinate[label=above:$t bt B$] (tbtB) at (0,0);
\coordinate [right=of tbtB, label=above:$t b A$] (tbA);
\coordinate [right=of tbA, label=above:$t B$] (tB);
\coordinate [right=of tB, label=above:$\id A$] (A);
\coordinate [right=of A, label=above:$\id B$] (B);
\coordinate [right=of B, label=above:$bA$] (bA);
\coordinate [right=of bA, label=above:$bt B$] (btB);
\coordinate [left=of tbtB] (tail);
\coordinate [left=of tail] (far tail);
\coordinate [right=of btB] (head);
\coordinate [right=of head, label=above:$\axe(bt)$] (far head);
\draw (tail)--(head);
\draw[dashed] (far tail)--(tail);
\draw[dashed,->] (head)--(far head);
\foreach \v in {tbtB,tbA,tB,A,B,bA,btB} {
  \node[bullet] at (\v) {};
}
\end{tikzpicture}
\]

By definition of the Bass-Serre tree, any $f$ stabilizing the edge between $\id A$ and $\id B$ is an element of $A \cap B$, hence has the form
$$f = (\alpha x + \beta y + \gamma, \delta y + \eps),$$
where $\alpha\delta \neq 0$ since $f$ is invertible.
Now such an $f$ fixes the vertex $t B$ if and only if 
\[t f t^{-1} = (\delta x + \eps, \beta x + \alpha y + \gamma) \in B.\]
So $f$ fixes $t B$ if and only if $\beta = 0$, which we now assume.

Similarly, $f$ fixes the vertices $bA$ and $bt B$ if and only if $b f b^{-1}$ is an element in $A$ of the form $(\alpha'x + \gamma', \delta'y +\eps')$.
We have:
\begin{align*}
b f b^{-1} &= (-x+y^2, y) \circ (\alpha x + \gamma, \delta y + \eps) \circ (-x+y^2,y) \\
&=  (-x+y^2, y) \circ (-\alpha x + \alpha y^2 + \gamma, \delta y + \eps) \\
&= (\alpha x - \alpha y^2 - \gamma + (\delta y + \eps)^2, \delta y + \eps).
\end{align*}
So $f$ fixing the vertices $bA$ and $bt B$ implies $\delta^2 = \alpha$ and $2\delta\eps = 0$.
Since $\car \K \neq 2$, this gives $\eps = 0$.

By symmetry of the argument, the only automorphisms that fix the path of length $6$ from $t b t B$ to $t b A$ are the $(\alpha x, \delta y)$ with $\alpha^2 = \delta$ and $\delta^2 = \alpha$, which implies that $\alpha, \delta$ are cubic roots of the unity. 
Finally the pointwise stabilizer of this path is a finite group of order at most $3$, and we can apply Lemma \ref{lem:cor4.2}.
\end{proof}

\begin{remark}
When $\car \K = 2$, we can work with the elementary involution  $b = (x+y^3, y)$, and show that  $bt = (x^3+y,x) \in \Aut(\K^2)$ is a loxodromic WPD isometry, with essentially the same proof.
\end{remark}

\begin{remark}
Over an infinite field of characteristic $p >0$, the loxodromic map $g = (x^p-y,x)$ is not
WPD.
Indeed, the group of translations $T = \{(x + a, y + b) \mid a,b \in \K\}$ is normalized by $g$:
\[g \circ (x+a, y+ b) \circ g^{-1} = (x+a^p-b, y + a) \in T.\]
So for any $t \in T$, and for any $n \in \Z$, there exists $t_n \in T$ such that $g^n \circ t_n = t \circ g^n$.
The vertex $\id A$ is in the axis of $g$, and is fixed by $T$, so we get 
\[
g^n \id A = g^n t_n A = t g^n \id A.
\] 
In consequence the infinite group $T$ fixes pointwise the axis of $g$.
\end{remark}

\begin{definition}[Acylindricity, {\cite[Definition 5.30]{DGO}}]
Let $G \action X$ be a group acting on a metric space.
We say that the action of $G$ is \textit{acylindrical} if for all $d$ there
exist $R_d > 0$, $N_d > 0$ such that for all $x,y \in X$ with $\dist(x,y) > R_d$
the set
\[
\{ g \in G \mid \dist(x,gx) \le d, \dist(y,gy) \le d \}
\]
contains at most $N_d$ elements.
\end{definition}

\begin{proposition}
Let $\K$ be a field of characteristic zero.
Then the action of $\Aut(\K^2)$ on its Bass--Serre tree is acylindrical if and only if $\K$ contains only finitely many roots of the unity.
\end{proposition}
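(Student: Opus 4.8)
The plan is to prove both implications, the common engine being Proposition~\ref{prop_lamy_stab} (valid over any field of characteristic zero): the pointwise stabilizer of a segment of length $\ge 7$ is finite and, being cyclic and generated by a diagonal map $(\alpha x,\beta y)$ with $\alpha,\beta$ roots of unity, has cardinality at most $N_0:=|\mu(\K)|$, the number of roots of unity of $\K$. The geometric reduction underlying everything is the tree dictionary for the displacement function: for an isometry $g$ one has $\dist(z,gz)=2\,\dist(z,\Fix(g))$ if $g$ is elliptic and $\dist(z,gz)=\ell(g)+2\,\dist(z,\axe(g))$ if $g$ is loxodromic, this function being convex along geodesics. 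I abbreviate by $\Fix_d\{x,y\}=\{g\mid \dist(x,gx)\le d,\ \dist(y,gy)\le d\}$ the set appearing in the definition of acylindricity.

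For the forward direction ($\mu(\K)$ finite $\Rightarrow$ acylindrical), given $d$ I would set $R_d=2d+7$ and bound $|\Fix_d\{x,y\}|$ when $\dist(x,y)>R_d$. Let $\sigma_0\subset[x,y]$ be the central subsegment obtained by trimming $d/2$ from each end, of length $>R_d-d\ge 7$. For $g\in\Fix_d\{x,y\}$ the displacement dictionary forces $\dist(x,\Min(g)),\dist(y,\Min(g))\le d/2$; since $\dist(x,y)>d$ the projections of $x,y$ onto the convex set $\Min(g)$ are distinct and bound a subsegment of $[x,y]$ containing $\sigma_0$, so $\sigma_0\subset\Min(g)$. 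If $g$ is elliptic this gives $\sigma_0\subset\Fix(g)$, so all elliptic elements pointwise-stabilize $\sigma_0$ and by Proposition~\ref{prop_lamy_stab} there are at most $N_0$ of them. If $g$ is loxodromic then $\sigma_0\subset\axe(g)$ and $\ell(g)\le d$, so $g$ translates the oriented $\sigma_0$ by some $\ell\in\{1,\dots,d\}$ in one of two directions; two such elements sharing this datum differ by an element fixing the initial sub-segment of $\sigma_0$ of length $\ge 7$, so each of the at most $2d$ classes lies in one coset of a length-$\ge 7$ pointwise stabilizer and has $\le N_0$ elements. Thus $|\Fix_d\{x,y\}|\le (2d+1)N_0$.

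For the converse ($\mu(\K)$ infinite $\Rightarrow$ not acylindrical), an infinite group of roots of unity has elements of arbitrarily large order, so for each large $n$ I pick $\zeta\in\K$ of order $n$ and set $h=(\zeta x,\zeta y)$. The crux is to exhibit an arbitrarily long segment fixed pointwise by $\langle h\rangle$. I would check by direct computation that $h$ commutes with $t=(y,x)$ and with $u=(x+y^{n+1},y)$, the latter precisely because $\zeta^{n}=1$; hence $h$ commutes with the reduced length-two word $ut=(y+x^{n+1},x)$, which is loxodromic of translation length $2$. A finite-order isometry commuting with a loxodromic one fixes its axis pointwise (on the axis it acts as a finite-order translation of a line, hence trivially), so $\axe(ut)\subset\Fix(h)$ is an entire line. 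Taking $d=0$ and choosing $x,y$ on this line with $\dist(x,y)$ as large as we like, the set $\Fix_0\{x,y\}$ contains the $n$ distinct powers of $h$; since $n$ exceeds any prescribed $N$ and $\dist(x,y)$ any prescribed $R$, acylindricity fails.

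The main obstacle is the loxodromic case of the forward direction: unlike an elliptic element, a loxodromic $g\in\Fix_d\{x,y\}$ fixes no long segment, so Proposition~\ref{prop_lamy_stab} does not apply to it directly. The resolution is the observation that its translation length is bounded by $d$ and its axis contains $\sigma_0$, which sorts these elements into finitely many translation classes, each absorbed into a coset of a genuine long-segment stabilizer. In the converse the decisive idea is instead the commutation trick producing an \emph{infinite} fixed line, which is exactly what upgrades the finite stabilizers of Proposition~\ref{prop_lamy_stab} into a failure of acylindricity at every scale.
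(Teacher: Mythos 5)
Your proposal is correct and takes essentially the same route as the paper: the non-acylindricity direction uses the identical commuting-pair trick (a diagonal map of large finite order fixing pointwise the axis of $(x^{n+1}\pm y,\,x)$), and the acylindricity direction rests on Proposition \ref{prop_lamy_stab} exactly as the paper intends when it writes that this implication ``follows directly''. Your extra work sorting the loxodromic elements of the set $\{f \mid \dist(x,fx)\le d,\ \dist(y,fy)\le d\}$ into finitely many translation-length-and-direction classes, each contained in a single coset of the pointwise stabilizer of a length-$\ge 7$ segment, is precisely the detail the paper leaves implicit, and it is carried out correctly.
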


\begin{proof}
Let $\alpha$ be a primitive $n$th root of unity, and set $g(x,y) = (x^{n+1} -y,x)$, $f(x,y) = (\alpha x, \alpha y)$.
Then $f$ and $g$ commute, so the elliptic isometry $f$ fixes pointwise the axis of $g$.
So if $\K$ contains primitive $n$th roots of the unity for arbitrary large $n$, we obtain vertices at an arbitrary large distance with stabilizer of cardinal at least $n$.  

The converse statement follows directly from Proposition \ref{prop_lamy_stab}.
\end{proof}

\begin{example}[WPD does not imply tight]
\label{ex:WPDnotTight}
Let $\K$ be a field of characteristic $\neq 2$ and containing $j$, a primitive third root
of unity. 
Let $g = (x^2-y,  x)$, and $f = (jx,j^2y)$.
We have
\[
g \circ f = (j^2(x^2-y), j x) = f^2 \circ g.
\]
It follows that $f$ commutes with $g^2$ but not with $g$.
In particular $\axe(fgf^{-1}) = \axe(g)$ but $fgf^{-1}$ is not equal to $g$ or
$g^{-1}$.
Hence $g$ is not tight, but $g$ is WPD by Proposition \ref{pro:auto WPD}.
\end{example}

\begin{example}[tight does not imply WPD]
\label{ex:TightNotWPD}
Let $g \in G$ be a tight loxodromic element for an action $G \action
X$.
Then extend the action to $G \times \Z$, by letting the $\Z$ factor act
trivially.
Then $g$ is still tight with respect to this action, but it cannot be WPD
because the centralizer of $g$ contains the $\Z^2$ generated by $(g,0)$ and
$(1_G,1)$.
\end{example}

\bibliographystyle{myalpha}
\bibliography{biblio}

\end{document}